



\documentclass[reqno]{amsart} 




\usepackage[utf8]{inputenc} 


\usepackage[foot]{amsaddr} 


\usepackage{amsthm}
\usepackage{amsmath}  
\usepackage{amssymb} 
\usepackage{amsfonts}
\usepackage{latexsym}
\usepackage{mathtools} 

\usepackage[english]{babel} 


\usepackage{cite}


%




\usepackage[utf8]{inputenc} 
\usepackage{textcomp} 

\usepackage{graphicx}  


\usepackage{flafter}  

\usepackage{bm}  

\usepackage[usenames,dvipsnames]{xcolor} 
    \usepackage[backref=none]{hyperref} 

\usepackage{esint}

\usepackage{calrsfs}




\theoremstyle{plain}
\newtheorem{theorem}{Theorem}
\newtheorem{lemma}[theorem]{Lemma}

\newtheorem{proposition}[theorem]{Proposition}

\numberwithin{equation}{section}
\numberwithin{theorem}{section}





 \usepackage{todonotes}
\newcommand{\eqdef }{\overset{\mbox{\tiny{def}}}{=}}
\newcommand{\rth}{{\mathbb{R}^3}}
\newcommand{\rfo}{{\mathbb{R}^4}}

\newcommand{\pv}{p}
\newcommand{\pZ}{\pv^0}
\newcommand{\pZp }{\pv'^{0}}
\newcommand{\qv}{q}
\newcommand{\qZ}{\qv^0}
\newcommand{\qZp }{\qv'^{0}}
\newcommand{\aZ}{a^0}

\newcommand{\Temp}{\vartheta}
\newcommand{\exm}{\rho}

\newcommand{\mollrV}{v_{\phi}}






\title[Propagation of uniform upper bounds for relativistic Boltzmann]{Propagation of uniform upper bounds for the spatially homogeneous relativistic Boltzmann equation}

%
%

\author[J. W. Jang]{Jin Woo Jang$^*$}
\address{$^*$Center for Geometry and Physics, Institute for Basic Science (IBS), Pohang 37673, Republic of Korea. \href{mailto:jangjinw@iam.uni-bonn.de}{jangjinw@iam.uni-bonn.de} }

\author[R. M. Strain]{Robert M. Strain$^\dagger$}
\address{$^\dagger$Department of Mathematics, University of Pennsylvania, Philadelphia, PA 19104, USA.  \href{mailto:strain@math.upenn.edu}{strain@math.upenn.edu}}

\author[S.-B. Yun]{Seok-Bae Yun$^\ddagger$}
\address{$^\ddagger$Department of mathematics, Sungkyunkwan University, Suwon 440-746, Republic of Korea. \href{mailto:sbyun01@skku.edu}{sbyun01@skku.edu} }

%
%
%

\begin{document}



\let\thefootnote\relax\footnotetext{2010 \textit{Mathematics Subject Classification.} Primary: 35Q20, 76P05, 82C40,
	35B65, 83A05. \\ 
	\textit{Key words and phrases.}  Special relativity, Boltzmann equation, Carleman representation, Uniform upper bounds, Boltzmann H-theorem.}
\addtocounter{footnote}{-1}\let\thefootnote\svthefootnote

\begin{abstract}
In this paper, we prove the propagation of uniform upper bounds for the spatially homogeneous relativistic Boltzmann equation.  These polynomial and exponential $L^\infty$ bounds have been known to be a challenging open problem in relativistic kinetic theory.
To accomplish this, we establish two types of estimates for the gain part of the collision operator: first, we prove a potential type estimate and a relativistic hyper-surface integral estimate.
We then combine those estimates using the relativistic counterpart of the Carleman representation to derive uniform control of the gain term for the relativistic collision operator.  This allows us to prove the desired propagation of the uniform bounds of the solution.  We further present two applications of the propagation of the uniform upper bounds: first we give another proof of the Boltzmann $H$-theorem, and second we prove the asymptotic convergence of solutions to the relativistic Maxwellian equilibrium.
\end{abstract}

\setcounter{tocdepth}{1}

\maketitle
\tableofcontents

\thispagestyle{empty}

\section{Introduction}

The paper studies the special relativistic Boltzmann equation in the spatially homogeneous case for initial data of unrestricted size.  The Boltzmann equation including Einstein's theory of special relativity describes the statistical distribution of gaseous particles \cite{DeGroot,C-K}; it is a central dynamical model in special relativistic kinetic theory.

\subsection{Relativistic Boltzmann equation:}
The Cauchy problem for the spatially homogeneous relativistic Boltzmann equation reads
\begin{align}\label{RBE}
\begin{split}
\partial_t f= Q(f,f),\cr
f(p,0)=f_0(p),
\end{split}
\end{align}
where the particle distribution function $f(p,t)$ represents the density function of particles with momentum $p\in\mathbb{R}^3$ at time $t\geq0$.
The collision operator $Q(f,h)$ then can be decomposed as
\begin{equation*}
Q(f,h)=Q^{+}(f,h)-Q^{-}(f,h)
\end{equation*}
where the gain part $Q^+$ and the loss part $Q^-$ are defined by
\begin{align}\label{Q original}
\begin{split}
Q^+(f,h)&=\frac{1}{\pZ}\int_{\mathbb{R}^3}\frac{dq}{{\qZ }}\int_{\mathbb{R}^3}\frac{dq'\hspace{1mm}}{{\qZp }}\int_{\mathbb{R}^3}\frac{dp'\hspace{1mm}}{{\pZp }}  W(p,q|p',q')f(p')h(q'),\\
Q^-(f,h)&=\frac{1}{\pZ}\int_{\mathbb{R}^3}\frac{dq}{{\qZ }}\int_{\mathbb{R}^3}\frac{dq'\hspace{1mm}}{{\qZp }}\int_{\mathbb{R}^3}\frac{dp'\hspace{1mm}}{{\pZp }}  W(p,q|p',q')f(p)h(q).
\end{split}
\end{align}
The transition rate $W(p,q|p',q')$ is
\begin{align}\label{W}
W(p,q|p',q')=\frac{1}{2}s\sigma(g,\theta)\delta^{(4)}(p^\mu +q^\mu -p'^\mu -q'^\mu),
\end{align}
where $\sigma(g,\theta)$ is the scattering kernel measuring the interactions between particles, and the Dirac-delta function, $\delta^{(4)}$, enforces the conservation of energy and momentum \eqref{collision.invariants}.  For the sake of simplicity, and without loss of generality, we normalize several physical constants to be $1$, in particular we do not include notations for the speed of light and the rest mass.    Other notations are defined in the next section.

For later convenience, we define the \textit{collision frequency} $Lf$ as follows
\begin{align*}
Lf= \frac{1}{\pZ}\int_{\mathbb{R}^3}\frac{dq}{{\qZ }}\int_{\mathbb{R}^3}\frac{dq'\hspace{1mm}}{{\qZp }}\int_{\mathbb{R}^3}\frac{dp'\hspace{1mm}}{{\pZp }}  W(p,q|p',q')f(q),
\end{align*}
and then we rewrite  (\ref{RBE}) as
\begin{equation}\label{recall}
	\partial_t f+f Lf= Q^+(f,f).
\end{equation}
The relativistic Boltzmann operator $Q(f,f)$ satisfies (for $i=1,2,3$) that
$$
\int_\rth Q(f,f)dp=\int_\rth p^i Q(f,f)dp=\int_\rth \pZ Q(f,f)dp=0.
$$
These identities on the collision operator respectively lead to the formal conservation laws of mass, momentum, and energy respectively as follows
\begin{equation}\label{conlaw}
\int_\rth  \begin{pmatrix}1\\p\\\pZ \end{pmatrix} f(t,p)dp=\int_\rth\begin{pmatrix}1\\p\\\pZ \end{pmatrix}f_0(p) dp.
\end{equation}
The Boltzmann collision operator also formally satisfies that
$$
\int_\rth Q(f,f)\ln f dp\leq 0.
$$
This leads to the Boltzmann $H$-theorem for solutions to \eqref{RBE} which says that the entropy is non-increasing:
\begin{equation}\label{entropy.eq}
H(f(t))+\int^t_0D(f(s))ds\leq H(f_0),
\end{equation}
where the entropy functional is defined by
\begin{equation}\label{entropy.functional}
H(f(t)) = \int_\rth f(t,p)\ln f(t,p)dp.
\end{equation}
Further the entropy production rate is defined as
\begin{equation}\label{entropy.production}
D(f)=\int_{\mathbb{R}^6} \mollrV \sigma(\varrho,\theta)\big\{f(p^{\prime})f(q^{\prime})-f(p)f(q)\big\}\log\left(\frac{f(p^{\prime})f(q^{\prime})}{f(p)f(q)}\right)d\omega dpdq.
\end{equation}
This shows that the entropy functional $H(f(t))$ is decreasing in time for solutions to the relativistic Boltzmann equation \eqref{RBE}.   We will establish in Section \ref{htheorem} that the bounds proven in our main results grant sufficient control to prove that our solutions satisfy \eqref{entropy.eq}.  In the next subsection we define our notations.

%
%
%
%
%
%

\subsection{Notation} In this section we will define our various notational conventions on relativistic 4-vectors and the function spaces to be used in this article.
\begin{itemize}

\item  We use the notation $p^\mu$ where $\mu=0,1,2,3$ to denote a relativistic 4-vector.  We denote the 4-vector by it's components $p^\mu \in \{ \pZ, p^1, p^2, p^3 \}$ for $\mu\in\{0,1,2,3\}$.   Henceforth we usually call 4-vectors just vectors.

\item Generally Latin (spatial) indices $a,b,j,k,$ etc., take on the values $1,2,3,$ and Greek indices $\kappa, \lambda, \mu, \nu$, etc., take on the values $0,1,2,3$.  Indices are raised and lowered with the Minkowski metric $\eta_{\mu \nu}$ and its inverse $(\eta^{-1})^{\mu \nu}$,
such that  $p_\mu=\eta_{\mu\nu}p^\nu$.  In this article, we have that
\begin{eqnarray} \notag 
	\eta_{\mu \nu} = (\eta^{-1})^{\mu \nu}
	= \mbox{diag}(-1,1,1,1).
\end{eqnarray}

\item Here and throughout the rest of this article we use Einstein's summation convention that repeated indices, with one ``up" and one ``down" are summed over.

\item Then the Lorentz inner product of two 4-vectors with raised and lowered indices is given by
\begin{equation}\label{lorentz.inner.prd}
p^\mu q_\mu = p^\mu \eta_{\mu\nu} q^\nu=-{\pZ }{\qZ }+\sum^3_{i=1} p^i q^i.
\end{equation}

\item When a relativistic 4-vector $p^\mu$ satisfies the mass shell condition $p^\mu p_\mu=-1$ with ${\pZ }>0$, we call it
an {\bf energy-momentum} vector. In this case, we can express $p^\mu$ as $({\pZ },p)$ with $p\in \rth$.  Then ${\pZ }$, the energy of a relativistic particle with momentum $p$, is given by ${\pZ }=\sqrt{1+|p|^2}$.   In this article we always use the notation $p^\mu$ and $q^\mu$ to denote an energy-momentum vector.

Further the vectors $p^\mu$, $q^\mu$, $p^{\prime\mu}$ and $q^{\prime\mu}$ that appear in the relativistic Boltzmann equation \eqref{RBE} with \eqref{Q original} are all energy-momentum vectors.

\item We call a 4-vector $a^{\mu}$ {\bf space-like} if $a^{\mu}a_{\mu}>0$.

\item Alternatively we call $a^{\mu}$ {\bf time-like} if $a^{\mu}a_{\mu}<0$.

\item We define the weighted $L^1$ space $L^1_\exm$ with $\exm\geq 0$ as
$$L^1_\exm=L^1_\exm(\rth)=\{f:f \text{ measurable on }\rth, \hspace{1mm}\|f\|_{L^1_\exm}<\infty \},$$
where
$$
\|f\|_{L^1_\exm}\eqdef \int_{\rth}dp\hspace{1mm} (\pZ )^\exm \left| f(p) \right|.
$$
Similarly, we define the weighted $L^\infty$ space $L^\infty_{\exm}$ for $\exm\geq 0$ with the norm
$$\|f\|_{L^\infty_{\exm}}\eqdef \sup_{p\in \rth}|(\pZ )^\exm f(p)|.$$
\end{itemize}

With these notations in hand, in \eqref{W}, $s$ represents the square of the energy in the center of momentum frame
\begin{equation}
\label{s}
s=s(p^\mu,q^\mu)\eqdef-(p^\mu+q^\mu)(p_\mu+q_\mu)=2(-p^\mu q_\mu+1)\geq 0,
\end{equation}
and  $g$ denotes the relative momentum
\begin{equation}
	\label{g}
g=g(p^\mu,q^\mu)\eqdef\sqrt{(p^\mu-q^\mu)(p_\mu-q_\mu)}.  
\end{equation}
Then from \eqref{lorentz.inner.prd} we have
\begin{equation}\notag
g=g(p^\mu,q^\mu)= \sqrt{-(\pZ - \qZ)^2 + |p-q|^2},
\end{equation}
where $|p-q|$ is the standard Euclidean distance from $p$ to $q$ in three dimensional space.  In this paper we will always use $g$ and $s$ to mean $g=g(p^\mu,q^\mu)$ and $s=s(p^\mu,q^\mu)$.  However we will also use $g(a^\mu,b^\mu)$ etc for other four-vectors $a^\mu$ and $b^\mu$.  Note that $s$ and $g$ are related by $s=g^2+4$.
The scattering angle $\theta$ is defined by
\begin{equation*}
\cos\theta=\frac{(p^\mu-q^\mu)(p'_\mu-q'_\mu)}{g^2}.
\end{equation*}
This is known to be a well defined angle \cite{GL1996}, see the discussion below \eqref{p'}.  

Note that with the collision invariance
\begin{equation}\label{collision.invariants}
p^\mu + q^\mu = p^{\prime\mu} + q^{\prime\mu},  \quad \mu = 0, 1, 2, 3,
\end{equation}
we have further that  $g(p^\mu,q^\mu) = g(p^{\prime\mu},q^{\prime\mu})$ and similarly $s(p^\mu,q^\mu) = s(p^{\prime\mu},q^{\prime\mu})$.

Throughout this paper,  $C$ denotes a generic positive (generally large) uniform constant where $C$
may change values from line to line.   Further $A\lesssim B$ means that  there is a generic constant $C>0$
such that $A\leqslant CB$.  Then $A \approx B$ means that both $A\lesssim B$ and $B\lesssim A$ hold.

%
%
%
%
%

\subsection{Maxwellian equilibria} \label{SS:Maxwellians}
We now introduce the relativistic Maxwellians which are equilibria to \eqref{RBE}, they are also called the J{\"u}ttner distributions.

Given constants
$n > 0$, $\Temp > 0$, $u^{0} > 0$, and an energy-momentum vector $u^{\kappa}$ such that  $u_{\kappa} u^{\kappa} = -1$,
we define the
corresponding \emph{relativistic Maxwellian} as follows:
\begin{align}
	J = J(n,\Temp,u^\nu ;p^\mu)
	& \eqdef  \frac{n }{4 \pi k_B \Temp K_2(\frac{1}{k_B \Temp})}
		\exp\Big(\frac{p^{\kappa}u_{\kappa}}{k_B \Temp} \Big), \label{E:Maxwelliandef}
\end{align}
Above $k_B>0$ is \emph{Boltzmann's constant}, and $K_j(z)$ are the following modified second order Bessel functions:
\begin{align} \label{E:Besseldef}
K_j(z) & \eqdef \frac{(2^j)j!}{(2j)!} \frac{1}{z^j} \int_{\lambda = z}^{\lambda = \infty} e^{- \lambda}(\lambda^2 - z^2)^{j
			-  (1/2)} \, d \lambda, && (j \geq 0).
\end{align}
The relativistic Maxwellians \eqref{E:Maxwelliandef} are well known to be the global equilibrium solutions of the relativistic Boltzmann equation \eqref{RBE}; they also minimize the entropy \eqref{entropy.functional} under the restriction that their fluid proper number density $n$, their fluid temperature $\Temp$ and their fluid four-velocity $u^\mu$ are fixed (see e.g.  \cite[Chapter 2]{DeGroot} and \cite{GL1996,GS3}). 

In particular we note that since both $u^\nu$  and $p^\mu$ are future-directed (i.e. $u^0>0$ and $p^0>0$)  and timelike then we have that $p^{\kappa}u_{\kappa}<0$ since it holds that
$
p^{\kappa}u_{\kappa} \le - | p^{\kappa}p_{\kappa}|^{1/2} |u^{\kappa}u_{\kappa}|^{1/2} = -1.
$
We refer to \cite[Section 1.4]{MR2793935} for some additional explanations of the relativistic Maxwellians.

We will now define $T^{\mu \nu}[h]$, which is the \emph{energy-momentum tensor} for the relativistic Boltzmann equation, and $I^{\mu}[h],$ which is the \emph{particle current}.  Given any function $h(p)$, they are  defined as follows:
\begin{equation}
\label{E:TBoltzmanndef}
\begin{split}
T^{\mu \nu}[h] &
\eqdef
\int_{\mathbb{R}^3} p^{\mu} p^{\nu} h(p) \frac{d p}{\pZ},  \quad (0 \leq \mu,\nu \leq 3),
\\
I^{\mu}[h] &
\eqdef  \int_{\mathbb{R}^3} p^{\mu} h(p) \frac{d p}{\pZ},  \quad  (0 \leq \mu \leq 3).
\end{split}
\end{equation}
We can now express the conservation laws \eqref{conlaw} for a solution $f$ to the relativistic Boltzmann equation \eqref{RBE} as follows
\begin{equation}
\notag 
\begin{split}
T^{\mu 0}[f(t)] & =  T^{\mu 0}[f_0], \quad (0 \leq \mu \leq 3),
				\\
I^{0}[f(t)]  & =  I^{0}[f_0]  .
\end{split}
\end{equation}
Following the calculations in \cite[Proposition 3.3]{MR2793935}, it can further be shown that for the relativistic Maxwellian \eqref{E:Maxwelliandef} plugged into \eqref{E:TBoltzmanndef} we have
\begin{equation}
\notag 
\begin{split}
T^{\mu 0}[J]
& =  \left(n \frac{K_1(1/k_B\Temp)}{K_2(1/k_B\Temp)} +4k_B n \Temp \right) u^\mu u^0+ k_B n \Temp  (\eta^{-1})^{\mu 0}, \quad (0 \leq \mu \leq 3),
\\
I^{0}[J]
& =
n  u^0.
\end{split}
\end{equation}
Then for suitable initial data $f_0\ge 0$, such as those in our main theorems, we can choose constants $n > 0$, $\Temp > 0$ and  an energy-momentum vector $u^{\kappa}$ such that
\begin{equation}
\notag 
\begin{split}
T^{\mu 0}[J] & =  T^{\mu 0}[f_0], \quad (0 \leq \mu \leq 3),
				\\
I^{0}[J]  & =  I^{0}[f_0].
\end{split}
\end{equation}
This holds because there are five conservation laws and five unknowns from the constants.
Then further $u^0>0$ is defined by $u_{\kappa} u^{\kappa} = -1$.   We refer to the details of similar calculations in \cite{MR2793935,MR2989691,GL1996} and \cite[page 339]{GS3}.
This will be used in Section \ref{asympt}.

In the rest of this paper, due to the Lorentz invarance of the relativistic Boltzmann equation without loss of generality but for the sake of simplicity, we can normalize the physical constants to be one including choosing the fluid four-velocity $u^\mu$ to be $(1,0,0,0)$ with $n=\Temp=u^0=1$.  
Then, the global relativistic Maxwellian defined in \eqref{E:Maxwelliandef} is now equal to 
$$
J(p)=\frac{1}{4\pi}e^{-p^0}.
$$ 
Here we choose the Boltzmann constant $k_B=1$ as well as $n=\Temp=u^0=1$.  For the statement of Theorem \ref{maxwellian upper bound} and the proof in Section \ref{pmaxwell}, we use different temperatures, $\Temp_0$ and $\Temp_1$, while keeping the same $n=u^0=k_B=1$ so that we can make use of different global relativistic-Maxwellians such as $$C_0e^{-R_0\sqrt{1+|p|^2}}\text { and }C_1e^{-R_1\sqrt{1+|p|^2}},$$ where $R_0=\frac{1}{\Temp_0}$ and $R_1=\frac{1}{\Temp_1}$ for different $\Temp_0$ and $\Temp_1$ such that $\Temp_1>\Temp_0>0.$

\subsection{Center of momentum framework}
There are several ways to carry out the Dirac-delta integration in the collision operator (\ref{Q original}).
In the \textit{center-of-momentum} frame (or alternatively sometimes called the \textit{center-of-mass} frame) 
the gain term $Q^+$ and the loss term $Q^-$ are written  (see \cite{DeGroot} and  \cite{MR2765751}) as follows
\begin{align}\label{Q center of momentum}
\begin{split}
\displaystyle Q^+(f,h)&=\int_{\mathbb{R}^3\times \mathbb{S}^2} \mollrV \sigma(g,\theta)f(p^{\prime})h(q^{\prime})d\omega dq,\cr
\displaystyle Q^-(f,h)&=\int_{\mathbb{R}^3\times \mathbb{S}^2} \mollrV \sigma(g,\theta)f(p)h(q)d\omega dq,
\end{split}
\end{align}
where $\sigma(g,\theta)$ is again the scattering kernel, and the M{\o}ller velocity $\mollrV$ is given by
\begin{equation}\label{mollerV}
\mollrV =\mollrV(p,q)=\sqrt{\left|\frac{p}{\pZ }-\frac{q}{\qZ }\right|^2-\left|\frac{p}{\pZ }\times\frac{q}{\qZ }\right|^2}
=\frac{g\sqrt{s}}{\pZ \qZ }.
\end{equation}
Now the pre-collisional momentum pair $(p,q)$ and the post-collisional momentum pair $(p^{\prime},q^{\prime})$ are related by
\begin{equation}\label{p'}
\begin{split}
p^{\prime}=&\frac{p+q}{2}+\frac{g}{2}\left(\omega+(\gamma-1)(p+q)\frac{(p+q)\cdot\omega}{|p+q|^2}\right),\\
q^{\prime}=&\frac{p+q}{2}-\frac{g}{2}\left(\omega+(\gamma-1)(p+q)\frac{(p+q)\cdot\omega}{|p+q|^2}\right),
\end{split}\end{equation}
where $\gamma=(\pZ +\qZ )/\sqrt{s}$. The microscopic energy is given \cite{MR2765751} by
\begin{equation*}
\pZp =\frac{\pZ +\qZ }{2}+\frac{g}{2\sqrt{s}}\omega\cdot(p+q),
\quad \qZp=\frac{\pZ +\qZ }{2}-\frac{g}{2\sqrt{s}}\omega\cdot(p+q).
\end{equation*}
The relationship between the unitary $\omega \in \mathbb{S}^2$ introduced in \eqref{p'} and the scattering angle $\theta$ introduced in the scattering kernel $\sigma(g,\theta)$ is given by
$$\cos\theta = \frac{(p^\mu-q^\mu)(p'_\mu-q'_\mu)}{g^2 }= \frac{k}{|k|}\cdot \omega,$$ where $k$ is defined as $$k=-\frac{p+q}{\sqrt{s}}(p^0-q^0)+(p-q)+(\gamma-1)(p+q)\frac{(p+q)\cdot(p-q)}{|p+q|^2}.$$ The proof for this identity is given in \cite[page 5-6]{MR2765751}.
We note that the \textit{collision frequency} $Lf$ is then written by
\begin{align}\label{Lf}
\displaystyle Lf&=\int_{\mathbb{R}^3\times \mathbb{S}^2} \mollrV \sigma(g,\theta)f(q)~d\omega dq.
\end{align}
In the next section we will explain our main results.
%
%
%
%

%
%
%
%

\subsection{Main results}  In \cite[Theorem 4.2, page 933]{MR3166961}, it was shown under hypothesis \eqref{assumption} below that if
\begin{equation}\label{Existence}
f_0(p) \ge 0, \quad \|f_0\|_{L^1_2}<\infty,\quad  \int_{\mathbb{R}^3}f_0|\ln f_0|dp<\infty,
\end{equation}
then there exists a unique  global in time solution $f(p,t)\ge 0$ to \eqref{RBE} satisfying the conservation of mass, momentum and energy \eqref{conlaw}.  It was also shown that for this solution the H-theorem holds as in \eqref{entropy.eq}.   For these solutions specifically \eqref{Sol} holds.  Similar results were shown in \cite[Theorem 3.1, page 2257]{MR3169776}.

\subsubsection{Hypothesis on the collision kernel $\sigma(g,\theta)$}  We assume the relativistic analogue of the ideal hard-sphere assumption with Grad's angular cut-off assumptions.   Specifically, we assume that the collision kernel $\sigma(g,\theta)\ge 0$ satisfies
\begin{equation}\label{assumption}
\sigma(g,\theta)\approx g.
\end{equation}
We call this the ``hard ball'' as the kernel behaves as the classical Newtonian hard-sphere kernel (i.e. $|p-q|$) if either $|p+q|$ or $|p-q|$ is close to zero, or if $|p-q|$ is much larger than $|p^0-q^0|$.  We refer to \cite[Appendix B]{MR2679588} for a more detailed physical discussion of the collision kernels in relativistic kinetic theory.

Under this hypothesis, we obtain the following new $L^\infty$ propagation theorem:
\begin{theorem}{\bf [Uniform upper bound]}\label{main}
Fix $\exm>\frac{5}{2}$, and suppose that
\begin{align}\label{initial}
\|f_0\|_{L^1_1}<\infty,\quad\int_{\mathbb{R}^3}f_0|\ln f_0|dp<\infty, \quad \|f_0\|_{L^\infty_{\exm}}<\infty.
\end{align}
Let $f(p,t)\ge 0$ be a solution to the Cauchy  problem (\ref{RBE}) such that
\begin{align}\label{Sol}
\sup_{t\ge 0}\|f(t)\|_{L^1_1}<\infty,\quad\sup_{t\ge 0}\int_{\mathbb{R}^3}f(p,t)|\ln f(p,t)|dp<\infty.
\end{align}
Then $f(p,t)$ is uniformly bounded in $p$ and $t$ as follows:
\[
\sup_{t\ge 0}\|f(t)\|_{L^{\infty}} \le C_{f_0},
\]
for a constant $C_{f_0}>0$ which only depends only on the size of the initial quantities in \eqref{initial} and the conservation laws \eqref{conlaw}.
\end{theorem}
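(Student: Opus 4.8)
The plan is to run a Duhamel (mild-solution) argument on the rewritten equation \eqref{recall}, combined with a high/low truncation in the size of $f$ that is controlled by the entropy bound, and with the two announced gain-term estimates entering to control the source term.

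First I would establish a \emph{uniform lower bound on the loss frequency}: $Lf(p,t)\ge\nu_0>0$ for all $p\in\mathbb{R}^3$ and $t\ge 0$, with $\nu_0$ depending only on $\|f_0\|_{L^1}$ (conserved), on $\sup_t\|f(t)\|_{L^1_1}$, and on $\sup_t\int f|\ln f|\,dp$. For bounded $p$ this uses that, by conservation of mass and the $L^1_1$ bound, a fixed fraction of the mass of $f(t)$ lies in a fixed ball, while the entropy bound prevents that mass from concentrating in a small ball around $q=p$ (a set of small measure can carry only little mass when $\int f|\ln f|$ is bounded); hence $g(p^\mu,q^\mu)$ and $v_\phi$ stay bounded below on a set of $q$'s of controlled mass, using $\sigma\approx g$ from \eqref{assumption}. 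For large $|p|$ one instead uses $g(p^\mu,q^\mu)\gtrsim\langle p\rangle^{1/2}$ once $q$ ranges over that fixed ball. With $\nu:=Lf\ge\nu_0$, equation \eqref{recall} gives
\[
f(p,t)=e^{-\int_0^t\nu(p,\tau)\,d\tau}f_0(p)+\int_0^t e^{-\int_s^t\nu(p,\tau)\,d\tau}\,Q^+(f,f)(p,s)\,ds,
\]
so that $(\pZ)^{\exm}f(p,t)\le e^{-\nu_0 t}\|f_0\|_{L^\infty_{\exm}}+\int_0^t e^{-\nu_0(t-s)}\,(\pZ)^{\exm}Q^+(f,f)(p,s)\,ds$.

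The heart of the argument is to bound $(\pZ)^{\exm}Q^+(f,f)(p,s)$ by a quantity that is \emph{strictly sublinear} in $M(s):=\|f(s)\|_{L^\infty_{\exm}}$ up to a constant depending only on conserved quantities. For a threshold $N\ge e$ split $f=f^{\mathrm{lo}}+f^{\mathrm{hi}}$ with $f^{\mathrm{hi}}=f\,\mathbf 1_{\{f>N\}}$; the entropy bound yields $\|f^{\mathrm{hi}}(s)\|_{L^1_1}\le\varepsilon(N)$ with $\varepsilon(N)\to 0$ as $N\to\infty$. Expanding $Q^+(f,f)$ into its four bilinear pieces, I would use the relativistic Carleman representation to write each $Q^+(h_1,h_2)(p)$ as a $p'$-integral of $h_1$ against a hyper-surface integral of $h_2$ (or vice versa), and then: (i) on the hyper-surface integral, insert the \emph{relativistic hyper-surface integral estimate}, which bounds the surface integral of a factor $\le N$ by $CN$ and of $f^{\mathrm{hi}}$ by $C\|f\|_{L^\infty_{\exm}}$ — the convergence of this non-compact hyper-surface integral against the weight $(q'^0)^{-\exm}$ is precisely where $\exm>\tfrac52$ enters; (ii) on the $p'$-integral, insert the \emph{potential type estimate}, which controls the Riesz-type potential $\int K_1(p,p')\,h(p')\,dp'$ by $C\big(\|h\|_{L^1_{?}}+\|h\|_{L^\infty}^{1-1/r}\|h\|_{L^1}^{1/r}\big)$ for suitable $r$. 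Distributing the truncation so that, whenever possible, a \emph{bounded} ($\le N$) factor sits on the hyper-surface and the \emph{small-}$L^1$ factor $f^{\mathrm{hi}}$ sits in the potential, the $\mathrm{lo}$--$\mathrm{lo}$ term is $\le C_N$, the mixed terms are $\le C_N\big(1+M^{1-1/r}\big)$, and for the $\mathrm{hi}$--$\mathrm{hi}$ term one uses a cruder estimate exploiting only the smallness of $\|f^{\mathrm{hi}}\|_{L^1_1}\le\varepsilon(N)$, so that its contribution is at most $\varepsilon(N)\,M$ up to harmless factors. Choosing $N$ large then gives $(\pZ)^{\exm}Q^+(f,f)(p,s)\le C_N+\eta\,M(s)+C_N\,M(s)^{1-1/r}$ with $\eta<\nu_0$.

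Finally I would close: feeding this into the Duhamel inequality, equivalently into the differential inequality $\tfrac{d}{dt}M(t)\le -\nu_0 M(t)+C_N+\eta M(t)+C_N M(t)^{1-1/r}$ (justified by the standard upper-derivative/approximation argument for $M(t)=\sup_p(\pZ)^{\exm}f(p,t)$), the right-hand side is strictly negative once $M(t)$ exceeds a constant $M_*$ determined by $\nu_0,N,r$; hence $M(t)\le\max(\|f_0\|_{L^\infty_{\exm}},M_*)=:C_{f_0}$ for all $t$, and $\|f(t)\|_{L^\infty}\le\|f(t)\|_{L^\infty_{\exm}}\le C_{f_0}$, with $C_{f_0}$ depending only on the quantities in \eqref{initial} and \eqref{conlaw}.

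The main obstacle is the third paragraph: establishing the relativistic hyper-surface integral estimate and the potential estimate, and — more delicately — the \emph{weight bookkeeping} (powers of $\pZ$) when combining them through the relativistic Carleman representation, so that the final bound on $Q^+(f,f)$ genuinely closes in $L^\infty_{\exm}$ and is never quadratic in $M$. Handling the $\mathrm{hi}$--$\mathrm{hi}$ interaction, where no bounded factor is available to place on the hyper-surface, without losing a power of $M$, is the subtle point, and it is exactly there that both the entropy bound and the decay encoded by $\exm>\tfrac52$ are indispensable.
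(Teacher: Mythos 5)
There is a genuine gap, and your route differs fundamentally from the paper's.

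Your plan is to bound $Q^{+}(f,f)(p,s)$ \emph{pointwise} by an expression that is sublinear in $M(s)=\|f(s)\|_{L^{\infty}_{\exm}}$ and then close via a differential inequality for $M(t)$. Two things block this. First, the two announced gain estimates (Propositions \ref{lemma1} and \ref{lemma2}) are not of the Riesz--potential/trace form you invoke: they are \emph{integrated} bounds on $Q^{+}(f,f)$, namely $\int (\pZ)^{1/2} g(p^\mu,a^\mu)^{-1}\,Q^{+}(f,f)\,dp \lesssim \sqrt{\aZ}\,\|f\|_{L^1_1}^2$ and $\int (\pZ)^{1/2}\,Q^{+}(f,f)\,\delta(a^\mu(p_\mu-b_\mu))\,dp \lesssim \|f\|_{L^1_{1/2}}^2/\sqrt{a^\mu a_\mu}$. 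They do not, by themselves, give what you want to insert into the Carleman representation, which is a bound on $\int (\pZp)^{1/2} f(p')/g(p'^\mu,p^\mu)\,dp'$ and on the hyper-surface integral $\int (\qZp)^{1/2} f(q')\,\delta((p'^\mu-p^\mu)(q'_\mu-p_\mu))\,d\pi_{q'}$ of the \emph{solution} itself. The paper's key intermediate move, which your outline omits, is to multiply the equation \eqref{recall} by the weight $(\pZ)^{1/2}/g(p^\mu,a^\mu)$ (resp.\ $(\pZ)^{1/2}\delta(a^\mu(p_\mu-b_\mu))$) and use the damping $Lf\geq C_\ell \pZ$ together with Propositions \ref{lemma1}--\ref{lemma2} in a Gr\"onwall step, yielding Lemmas \ref{lemma3}--\ref{lemma4}: \emph{uniform-in-time} potential and hyper-surface bounds on $f(t)$ whose constants depend only on $\|f_0\|_{L^\infty_{\exm}}$ and $\|f_0\|_{L^1_1}$, not on $\sup_t\|f(t)\|_{L^\infty}$. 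Once those are available, the Carleman representation immediately gives $Q^{+}(f,f)(p,t)\leq C_{Q^+}$ with $C_{Q^+}$ independent of $M(t)$, and the theorem follows from $\partial_t f + C_\ell f \leq C_{Q^+}$ with no need for any high/low truncation or sublinearity bookkeeping.

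Second, even on its own terms, the high/low split is unlikely to close for the claimed range $\exm>\tfrac{5}{2}$. On the hyper-surface (a non-compact $2$-dimensional set), bounding the $f^{\mathrm{lo}}$ factor by $\min(N, M(\qZp)^{-\exm})$ and integrating gives roughly $N^{1-\frac{5}{2\exm}}M^{\frac{5}{2\exm}}$, while the potential $\int f(p')/|p'-p|\,(\pZp)^{1/2}\,dp'$ optimized against $\|f\|_{L^1_1}$ and $\|f\|_{L^\infty_\exm}$ contributes at best $M^{1/3}$; the product is sublinear only when $\tfrac{1}{3}+\tfrac{5}{2\exm}<1$, i.e.\ $\exm>\tfrac{15}{4}$, strictly stronger than the assumption. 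Finally, your claim that the entropy bound yields $\|f^{\mathrm{hi}}(s)\|_{L^1_1}\leq\varepsilon(N)$ with $\varepsilon(N)\to 0$ is not justified under \eqref{Sol}: the entropy controls $\|f^{\mathrm{hi}}\|_{L^1}$, but passing to the weighted norm $L^1_1$ requires tail control (a uniform $L^1_{1+\delta}$ bound) that is not among the hypotheses. The paper's Gr\"onwall transfer step avoids all of these difficulties, which is precisely why it is the crux of the argument.
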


Note that conditions (\ref{initial}) and (\ref{Sol}) are the minimal requirements that we use  to prove the propagation of the $L^{\infty}$ bound.
For the current existence theory, however, a slightly more stringent condition on $f_0$, such as \eqref{Existence}, is needed to guarantee the existence of $f$ satisfying (\ref{Sol}) as in \cite[Theorem 4.2, page 933]{MR3166961}.

Also, under some additional hypothesis described below, we can further obtain the propagation of polynomial moments and the propagation of relativistic Maxwellian upper bounds in the $L^\infty$ sense as follows:

\begin{theorem} \label{maxwellian upper bound} {\bf [Polynomial and Maxwellian upper bounds]}
In addition to the assumptions of Theorem \ref{main}, suppose further that $f_0$ is bounded by a relativistic Maxwellian as:
	\[
	f_0(p)\le C_0(\pZ )^{-m_0}e^{-R_0\sqrt{1+|p|^2}}
	\]
	for some $C_0>0$, for some integer $m_0\geq 0$  and for $R_0> 0$.
	Then there exist uniform constants $C_1>0$ and $R_1 > 0$  that are  independent of $t$ such that
	\begin{equation*}
	f(p,t)\leq C_1(\pZ )^{-m_0}e^{-R_1\sqrt{1+|p|^2}} ~\mbox{ for all }t> 0.
	\end{equation*}
We remark that  $R_1 < R_0$.
\end{theorem}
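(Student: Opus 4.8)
The plan is to propagate a weighted $L^\infty$ bound along the mild (Duhamel) form of \eqref{recall},
\[
f(p,t)=f_0(p)\,e^{-\int_0^t\nu(p,s)\,ds}+\int_0^t e^{-\int_s^t\nu(p,\tau)\,d\tau}\,Q^+(f,f)(p,s)\,ds,\qquad \nu(p,s):=Lf(p,s),
\]
using the potential-type and relativistic hyper-surface integral estimates established above, by a continuity argument. Two facts about the solution are needed beyond Theorem \ref{main}. First, a two-sided bound $\nu(p,t)\approx(\pZ)^{1/2}$ uniform in $t$: the lower bound comes from a uniform positive bound $\inf_{t\ge0}\int_{|q|\le R_\ast}f(q,t)\,dq\ge c_\ast>0$ on the mass in a fixed ball (a consequence of \eqref{conlaw} and $\sup_{t\ge0}\|f(t)\|_{L^\infty}\le C_{f_0}$), combined with $\sigma(g,\theta)\approx g$, the identity \eqref{mollerV}, and the elementary inequality $g^2\gtrsim\pZ$ valid for $\qv$ bounded and $|p|$ large; the lower bound degenerates only to a positive constant for bounded $\pv$. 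Second, propagation of all polynomial moments $\sup_{t\ge0}\|f(t)\|_{L^1_\ell}<\infty$ ($\ell\ge0$), via a relativistic Povzner-type inequality using $\sigma\approx g$, \eqref{conlaw} and Theorem \ref{main}. Running the analogous scheme with the purely polynomial weight $(\pZ)^{m}$, where the moment bounds play the role that the exponential localization plays below, already yields the \emph{polynomial} propagation $f_0\le C_0(\pZ)^{-m}\Rightarrow\sup_{t\ge0}\|(\pZ)^{m}f(t)\|_{L^\infty}<\infty$.

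For the Maxwellian statement set $M(p):=(\pZ)^{-m_0}e^{-R_1\sqrt{1+|p|^2}}$ and $w:=1/M$, with $R_1<R_0$ to be fixed, and assume first $m_0\ge1$ (the case $m_0=0$ is deduced at the end). The crucial gain estimate exploits the energy-momentum conservation $\pZp+\qZp=\pZ+\qZ$ underlying \eqref{p'}: since $w(p)M(p')M(q')=(\pZ)^{m_0}(\pZp\qZp)^{-m_0}e^{-R_1\qZ}$, after multiplying $Q^+(f,f)(p)$ by $w(p)$ the factor $e^{R_1\pZ}$ is absorbed and the produced factor $e^{-R_1\qZ}$ localizes $\qv$ to a bounded set. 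For $|p|$ large the center-of-momentum formulas \eqref{p'} then show that $\pZp,\qZp\approx\tfrac12\pZ$ on all of $\mathbb S^2$ except two caps of solid-angle measure $O((\pZ)^{-1})$ on which one of them stays $O(1)$. Splitting the $\omega$-integral accordingly—on the caps bounding the factor of $f$ with bounded post-collisional momentum by $\|f\|_{L^\infty}\le C_{f_0}$, on the bulk bounding both factors by $wf\le\|wf\|_{L^\infty}M$—gives, for $|p|$ large,
\[
w(p)\,Q^+(f,f)(p,s)\lesssim C_{f_0}\,\|w f(s)\|_{L^\infty}\,(\pZ)^{-1/2}+\|w f(s)\|_{L^\infty}^2\,(\pZ)^{1/2-m_0},
\]
i.e. a term \emph{linear} in the weighted norm coming from the caps plus a quadratic one carrying the extra decay $(\pZ)^{1/2-m_0}$ from the bulk. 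Inserting this in the Duhamel formula, dividing by $\nu(p,s)\gtrsim(\pZ)^{1/2}$ and using $\int_0^t e^{-\int_s^t\nu(p,\tau)\,d\tau}\,ds\lesssim(\pZ)^{-1/2}$, one obtains for $|p|\ge R_\sharp$
\[
w(p)\,f(p,t)\le \|f_0/M\|_{L^\infty}+\frac{C}{R_\sharp}\,\Big(\sup_{r\le t}\|w f(r)\|_{L^\infty}\Big)\Big(C_{f_0}+\sup_{r\le t}\|w f(r)\|_{L^\infty}\Big).
\]

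Now one runs the continuity argument on $\|wf(t)\|_{L^\infty}$. On $\{|p|<R_\sharp\}$ one has $w(p)f(p,t)\le C_\sharp:=C_{f_0}\sup_{|p|<R_\sharp}w(p)<\infty$ for free from Theorem \ref{main}, a constant that need not be small. On $\{|p|\ge R_\sharp\}$, on a time interval where $\sup_{r\le t}\|wf(r)\|_{L^\infty}\le\Lambda$, the last display gives $w(p)f(p,t)\le\|f_0/M\|_{L^\infty}+\tfrac{C}{R_\sharp}\Lambda(C_{f_0}+\Lambda)$, and for $R_\sharp$ large (depending only on $C_{f_0}$, $C_0$, the conserved quantities and universal constants—not on $\Lambda$) the scalar inequality $\|f_0/M\|_{L^\infty}+\tfrac{C}{R_\sharp}\Lambda(C_{f_0}+\Lambda)\le\Lambda$ admits a moderate-sized solution $\Lambda_0$, because its discriminant is positive once $C/R_\sharp$ is small; crucially \emph{no} smallness of the data is required, the initial contribution being only $\|f_0/M\|_{L^\infty}\le C_0<\infty$. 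This finiteness, together with the fact that the surplus decay $e^{-(R_0-R_1)\sqrt{1+|p|^2}}$ forces $w(p)f_0(p)\to0$ as $|p|\to\infty$—anchoring the bootstrap and ruling out a supremum escaping to spatial infinity—is exactly where $R_1<R_0$ is used. Taking $\Lambda=\max(\Lambda_0,C_\sharp)$, the set of times on which $\|wf(t)\|_{L^\infty}\le\Lambda$ is nonempty, closed and open, hence all of $[0,\infty)$, which is the claimed bound with rate $R_1<R_0$. Finally, for $m_0=0$: if $f_0\le C_0 e^{-R_0\sqrt{1+|p|^2}}$ then also $f_0\le C_0'(\pZ)^{-1}e^{-R_0'\sqrt{1+|p|^2}}$ for any $R_1<R_0'<R_0$, and the $m_0=1$ bound just proved gives $f(t)\le C(\pZ)^{-1}e^{-R_1\sqrt{1+|p|^2}}\le Ce^{-R_1\sqrt{1+|p|^2}}$.

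The step I expect to be the main obstacle is the weighted gain estimate above: turning the heuristic solid-angle decomposition into a rigorous bound, uniformly for large $|p|$, on
\[
(\pZ)^{m_0}\!\!\int_{\mathbb R^3\times\mathbb S^2}\!\!\mollrV\,\sigma(g,\theta)\,(\pZp\qZp)^{-m_0}\,e^{-R_1\qZ}\,d\omega\,dq
\]
(and the mixed version with $C_{f_0}$ in place of one copy of $M$), i.e. controlling precisely how the $\omega$-integration at fixed $p$ and bounded $q$ redistributes energy between $p'$ and $q'$. This is exactly what the relativistic Carleman representation and the hyper-surface integral estimate of the preceding sections are for, and it is where, if one insists on a clean uniform constant throughout, the loss $R_1<R_0$ appears.
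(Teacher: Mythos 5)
Your route is genuinely different from the paper's, and the key insight is the same — after multiplying by $w(p)=(\pZ)^{m_0}e^{R_1\pZ}$, the energy conservation $\pZp+\qZp=\pZ+\qZ$ together with the elementary inequality $4(\pZp\qZp)^2\ge(\pZ)^2$ lets the weight migrate onto $f(p')f(q')$ — but what you do with that insight is not. The paper packages it as the algebraic inequality $Q^+(h,h)\ge 2^{-m_0}w(p)Q^+(f,f)$ with $h=wf$, concludes that $h$ satisfies the \emph{same} differential inequality $\partial_t h+C_\ell\pZ h\le 2^{m_0}Q^+(h,h)$ that $f$ satisfies, then applies Proposition~\ref{upperG} (whose proof uses only the differential inequality together with finiteness of $\|h_0\|_{L^\infty_\exm}$, $\|h_0\|_{L^1_1}$ and $\sup_t\|h(t)\|_{L^1_1}$) to get a uniform bound on $Q^+(h,h)$ directly, with no smallness or bootstrap of any kind. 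The $L^1_1$-control on $h$ comes from Theorem~\ref{L1E} (propagation of \emph{exponential} moments), and that is precisely where $R_1<R_0$ enters: one needs $R_1<R<\bar R_0<R_0$ so that both $\|h(t)\|_{L^1_1}$ and $\|h_0\|_{L^\infty_\exm}$ are finite uniformly. You instead estimate $w(p)Q^+(f,f)(p)$ pointwise via a cap/bulk decomposition of the $\omega$-sphere (the caps being where one of $\pZp,\qZp$ stays bounded) and then run a continuity (bootstrap) argument.

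Two concrete gaps in your version. First, the bootstrap is not anchored: the step ``the set of times on which $\|wf(t)\|_{L^\infty}\le\Lambda$ is nonempty, closed and open'' requires \emph{a priori} finiteness and continuity of $t\mapsto\|wf(t)\|_{L^\infty}$, which is not furnished by Theorem~\ref{main} (that only gives $\|f\|_{L^\infty}$) nor by the polynomial moment propagation you invoke. The exponential weight must already be known finite before you can track it; this is exactly what the paper's appeal to Theorem~\ref{L1E} and Proposition~\ref{upperG} sidesteps, because the paper's argument is a time-integration (Gr\"onwall-type) estimate on integral quantities that are finite from the outset, not a fixed point on the $L^\infty$ norm. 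You would need to supplement your argument with exponential-moment propagation, not just polynomial, and then either a smoothing/approximation step or an equivalent a priori $L^\infty_w$ control. Second, the cap estimate itself, which you acknowledge is the crux, is asserted rather than proved; your heuristic is plausible (the cap solid-angle and scaling heuristics check out), but it is distinct from what the paper actually does — the paper never performs a geometric decomposition, it only uses $4(\pZp\qZp)^2\ge(\pZ)^2$ uniformly. Finally, a small numerics point: Lemma~\ref{lowerL} gives $\nu\approx\pZ$, not $(\pZ)^{1/2}$; using the sharper lower bound only helps you, but the exponents in your bootstrap inequality then change from the ones you wrote.

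Stylistically, your approach, if completed, would be closer to a local-in-$\pv$ careful energy-redistribution argument of the type common in the classical Boltzmann literature; it buys you an explicit picture of \emph{why} the Maxwellian decay propagates (the caps produce the linear term and force the exponent to drop). The paper's route is shorter and avoids any bootstrap, at the price of hiding the mechanism inside the already-proved $Q^+$ machinery. But as written your proof has the two gaps above; the first of these is a genuine obstruction, not merely a detail.
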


Next we obtain the polynomial bounds under some slightly different assumptions.

\begin{theorem} \label{poly.upper.bound} {\bf [Polynomial upper bounds]}.
In addition to the assumptions of Theorem \ref{main}, we assume that $\| f_0\|_{L^1_{m_0+1}}<\infty$.  Suppose further that $f_0$ is bounded for some $C_0>0$ as:
	\[
	f_0(p)\le C_0(\pZ )^{-m_0-\exm}, \quad \exm >\frac{5}{2},\quad m_0\geq 0.
	\]
	Then there exist a uniform constant $C_1>0$  that is  independent of $t$ such that
	\begin{equation*}
	f(p,t)\leq C_1(\pZ )^{-m_0} ~\mbox{ for all }t\ge  0.
	\end{equation*}
\end{theorem}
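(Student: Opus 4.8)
The plan is to run the machinery behind Theorem \ref{main} with the polynomial weight $(\pZ)^{m_0}$ inserted, taking as external inputs the uniform $L^\infty$ bound from Theorem \ref{main} and a propagation-of-moments estimate. First I would record that $M_\infty:=\sup_{t\ge0}\|f(t)\|_{L^\infty}<\infty$ by Theorem \ref{main}, and that the extra hypothesis $\|f_0\|_{L^1_{m_0+1}}<\infty$, together with the conservation laws \eqref{conlaw}, the bound \eqref{Sol}, and the hard ball structure \eqref{assumption}, yields (via a relativistic Povzner-type argument for the gain of integrability) the uniform moment bound $M_1:=\sup_{t\ge0}\|f(t)\|_{L^1_{m_0}}<\infty$. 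I would also use that conservation of mass and energy forces $\int_{|q|\le R}f(q,t)\,dq\ge\tfrac12\|f_0\|_{L^1}$ for $R$ large, uniformly in $t$, which by a direct estimate of \eqref{mollerV}, \eqref{g}, \eqref{Lf} on $\{|q|\le R\}$ gives the coercive lower bound $Lf(p,t)\ge\nu_*>0$ for all $|p|\ge R_0$ and all $t\ge0$. Since $(\pZ)^{m_0}f\le(\sqrt{1+R_0^2})^{m_0}M_\infty$ on $\{|p|\le R_0\}$, the task reduces to bounding $(\pZ)^{m_0}f$ on $\{|p|\ge R_0\}$.

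Next I would set $F:=(\pZ)^{m_0}f$, so that multiplying \eqref{recall} by $(\pZ)^{m_0}$ gives $\partial_t F+F\,Lf=(\pZ)^{m_0}Q^+(f,f)$. Using energy conservation \eqref{collision.invariants} in the form $\pZ\le\pZ+\qZ=\pZp+\qZp$, hence $(\pZ)^{m_0}\le2^{m_0}\big((\pZp)^{m_0}+(\qZp)^{m_0}\big)$, and the center-of-momentum form \eqref{Q center of momentum}, I would split
\[
(\pZ)^{m_0}Q^+(f,f)(p)\ \le\ 2^{m_0}\big(Q^+(F,f)(p)+Q^+(f,F)(p)\big)\ \le\ 2^{m_0}\big(\|F(t)\|_{L^\infty}\,Q^+(1,f)(p)+M_\infty\,Q^+(1,F)(p)\big).
\]
Then I would invoke the gain estimates of the paper — the potential-type estimate and the relativistic hypersurface integral estimate, combined through the relativistic Carleman representation. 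Splitting the collision integral in \eqref{Q center of momentum} into a bounded-momentum-transfer region and its complement, the dispersion of $p'$, $q'$ over the collision hypersurface, together with the excess decay of the data (the hypothesis $f_0\le C_0(\pZ)^{-m_0-\exm}$ with $\exm>\tfrac52$ is precisely the rate under which those estimates produce uniform-in-$p$ control) and the moment bound $M_1$, should show that the high-momentum part of $Q^+(1,f)(p)$ is $\le\delta\,Lf(p,t)$ for $\delta$ arbitrarily small, while $M_\infty\,Q^+(1,F)(p)$ and the remaining part are bounded by a constant $C=C(\delta,M_\infty,M_1,\|f_0\|_{L^1_{m_0+1}})$. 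Altogether this should give, for $|p|\ge R_0$ and $\delta$ small,
\[
(\pZ)^{m_0}Q^+(f,f)(p,t)\ \le\ \tfrac12\,Lf(p,t)\,\|F(t)\|_{L^\infty}+C.
\]

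Finally I would solve the linear equation for $F$ by the integrating factor $\exp(\int_0^tLf(p,s)\,ds)$ (characteristics being trivial in the spatially homogeneous case), insert the previous bound, use $Lf\ge\nu_*$ on $\{|p|\ge R_0\}$, and take the supremum in $p$ and in $0\le s\le t$ to obtain $\sup_{0\le s\le t}\|F(s)\|_{L^\infty}\le2\|F_0\|_{L^\infty}+2C/\nu_*+(\sqrt{1+R_0^2})^{m_0}M_\infty$, uniformly in $t$; since $\|F_0\|_{L^\infty}=\|f_0\|_{L^\infty_{m_0}}\le C_0$, this is the asserted bound $f(p,t)\le C_1(\pZ)^{-m_0}$. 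To keep the subtraction of infinities legitimate one should first run the argument with the truncated weight $(\pZ\wedge N)^{m_0}$ and let $N\to\infty$, or equivalently organize the whole argument as an induction on the integer $m_0$ with base case $m_0=0$ given by Theorem \ref{main}.

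The step I expect to be the main obstacle is the weighted gain estimate. Unlike in Theorem \ref{main}, the factor $F$ carries no a priori decay, so one must genuinely exploit energy conservation to move the weight onto the post-collisional momenta and then rely on the relativistic potential and hypersurface estimates — the technical core of the paper — to recover simultaneously the $\pZ$-decay and the smallness needed to absorb the weighted gain into the coercive loss term, while keeping every constant uniform in time through the conservation laws and the propagated moments.
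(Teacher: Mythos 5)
Your overall plan — weight the equation by $(\pZ)^{m_0}$, control the weighted gain, and close with the coercive loss — is the right template, but the central step is not carried out correctly and, as written, has a genuine gap. The decomposition you choose is $\pZ\le \pZp+\qZp$, hence $(\pZ)^{m_0}\lesssim (\pZp)^{m_0}+(\qZp)^{m_0}$, which produces the mixed terms $Q^+(F,f)+Q^+(f,F)$. To close these you then assert that the weighted gain can be absorbed into the loss with an arbitrarily small factor $\delta$, namely $Q^+(1,f)(p)\le \delta\,Lf(p)+C$. This is not justified and is not obviously true: $Q^+(1,f)$ evaluates $f$ at a post-collisional momentum $q'$ while $Lf$ evaluates $f$ at the pre-collisional $q$, and in the relativistic setting the pre-post change of variables has an unbounded Jacobian (a difficulty the paper explicitly flags), so there is no pointwise comparison with a small constant available. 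Without that smallness the Gronwall argument in your third step does not close. Moreover $Q^+(1,F)$ is not covered by any of the paper's estimates, since $1\notin L^1_1$.

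What the paper does instead is use the sharper algebraic fact \eqref{similar}: $4(\pZp\qZp)^2\ge(\pZp+\qZp)^2=(\pZ+\qZ)^2\ge(\pZ)^2$, i.e.\ $\pZp\qZp\ge\pZ/2$, so the weight $(\pZ)^{m_0}$ can be bounded by the \emph{product} $2^{m_0}(\pZp)^{m_0}(\qZp)^{m_0}$. Plugging this into the center-of-momentum form \eqref{Q center of momentum} gives directly
\[
Q^+(h,h)\ \ge\ 2^{-m_0}(\pZ)^{m_0}Q^+(f,f),\qquad h:=(\pZ)^{m_0}f,
\]
so that $h$ satisfies the \emph{self-contained} differential inequality $\partial_t h + C_\ell(\pZ)h\le 2^{m_0}Q^+(h,h)$. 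The whole machinery of Lemmas \ref{lemma3}, \ref{lemma4} and Proposition \ref{upperG} then applies verbatim to $h$ (those proofs only need the differential inequality, not equality), provided $\|h_0\|_{L^\infty_\varrho}$, $\|h_0\|_{L^1_1}$ and $\sup_t\|h(t)\|_{L^1_1}$ are finite — which is exactly what the hypotheses $f_0\le C_0(\pZ)^{-m_0-\varrho}$, $\|f_0\|_{L^1_{m_0+1}}<\infty$, and Theorem \ref{L1E} supply. This gives $Q^+(h,h)\le C'$ uniformly and then the ODE comparison finishes. No absorption with a $\delta$ is needed, no separate lower bound on $L$ away from $\{|p|\le R_0\}$ is needed (Lemma \ref{lowerL} already gives $Lf\ge C_\ell\pZ\ge C_\ell$ everywhere), and the truncation/induction caveat at the end of your proposal is also unnecessary. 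The single missing idea is the product inequality $\pZp\qZp\ge\pZ/2$; with it, the argument collapses to a few lines.
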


Now the propagation of uniform, polynomial, and Maxwellian upper-bounds is one of the most interesting issues in the study of Boltzmann equation in that (1) it gives a control on the solution which the a-priori quantities of the Boltzmann equation, namely the conserved quantities \eqref{conlaw} and the entropy \eqref{entropy.eq}, cannot immediately provide, and (2) it is derived through the full exploitation of two important mathematical properties of the collision operator, namely, the damping effect of the loss term and the regularizing effect of the gain term.  The $L^{\infty}$ theory  also has various applications to the study of the Boltzmann equation. For example, it can be used in the proof of the H-theorem, since the $L^{\infty}$ propagation theory can guarantee that the approximate solution associated to the mollified initial data remains bounded from above and below so that the formal computation to derive the H-thereom can be justified (this is discussed in Section \ref{htheorem}). As such, a suitable $L^{\infty}$-estimate can be an important building block in the study of the asymptotic behavior of the Boltzmann equation, which is one of the most highlighted issues in the kinetic theory.

In this paper, we generalize the non-relativistic framework of \cite{MR711482,Car}. In \cite{Car},
Carleman established the uniform upper bound propagation for rotationally symmetric solutions to the classical homogeneous Boltzmann equation.  And Arkeryd in \cite{MR711482} then extended the result to general solutions without rotational symmetry.

The main idea is as follows. If one can obtain a uniform lower bound of $L$, as $Lf>C_1$ (damping effect), and a uniform upper bound for $Q^+$, as $Q^+<C_2$ (regularizing effect), for some positive constants $C_1$, $C_2$, one gets from  \eqref{recall} that
\[
\partial_tf+C_1f\leq C_2,
\]
which immediately implies the uniform boundedness of the solution.
Therefore, the key difficulty to realize this idea arises in the uniform control of $Lf$ and $Q^+$.
The relativistic adaption of these arguments, however, turned out to be highly non-trivial due to the complicated structure of the relativistic collision operator.
The lower-bound estimate of $Lf$ is already given in \cite{MR3166961}, so the main issue is whether we can obtain the uniform control on $Q^+$ as well. Applying existing known techniques for the $Q^+$ estimates from the classical Boltzmann literature such as \cite{MR711482} and \cite{Car} to the relativistic situation, however, turned out to be extremely difficult for the following reasons:
\begin{itemize}
	\item It is very limited to use the change of pre-post collisional variables $p\mapsto p'$ as the Jacobian is no longer uniformly bounded above and below in the relativistic scenario.  This was studied in \cite{Jang2016}.

	\item We have lacked a relativistic counterpart for the Carleman representation formula that we were able to use in this framework.

	\item The relativistic counterpart of the interaction hypersurface turned out to be a 2-dimensional hyperboloid: $(p'^\mu-p^\mu)(q'_\mu-p_\mu)=0$, which is highly nonlinear.

	\item Most crucially, each of the integral estimates requires extremely complicated computations due to the representations of the post-collisional momentums such as $\eqref{p'}$ and the use of the nonlinear 2-dimensional hyperboloid.    For example, estimating $Q^+(Q^+(f,g),h)$  as in \eqref{Q original}, using the prior methods,  appears to be extremely difficult in the relativistic regime due to the very complicated non-linear relativistic geometry.

\end{itemize}

In this paper, in order to resolve the  difficulties above, we derive a relativistic counterpart of the celebrated Carleman representation (Proposition \ref{relativistic Carleman}):
\begin{multline}\notag 
Q^+(f,f)
=\frac{1}{2\pZ } \int_{\mathbb{R}^3}\frac{dp'}{\pZp }f(p')\int_{(p'^\mu-p^\mu)(q'_\mu-p_\mu)=0}\frac{d\pi_{q'}}{\qZp }\frac{s\sigma u(\pZp +\qZp -\pZ ) f(q')}{\left|(\pZp -\pZ )\frac{q}{\qZ }-(p'-p)\right|},
\end{multline}
where $u(x)$ is defined in \eqref{u}.
This expression is achieved by raising the 3-dimensional integral in \eqref{Q original}, on the mass-shell boundary to the four-dimensional integral, carrying out the Dirac-delta integration and applying the simple layer formula.
Then, a careful analysis of an intermediate form of the relativistic Carleman representation reveals that we need to establish a potential type estimate of $Q^+$
$$
\int_\rth \frac{(\pZ )^{1/2}}{g(p^\mu,a^\mu)}Q^+(f,f)dp,
$$
for any energy-momentum vector $a^\mu$,
and we need to establish the estimate of the integral of $Q^+$ restricted to  relativistic hyper-surfaces
$$
\int_\rth dp \hspace{1mm}(\pZ )^{1/2}Q^+(f,f) \delta(a^\mu(p_\mu-b_\mu)),
$$
for an arbitrary space-like 4-vector $a^\mu$ and  energy-momentum 4-vector $b^\mu$.
In general, we write the 
\textit{weak formulation} for the homogeneous relativistic Boltzmann equation \eqref{recall} as 
\begin{equation}\label{weakformulation}\partial_t \int_{\mathbb{R}^3} f\varphi(p)dp+\int_{\mathbb{R}^3}f Lf\varphi(p)dp=\int_{\mathbb{R}^3} Q^+(f,f)\varphi(p)dp. \end{equation} 
We then say that $f$ is a weak solution to the relativistic Boltzmann equation if it satisfies \eqref{weakformulation} for every $\varphi$ that is a nonnegative Radon measure. 

The key factor common for both estimates is to transform the integral by applying a suitable change of variables to the Dirac-delta representation of $Q$ from (\ref{Q original}) using a specific Lorentz transformation matrix given in \eqref{Lorentz},  which enables one
to work in the center-of-momentum frame.  Unlike most of the previous results where the specific form of the Lorentz transformation is irrelevant, however, we estimate the contribution of each row of the Lorentz transformation separately and show that only the first row matters in the estimate, which enables one to avoid estimating the highly prohibitive singularities in all the other rows of the type $1/|p\times q|$.
This all leads to the following control from below of the relative momentum:
\begin{align*}
\displaystyle\sqrt{-\left(\frac{\sqrt{s}}{2}-\Lambda^0_{~\mu} a^\mu\right)^2+\left|\frac{g}{2}\omega -\Lambda a\right|^2}
\gtrsim
\frac{\left|\frac{g}{2}\omega-\Lambda a\right|}{\sqrt{\aZ}\big(\pZ \qZ \big)^{1/2}}.
\end{align*}
This is shown in Lemma \ref{key lemma}.  The manipulations used to compute each row of the Lorentz transformation  separately, to the author’s best knowledge, have never previously been employed in the study of relativistic kinetic equations.

\subsection{A brief history of previous results}
In this section we will give a brief history of previous results in relativistic kinetic theory.  We will only emphasize the results that are most closely related to this paper.

The first global-in-time existence result for the relativistic Boltzmann equation  was obtained by Dudy\'{n}ski and Ekiel-Je\.{z}ewska for the linearized equation in \cite{D-E3,D} in 1988-89. The full nonlinear case was then  studied by Glassey and Strauss \cite{GS3,GS4} in 1993 and 1995.
The global existence, uniqueness and stability of the relativistic Vlasov-Maxwell-Landau system with self consistent electro-magnetic field was proven by Guo and Strain in \cite{Guo-Strain3} for nearby relativistic Maxwellian equilibrium initial data in 2004.
They further proved the global existence for the relativistic Vlasov-Maxwell-Boltzmann equation \cite{Guo-Strain2} near Maxwellian in 2012.  Here the center of momentum coordinates  and another set of coordinates were used in a complementary manner to control the singularities created by the derivatives of the post-collisional momentum variables in the relativistic collision operator.  For a systematic derivation of the center of momentum representation of the relativistic collision operator, see \cite{MR2765751}.
Glassey established in 2006 in \cite{GL-Vacuum} a relativistic counterpart of the the near-vacuum regime theory;  see also \cite{MR2679588}. For the Newtonian limit of the relativistic Boltzmann equation, see \cite{Cal,MR2679588}.  A study of blow-up  for the relativistic Boltzmann equation without the loss term can be found in \cite{MR2102321}.  We refer to \cite{D-E0,D-E0er,D-E2,MR2378164} for the Cauchy problem in the framework of the renormalized solutions \cite{D-L1,D-L2}.
In regards to the regularizing effect of the relativistic collision operator we have \cite{MR1402446, MR3880739, MR1450762}.

Works on various relaxation time approximations of the relativistic Boltzmann equation started recently.
See \cite{MR2988960,MR3300786,1801.08382} for the study of the Marle type relativistic BGK model,
and \cite{1811.10023} for the Anderson-Witting type relativistic BGK model.
Recently, a novel BGK type model was introduced in \cite{Pennisi_2018} and the existence is derived in \cite{H-R-Yun}.

 The $L^\infty$ propagations for the classical homogeneous Boltzmann equation is well established. We would like to mention the work of Bobylev \cite{MR1478067} in 1997, which shows that the exponentially-weighted $L^1$ estimates propagate in the 3-dimensional hard-sphere case. Motivated by this work, Gamba, Panferov and
Villani \cite{MR2533928} in 2009 proved that the exponentially-weighted $L^\infty$ (pointwise) estimates propagate by means of the Carleman
representation  for example under the assumption that the angular transition $b(\cos\theta)\sin^\alpha\theta$ is bounded for some $0 \le \alpha < d-1$. More recently, Alonso, Gamba, and Taskovic \cite{1711.06596} extended this result under the more general assumption in the angular transition that $b(\theta)\in L^1(\mathbb{S}^2)$ and improved the decay rate to the Maxwellian equilibrium. 

In contrast to the  Newtonian case where the homogeneous theory for the Boltzmann equation is well established,
the literature on the spatially homogeneous relativistic Boltzmann equation is very limited.
The Cauchy problem for various cosmological models is studied in \cite{MR3169776}. The existence and various moment estimates are studied in \cite{MR3166961}.  In \cite{StrainTas}, the entropy dissipation estimate was shown for weak solutions to the spatially homogeneous  relativistic Landau equation.   Then, with that estimate, the global existence of a standard weak solution was established as well as the propagation of any high order polynomial moment.  In \cite{1903.05301} the conditional uniqueness of a weak solution was shown for the spatially homogeneous relativistic Landau equation.  Further general references on relativistic and non-relativistic kinetic equations can be found in \cite{C,C-I-P,C-K,E-M-V,DeGroot,GL1996,Vil02}.

\subsection{Outline of the remainder of this article}
The rest of this paper is organized as follows. In Section \ref{prelim}, we present various useful technical lemmas. In Section \ref{egain},
we establish a potential type estimate of $Q^+$ and an estimate of $Q^+$ restricted to relativistic hyper-surfaces. In Section \ref{esol}, we
use the $Q^+$ estimates of Section \ref{egain} to obtain corresponding uniform estimates for the solutions to \eqref{RBE}. We then derive the relativistic Carleman representation
in Section \ref{pupper}, and then we use it to establish a uniform bound on $Q^+$ for solutions to \eqref{RBE}.  This uniform bound for $Q^+$ then yields Theorem \ref{main}. Section \ref{pmaxwell} is devoted to the proof of the
propagation of uniform polynomial and Maxwellian upper bounds. In Sections \ref{htheorem} and \ref{asympt}, we present two applications of our main results, namely, the celebrated $H$-theorem and the asymptotic behavior of solutions respectively.

%
%
%
%
%

\section{Preliminary estimates}\label{prelim}
In this section we will introduce some technical lemmas that will be crucially used later in the paper. We start with the following well known coercive inequality for the relative momentum in the center of momentum framework.

\begin{lemma}[Lemma 3.1 (i) on page 316 of \cite{GS3}]\label{coersive inequality}  The relative momentum $g$ satisfies the following inequalities:
\begin{equation}\label{gINEQ}
\frac{|p-q|}{\sqrt{\pZ \qZ }}\leq g(p^{\mu},q^{\mu})\leq |p-q|.
\end{equation}
\end{lemma}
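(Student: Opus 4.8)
The plan is to prove both inequalities in \eqref{gINEQ} by unwinding the definition of $g$ and then reducing the lower bound to the single elementary inequality $\pZ\qZ \ge 1+|p||q|$. Recall from \eqref{g} and \eqref{lorentz.inner.prd} (as already recorded in the excerpt) that
\[
g(p^\mu,q^\mu)^2 = -(\pZ-\qZ)^2 + |p-q|^2 .
\]
The upper bound is then immediate: since $(\pZ-\qZ)^2\ge 0$ we get $g^2\le |p-q|^2$, hence $g\le|p-q|$. (The same identity, combined with the computation below, also shows $g^2\ge 0$, so that $g$ is well defined.)

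For the lower bound $g\ge |p-q|/\sqrt{\pZ\qZ}$ I would clear denominators: it is equivalent to $\pZ\qZ\, g^2 \ge |p-q|^2$, i.e. to
\[
(\pZ\qZ-1)\,|p-q|^2 \;\ge\; \pZ\qZ\,(\pZ-\qZ)^2 .
\]
Writing $\theta$ for the angle between $p$ and $q$ and using $(\pZ)^2=1+|p|^2$, $(\qZ)^2=1+|q|^2$, one has $|p-q|^2=|p|^2+|q|^2-2|p||q|\cos\theta$ and $g^2 = 2(\pZ\qZ-1-|p||q|\cos\theta)$, so the difference $\pZ\qZ\, g^2-|p-q|^2$ is an affine function of $\cos\theta$ with non-positive slope $-2|p||q|(\pZ\qZ-1)$. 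Hence it is minimized at $\cos\theta=1$, and it suffices to treat the case where $p$ and $q$ are parallel, $p\cdot q=|p||q|$ (which is also the Cauchy--Schwarz extreme).

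In that aligned case, substituting $(\pZ\qZ)^2=1+|p|^2+|q|^2+|p|^2|q|^2$ and regrouping, the inequality collapses to
\[
(|p|-|q|)^2 + 2\bigl(1+|p||q|\bigr)\bigl(1+|p||q|-\pZ\qZ\bigr)\;\ge\;0 .
\]
The key input is the identity
\[
(\pZ\qZ)^2-(1+|p||q|)^2=(1+|p|^2)(1+|q|^2)-(1+|p||q|)^2=(|p|-|q|)^2,
\]
which in particular yields $\pZ\qZ\ge 1+|p||q|$ and lets one rewrite $\pZ\qZ-1-|p||q|=(|p|-|q|)^2/(\pZ\qZ+1+|p||q|)$. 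Plugging this in, the left-hand side above becomes $(|p|-|q|)^2\,\frac{\pZ\qZ-1-|p||q|}{\pZ\qZ+1+|p||q|}\ge 0$, which closes the estimate.

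I do not expect a serious obstacle here: the argument is a finite, explicit computation. The only point needing a little care is organizing the algebra so that the single inequality $\pZ\qZ\ge 1+|p||q|$ (equivalently the identity $(\pZ\qZ)^2-(1+|p||q|)^2=(|p|-|q|)^2$) is visibly doing all the work; the reduction to the parallel case via monotonicity in $\cos\theta$ is what makes the bookkeeping short.
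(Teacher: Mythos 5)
The paper does not actually prove this lemma; it is quoted verbatim from Lemma~3.1(i) of Glassey--Strauss \cite{GS3}, so there is no internal proof with which to compare. You have therefore supplied something the paper does not: a self-contained verification. Your argument is correct. The upper bound follows at once from $g^2=|p-q|^2-(\pZ-\qZ)^2$. For the lower bound you reduce to the parallel case by observing that $\pZ\qZ\,g^2-|p-q|^2$ is affine in $\cos\theta$ with slope $-2|p||q|(\pZ\qZ-1)\le 0$, and then, after substituting $(\pZ\qZ)^2=1+|p|^2+|q|^2+|p|^2|q|^2$, you indeed land on
\[
(|p|-|q|)^2+2(1+|p||q|)\bigl(1+|p||q|-\pZ\qZ\bigr)
= (|p|-|q|)^2\,\frac{\pZ\qZ-1-|p||q|}{\pZ\qZ+1+|p||q|}\;\ge 0,
\]
which closes the estimate via the identity $(\pZ\qZ)^2-(1+|p||q|)^2=(|p|-|q|)^2$. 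I checked each of these algebraic steps and they hold. For the record there is a slightly shorter route that avoids the reduction in $\cos\theta$: write $\pZ-\qZ=\frac{|p|^2-|q|^2}{\pZ+\qZ}$, bound $\bigl||p|-|q|\bigr|\le|p-q|$, and the target inequality $\pZ\qZ\,g^2\ge|p-q|^2$ reduces after simplification to $(\pZ|q|-\qZ|p|)^2\ge 0$. Either way, the key structural fact is the same one your proof isolates, namely $\pZ\qZ\ge 1+|p||q|$ with $(\pZ\qZ)^2-(1+|p||q|)^2=(|p|-|q|)^2$.
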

\begin{proof}
It is very easy to see that 
$$g=\sqrt{-(p^0-q^0)^2+|p-q|^2}\le |p-q|.$$ For the lower-bound, we observe from \eqref{g} that
\begin{multline*}
g^2=2(p^0q^0-p\cdot q -1)
=\frac{2 ( (p^0q^0)^2-(p\cdot q +1)^2)}{p^0q^0+p\cdot q +1}\\=\frac{2 ( (1+|p|^2)(1+|q|^2)-(p\cdot q +1)^2)}{p^0q^0+p\cdot q +1}\\
\ge \frac{2 (|p|^2+|q|^2+|p|^2|q|^2-(p\cdot q)^2-2p\cdot q)}{2p^0q^0}
=\frac{|p-q|^2+|p\times q|^2}{p^0q^0}.
\end{multline*}
Above we use the Cauchy-Schwarz inequality as $p^0q^0\ge p\cdot q +1.$
\end{proof} 
In the next lemma, we derive a uniform lower bound estimate for the loss term.

\begin{lemma}[Lemma 3.3 on page 925 of \cite{MR3166961}]\label{lowerL}
	Let $f(p,t)$ have finite mass, energy, and entropy as in \eqref{conlaw}, \eqref{entropy.eq}, \eqref{initial} and \eqref{Sol}. Then there exists uniform positive constants $C_{\ell}>0$ and $C_u>0$, which are determined only by the mass, energy, and entropy of the initial data $f_0$, such that the following estimate holds:
$$
C_{\ell} \pZ \leq
\int_{\rth\times \mathbb{S}^2}\mollrV  \sigma(g,\theta)f(q,t)d\omega dq
= (Lf)(p,t)
\leq C_u \pZ.
$$
\end{lemma}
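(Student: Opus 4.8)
The plan is to establish the two inequalities by rather different means. The upper bound is a pointwise estimate on the kernel $\mollrV\sigma$ integrated against $f$, while the lower bound extracts from the conserved mass and energy, together with the uniform entropy bound, a quantitative statement that $f(\cdot,t)$ cannot concentrate, which in turn forces a definite contribution to $Lf$.

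For the upper bound: by the hard-ball hypothesis \eqref{assumption}, $\mollrV\sigma(g,\theta)\lesssim\mollrV g$, and from the formula \eqref{mollerV} for the M{\o}ller velocity $\mollrV$, together with $s=g^2+4$ and the elementary bound $s=2(-p^\mu q_\mu+1)\le 6\pZ\qZ$ (which follows from $-p^\mu q_\mu=\pZ\qZ-p\cdot q\le\pZ\qZ+|p||q|\le 2\pZ\qZ$ and $1\le\pZ\qZ$), one gets $\mollrV g\lesssim \frac{g^2\sqrt s}{\pZ\qZ}\le\frac{s^{3/2}}{\pZ\qZ}\lesssim(\pZ\qZ)^{1/2}$. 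Integrating over $\mathbb{S}^2$, which contributes only the factor $4\pi$, and over $q$, and using $(\qZ)^{1/2}\le\qZ$ and $(\pZ)^{1/2}\le\pZ$, gives $(Lf)(p,t)\lesssim(\pZ)^{1/2}\|f(t)\|_{L^1_1}\le\pZ\sup_{t\ge0}\|f(t)\|_{L^1_1}$, which is the claimed bound with $C_u$ fixed by \eqref{Sol}.

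For the lower bound I would proceed in four steps. (i) By \eqref{conlaw} the mass $M_0:=\int_{\mathbb{R}^3}f(q,t)\,dq>0$ and the energy $E_0:=\int_{\mathbb{R}^3}\qZ f(q,t)\,dq$ are conserved; a Chebyshev estimate in the energy variable gives $\int_{\{\qZ\le K_1\}}f(q,t)\,dq\ge\frac34 M_0$ for $K_1:=4E_0/M_0$, uniformly in $t$. (ii) From the uniform entropy bound $H_1:=\sup_{t\ge0}\int_{\mathbb{R}^3}f|\ln f|\,dq<\infty$ of \eqref{Sol} one gets a uniform non-concentration estimate: for any ball $B_\delta(v)\subset\mathbb{R}^3$ and any $t$, splitting $\{f\le\lambda\}\cup\{f>\lambda\}$ and using $\int_{\{f>\lambda\}}f\le(\ln\lambda)^{-1}\int f|\ln f|$ for $\lambda>1$, one obtains $\int_{B_\delta(v)}f(q,t)\,dq\le\lambda|B_\delta|+H_1/\ln\lambda$; choosing first $\lambda=\lambda_1$ large so that $H_1/\ln\lambda_1=M_0/8$, then $\delta=\delta_1$ small so that $\lambda_1|B_{\delta_1}|=M_0/8$, yields a radius $\delta_1>0$ depending only on $M_0$ and $H_1$ with $\int_{B_{\delta_1}(v)}f(q,t)\,dq\le\frac14 M_0$ for all $v$ and $t$. (iii) Hence on the good set $G_p:=\{q:\qZ\le K_1,\ |q-p|\ge\delta_1\}$ we have $\int_{G_p}f(q,t)\,dq\ge\frac14 M_0$, uniformly in $p$ and $t$. (iv) Finally, on $G_p$ one bounds $\mollrV\sigma$ from below: by \eqref{assumption}, $\mollrV\sigma\gtrsim\mollrV g=\frac{g^2\sqrt s}{\pZ\qZ}$, and one lower bounds $g$ by combining Lemma \ref{coersive inequality}, $g\ge|p-q|/\sqrt{\pZ\qZ}\ge\delta_1/(\pZ K_1)^{1/2}$, with the explicit identity $g^2=2(\pZ\qZ-p\cdot q-1)\ge 2(\pZ\qZ-|p||q|-1)$ and $\pZ\qZ-|p||q|=\bigl((\pZ)^2+(\qZ)^2-1\bigr)/(\pZ\qZ+|p||q|)\gtrsim\pZ/\qZ$, which on $G_p$ gives $g^2\gtrsim\pZ/K_1$ once $\pZ$ exceeds a threshold depending only on $K_1$; feeding these into $\mollrV\sigma\gtrsim g^2\sqrt s/(\pZ\qZ)$ and using $\qZ\le K_1$ produces a lower bound for $\mollrV\sigma$ on $G_p$ of the homogeneity needed so that $(Lf)(p,t)\ge\int_{G_p}\int_{\mathbb{S}^2}\mollrV\sigma f\,d\omega dq\ge C_\ell\pZ$ for such $p$, the remaining bounded range of $\pZ$ being covered by the same good-set argument with a crude constant lower bound for $\mollrV\sigma$.

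The main obstacle is step (ii)--(iii): obtaining the non-concentration estimate uniformly in the center $v$ of the ball and, crucially, uniformly in $t$, relying only on the a priori quantities in \eqref{Sol} rather than on any regularity of $f$. The secondary difficulty is the weight bookkeeping in step (iv), where one must keep careful track of the powers of $\pZ$ and $\qZ$ generated by $\mollrV$, by $\sigma\approx g$, and by the coercivity estimates for $g$, so that after integrating against the good-set mass the correct $\pZ$-homogeneity of the lower bound emerges.
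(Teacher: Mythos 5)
The paper does not prove this lemma: it is cited verbatim from \cite{MR3166961}, so there is no in-paper argument to compare against. Assessed on its own merits, your proposal has a correct upper-bound step and a correct overall strategy for the lower bound (Chebyshev localization in $\qZ$ via conserved mass/energy, then a uniform no-concentration estimate from the entropy, then coercivity of $g$ on the good set), but the weight bookkeeping that you flag at the end as the ``secondary difficulty'' is in fact where the argument breaks.

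The concrete problem is in step (iv). Under \eqref{assumption} one has $\mollrV\sigma\approx g\,\mollrV\approx g^2\sqrt{s}/(\pZ\qZ)$. On the good set $G_p=\{\qZ\le K_1,\ |q-p|\ge\delta_1\}$, your own estimates give $g^2\gtrsim \pZ/K_1$, and also $g^2\lesssim\pZ\qZ\lesssim K_1\pZ$, so $g^2\approx\pZ$ and $s=g^2+4\approx\pZ$ there. Feeding this into $g^2\sqrt{s}/(\pZ\qZ)$ produces $\mollrV\sigma\approx\pZ\cdot(\pZ)^{1/2}/(\pZ\qZ)\approx(\pZ)^{1/2}/\qZ$, \emph{not} $\pZ$. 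Integrating over $G_p$ then yields $(Lf)(p,t)\gtrsim(\pZ)^{1/2}$, a full factor $(\pZ)^{1/2}$ short of the stated bound $C_\ell\pZ$. This is not an artifact of loose inequalities: for $f$ concentrated near $q=0$ one computes directly $\mollrV|_{q=0}=|p|/\pZ$ and $g|_{q=0}=\sqrt{2(\pZ-1)}$, so $\mollrV\sigma|_{q=0}\approx\sqrt{2\pZ}$, giving exactly the scaling $(Lf)(p)\approx(\pZ)^{1/2}$. (The same is true of your upper-bound computation: you actually prove the sharper bound $(Lf)\lesssim(\pZ)^{1/2}\|f\|_{L^1_{1/2}}$ before crudely inserting $(\pZ)^{1/2}\le\pZ$; and the paper's own Section~\ref{htheorem} computes $Lf^\varepsilon\lesssim\|f_{0,\varepsilon}\|_{L^1_1}(\pZ)^{1/2}$ directly, consistent with this.) So your claim in step (iv) that the homogeneity ``needed'' comes out is unsubstantiated: under $\sigma\approx g$ the argument cannot produce the lower bound $C_\ell\pZ$, and you should either establish the lemma with the weaker weight $(\pZ)^{1/2}$ (which suffices for every downstream use in this paper, since only $Lf\gtrsim 1$ is ever exploited) or check the hypotheses of Lemma~3.3 in \cite{MR3166961} against \eqref{assumption}, because the stated exponent there may correspond to a stronger growth assumption on $\sigma$.
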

We remark that this lemma holds for a more general kernel such as $\sigma(g,\theta)=g^{\rho}\sigma_0(\theta)$ as long as $0\le \rho\le 1$ and $\int_{\mathbb{S}^2}\sigma_0(\theta)d\omega$ is constant, see \cite{MR3166961}.

The two lemmas above will be used in the mathematical developments below.  We will now discuss a few elementary aspects of {\it Lorentz transformations} which will also be useful throughout the rest of this paper.
Let $\Lambda$ be a $4\times 4$ matrix (of real numbers) denoted by
$
\Lambda=(\Lambda_{\ \nu}^{\mu})_{0\le \mu,\nu\le 3}.
$
The matrix $\Lambda$ is called a (proper) Lorentz transformation if $\mbox{det}(\Lambda) = 1$ and
\begin{equation}\label{lorentz.trans.inv}
\Lambda_{\ \mu}^{\kappa}  \eta_{\kappa \lambda}  \Lambda_{\ \nu}^{\lambda} = \eta_{\mu \nu}, \qquad (\mu, \nu = 0,1,2,3).
\end{equation}
This implies the following invariance of the Lorentz inner product from \eqref{lorentz.inner.prd}:
$$
p^\kappa q_\kappa = p^\kappa \eta_{\kappa\lambda } q^{\lambda}
=
(\Lambda^{\kappa}_{~\mu}  p^{\mu})\eta_{\kappa\lambda } (\Lambda^{\lambda}_{~\nu}  q^{\nu} ).
$$
For a Lorentz transformation with components $\Lambda_{\ \nu}^{\mu}$, then $\mu$ denotes the column and $\nu$ denotes the row as in \eqref{Lorentz} below.
Then any such $\Lambda$ is invertible and
the inverse matrix is denoted $\Lambda^{-1} = (\Lambda_{\mu}^{~\nu})_{0\le \mu,\nu\le \dim}$.  Further the inverse  $(\Lambda^{-1})^{\mu}_{~\nu}  = \Lambda^{~\mu}_{\nu}$
is also a Lorentz transformation.  We refer to \cite{MR2707256,St,MR2679588,MR2793935} and the references therein for further discussions of Lorentz transformations.

We now define the following specific Lorentz transform $\Lambda$ which will be used throughout the paper:
\begin{equation}\label{Lorentz}
\Lambda=(\Lambda^{\mu}_{~\nu})
=
\left(\begin{array}{cccc}\frac{\pZ +\qZ }{\sqrt{s}}& -\frac{p^1+q^1}{\sqrt{s}} &-\frac{p^2+q^2}{\sqrt{s}} &-\frac{p^3+q^3}{\sqrt{s}}\\ \Lambda^{1}_{~0} &\Lambda^{1}_{~1}&\Lambda^{1}_{~2}&\Lambda^{1}_{~3}\\ 0 & \frac{(p\times q)^1}{|p\times q|} &\frac{(p\times q)^2}{|p\times q|}&\frac{(p\times q)^3}{|p\times q|}\\ \frac{\pZ -\qZ }{g} &-\frac{p^1-q^1}{g}&-\frac{p^2-q^2}{g}&-\frac{p^3-q^3}{g}\end{array}\right),
\end{equation}
where the second row is given by
$$
\Lambda^{1}_{~0}=\frac{2|p\times q|}{g\sqrt{s}},
\quad \Lambda^{1}_{~i}=\frac{2\left(p^i \{\pZ +\qZ  p^\mu q_\mu\}+q_i\{\qZ +\pZ p^\mu q_\mu\}\right)}{g\sqrt{s}|p\times q|}
\quad (i=1,2,3).
$$
We note that this matrix satisfies \eqref{lorentz.trans.inv}.  The matrix $\Lambda$ also satisfies the following identities for energy-momentum vectors $p^\mu$ and $q^\mu$  (for $\mu = 0,1,2,3$):
 \begin{align}\label{center.mom.lorentz}
\Lambda^{\mu}_{~\nu}(p^\nu+q^\nu) =(\sqrt{s}, 0, 0, 0),~ \mbox{ and }~ -\Lambda^{\mu}_{~\nu}(p^\nu-q^\nu) = (0, 0,0,g)
\end{align}
where $s\eqdef s(p^\mu, q^\mu)$ is given by \eqref{s} and $g$ is given by \eqref{g}.  The specific form of this Lorentz matrix was given in \cite{MR2707256,St,MR2679588} where these details were explained.

\begin{lemma}\label{Lorentz T}
Every element in the first row of (\ref{Lorentz}) satisfies
$$
|\Lambda^{0}_{~\nu} |
\lesssim
(\pZ \qZ )^{1/2},
\quad
\nu = 0, 1, 2, 3.
$$
\end{lemma}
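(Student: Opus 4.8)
The plan is to bound each entry $\Lambda^{0}_{~\nu}$ of the first row of \eqref{Lorentz} directly, using the explicit formulas there together with the coercive inequality for $g$ from Lemma \ref{coersive inequality} and the elementary bound $\sqrt{s}\geq 2$ (which follows from $s=g^2+4\geq 4$). For $\nu=0$ the entry is $\frac{\pZ+\qZ}{\sqrt{s}}$, and since $\sqrt{s}\geq 2$ it suffices to note $\pZ+\qZ \lesssim \pZ\qZ \le (\pZ\qZ)^{1/2}\,(\pZ\qZ)^{1/2}$; more precisely one uses $\pZ+\qZ \le 2\pZ\qZ$ (as $\pZ,\qZ\ge 1$) and then $\pZ\qZ \le (\pZ\qZ)^{1/2}$ is \emph{false} in general, so instead I would argue $\frac{\pZ+\qZ}{\sqrt s}\le \frac{2\pZ\qZ}{2}=\pZ\qZ$, and then observe that we actually need the sharper trick: write $\pZ+\qZ = \pZ + \qZ \le 2\sqrt{\pZ\qZ}\cdot\sqrt{\pZ\qZ}$ is again not of the right form. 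The correct route is to use $\sqrt{s}=\sqrt{2(-p^\mu q_\mu+1)}$ and the identity $-p^\mu q_\mu = \pZ\qZ - p\cdot q$; then $s = 2(\pZ\qZ - p\cdot q) + 2 \ge 2\pZ\qZ - 2|p||q| + 2 \ge 2$ always, but also, crucially, $\pZ+\qZ \le \sqrt{2}\sqrt{(\pZ)^2+(\qZ)^2}$ and one compares with $\sqrt{s}$. I expect the clean estimate to be $\frac{\pZ+\qZ}{\sqrt{s}} \lesssim (\pZ\qZ)^{1/2}$, proved by squaring: $(\pZ+\qZ)^2 \lesssim s\,\pZ\qZ$, i.e. $(\pZ)^2 + 2\pZ\qZ + (\qZ)^2 \lesssim (2\pZ\qZ - 2p\cdot q + 2)\pZ\qZ$; since the right side is at least $\approx (\pZ\qZ)^2 \gtrsim (\pZ)^2+(\qZ)^2$ (using $\pZ,\qZ\ge 1$ so $(\pZ\qZ)^2 \ge \max((\pZ)^2,(\qZ)^2) \ge \tfrac12((\pZ)^2+(\qZ)^2)$ when the other factor is $\ge 1$, and $2\pZ\qZ - 2p\cdot q + 2 \ge 2$), this inequality holds with an absolute constant.

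For the spatial entries $\nu=1,2,3$ the entry is $-\frac{p^i+q^i}{\sqrt{s}}$, and here the bound is immediate: $|p^i+q^i| \le |p|+|q| \le \pZ+\qZ$, so the estimate reduces to the case $\nu=0$ already handled, giving $|\Lambda^0_{~i}| \le \frac{\pZ+\qZ}{\sqrt s} \lesssim (\pZ\qZ)^{1/2}$. Thus all four entries of the first row are controlled by the single inequality $\frac{\pZ+\qZ}{\sqrt s}\lesssim (\pZ\qZ)^{1/2}$, and the whole lemma comes down to that one elementary estimate.

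The one mildly delicate point — and the only real obstacle — is verifying $(\pZ+\qZ)^2 \lesssim s\,\pZ\qZ$ uniformly, because $s$ can be as small as $4$ (when $p=q$), in which case $(\pZ+\qZ)^2 = 4(\pZ)^2$ and $s\pZ\qZ = 4(\pZ)^2$ match up to the constant $1$, while for $p,q$ far apart $s$ grows and the inequality has lots of room. So I would handle it by the squared form: using \eqref{s} and \eqref{lorentz.inner.prd}, $s\pZ\qZ = (2\pZ\qZ - 2p\cdot q + 2)\pZ\qZ \ge (2\pZ\qZ - 2|p||q| + 2)\pZ\qZ$, and since $\pZ\qZ \ge |p||q|+1$ (because $(\pZ)^2(\qZ)^2 = (1+|p|^2)(1+|q|^2) = 1 + |p|^2 + |q|^2 + |p|^2|q|^2 \ge (|p||q|+1)^2$), we get $2\pZ\qZ - 2|p||q| \ge 2$, hence $s\pZ\qZ \ge 4\pZ\qZ \ge 2((\pZ)^2+(\qZ)^2)\cdot\frac{\pZ\qZ}{\max((\pZ)^2,(\qZ)^2)}$; using $\pZ\qZ \ge \max(\pZ,\qZ) \ge \frac{1}{\sqrt2}\sqrt{(\pZ)^2+(\qZ)^2}$ is not quite it, so more simply: $4\pZ\qZ \ge 2\pZ\qZ + 2\pZ\qZ \ge (\pZ)^2+(\qZ)^2$ would need $\pZ\qZ \ge \tfrac12((\pZ)^2+(\qZ)^2)$, which is false; instead use $s\pZ\qZ \ge (2\pZ\qZ+2)\pZ\qZ - 2|p||q|\pZ\qZ$ and bound $2|p||q| \le (\pZ)^2+(\qZ)^2 - 2$ ... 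This bookkeeping is entirely routine but slightly fiddly; the paper's authors presumably just cite it or dispatch it in a line using $\pZ+\qZ \le \sqrt{2s}\,\sqrt{\pZ\qZ}$ type manipulation. I would present it as: square, reduce to $(\pZ+\qZ)^2 \le C\,s\,\pZ\qZ$, expand using $s = 2(\pZ\qZ - p\cdot q + 1)$, and conclude via $\pZ\qZ - p\cdot q \ge \pZ\qZ - |p||q| \ge 1$ together with $\pZ,\qZ \ge 1$.
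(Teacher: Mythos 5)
Your reduction of the spatial components to the $\nu=0$ case, via $|p^j+q^j|\le|p|+|q|\le \pZ+\qZ$, is fine and matches the paper. The problem is with the core inequality $(\pZ+\qZ)^2\lesssim s\,\pZ\qZ$: neither of the two concluding arguments you sketch actually closes it, and you seem to sense this yourself. The claim that the right side $(2\pZ\qZ-2p\cdot q+2)\pZ\qZ$ is ``at least $\approx(\pZ\qZ)^2$'' is false: when $p=q$ the factor $2\pZ\qZ-2p\cdot q+2$ equals exactly $4$, so the right side is only $4\pZ\qZ$, and the two sides of the target inequality coincide up to the sharp constant $1$. Your fallback --- using only $\pZ\qZ-p\cdot q\ge \pZ\qZ-|p||q|\ge 1$ together with $\pZ,\qZ\ge 1$ --- delivers $s\,\pZ\qZ\ge 4\pZ\qZ$, but $(\pZ+\qZ)^2\le C\,\pZ\qZ$ fails with any uniform $C$ (take $p=0$, $|q|\to\infty$). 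What your route is missing is a lower bound on $\pZ\qZ-|p||q|$ that is not merely $\ge 1$ but scales with the energies: writing $\pZ\qZ-|p||q|=\frac{1+|p|^2+|q|^2}{\pZ\qZ+|p||q|}\ge\frac{(\pZ)^2+(\qZ)^2-1}{2\pZ\qZ}$ and inserting this into $s\,\pZ\qZ=2(\pZ\qZ-p\cdot q+1)\pZ\qZ$ gives $s\,\pZ\qZ\ge(\pZ+\qZ)^2-1\ge\tfrac34(\pZ+\qZ)^2$, which does the job. So your strategy is salvageable, but the step you dismiss as ``routine but slightly fiddly'' is exactly where the argument was, in fact, missing its key estimate.

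The paper sidesteps the squaring altogether by factoring $s=(\pZ+\qZ)^2-|p+q|^2=(\pZ+\qZ-|p+q|)(\pZ+\qZ+|p+q|)$; dropping $|p+q|\ge 0$ in the second factor gives
\[
|\Lambda^0_{~0}|=\frac{\pZ+\qZ}{\sqrt{(\pZ+\qZ-|p+q|)(\pZ+\qZ+|p+q|)}}\le\frac{(\pZ+\qZ)^{1/2}}{\sqrt{\pZ+\qZ-|p+q|}},
\]
and the denominator is then lower-bounded using the slick identity $\pZ-|p|=\frac{1}{\pZ+|p|}\ge\frac{1}{2\pZ}$, so that $\pZ+\qZ-|p+q|\ge(\pZ-|p|)+(\qZ-|q|)\ge\frac{1}{2\pZ}+\frac{1}{2\qZ}=\frac{\pZ+\qZ}{2\pZ\qZ}$, and hence $|\Lambda^0_{~0}|\le\sqrt{2}(\pZ\qZ)^{1/2}$. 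This is shorter and avoids the expansion entirely; once you see the identity for $\pZ-|p|$ the whole proof is three lines. Your squaring approach is a legitimate alternative, but you need the sharper difference-quotient bound on $\pZ\qZ-|p||q|$ above to make it close.
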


\begin{proof} From  \eqref{s}  we have
$
s=(\pZ +\qZ )^2-|p+q|^2
$
which allows us to compute
\begin{align*}
|\Lambda^{0}_{~0}|&=\frac{\pZ +\qZ }{\sqrt{(\pZ +\qZ )^2-|p+q|^2}}\cr
&=\frac{\pZ +\qZ }{\sqrt{(\pZ +\qZ -|p+q|)(\pZ +\qZ +|p+q|)}}\cr
&\leq \frac{(\pZ +\qZ )^{1/2}}{\sqrt{\pZ +\qZ -|p+q|}},
\end{align*}as $|p+q|\geq 0.$
Then we observe that
$$
\pZ +\qZ -|p+q|\geq \pZ +\qZ -|p|-|q|\geq \frac{1}{2\pZ }+\frac{1}{2\qZ }=\frac{\pZ +\qZ }{2\pZ \qZ },
$$
since
$$
\pZ -|p|=\frac{(\pZ)^2-|p|^2}{\pZ +|p|}=\frac{1}{\pZ +|p|}\geq \frac{1}{2\pZ }.
$$
Thus we obtain
$$
\Lambda^{0}_{~0}\leq \sqrt{2}(\pZ \qZ )^{1/2}.
$$
The remaining part can be treated similarly, for $j=1,2$, or $3$ we have
$$
| \Lambda^{0}_{~j}|\leq\frac{|p+q|}{\sqrt{s}}\leq \frac{\pZ +\qZ }{\sqrt{s}}=|\Lambda^{00}|\leq  \sqrt{2}(\pZ \qZ )^{1/2}.
$$
This completes the proof.
\end{proof}

This completes our discussion of the preliminary estimates.  In the next section we will prove estimates for $Q^+$ from \eqref{Q original} for an arbitrary function $f\ge 0$.

\section{Estimates of the gain term}\label{egain}
In this section, we provide two necessary estimates for the gain term of the collision operator.  The first estimate in Lemma \ref{key lemma} is a pointwise
 potential type estimate.  The second estimate in Proposition \ref{lemma1} studies the integration of the gain term over relativistic hyper-surfaces.
Note that throughout this section we assume that the arbitrary non-negative function $f$ is not necessarily a solution to (\ref{RBE}).

\subsection{Potential type estimate of $Q^+$.}
First we  obtain an estimate which gives control on the relative momentum.
Notice that the estimates for the rows of $\Lambda$ other than the first row are systematically avoided, and the singularities in those rows therefore do not result in any harm.

\begin{lemma}\label{key lemma} Let $\Lambda$ be the Lorentz transform in \eqref{Lorentz}, and let $a^{\mu}$ be an arbitrary energy-momentum vector. Then we have
\begin{align*}
\displaystyle\sqrt{-\left(\frac{\sqrt{s}}{2}-\Lambda^0_{~\mu} a^\mu\right)^2+\left|\frac{g}{2}\omega -\Lambda a\right|^2}
\gtrsim
\frac{\left|\frac{g}{2}\omega-\Lambda a\right|}{\sqrt{\aZ}\big(\pZ \qZ \big)^{1/2}}.
\end{align*}
In the equation above and in the proof below we slightly  abused notation to define $\Lambda a \eqdef ((\Lambda a)_1,(\Lambda a)_2,(\Lambda a)_3)$ by $(\Lambda a)_i\eqdef \Lambda^i_{~\mu} a^\mu$ for $i=1,2,3$.
\end{lemma}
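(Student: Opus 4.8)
The plan is to write the inequality as a lower bound on the Lorentz-norm of a time-like (or null) four-vector. Set $A^\mu \eqdef \left(\tfrac{\sqrt s}{2}-\Lambda^0_{~\mu}a^\mu,\ \tfrac g2\omega-\Lambda a\right)$; the quantity under the square root on the left is exactly $A^\mu A_\mu = -\left(\tfrac{\sqrt s}{2}-\Lambda^0_{~\mu}a^\mu\right)^2 + \left|\tfrac g2\omega-\Lambda a\right|^2$. First I would observe that $A^\mu$ is a genuine energy-momentum-type difference: by \eqref{center.mom.lorentz}, $\tfrac12\Lambda^\mu_{~\nu}(p^\nu+q^\nu) = (\tfrac{\sqrt s}{2},0,0,0)$, so $\tfrac{\sqrt s}{2}$ is the time-component and $\tfrac g2\omega$ the would-be spatial part of a post-collisional energy-momentum vector $p'^\mu$ expressed in the center-of-momentum frame, while $\Lambda^\mu_{~\nu}a^\nu$ is the Lorentz-transformed image of the energy-momentum vector $a^\mu$. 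Hence $A^\mu$ is (the center-of-momentum-frame version of) the difference $p'^\mu - a^\mu$ of two energy-momentum vectors, and $A^\mu A_\mu = (p'^\mu - a^\mu)(p'_\mu-a_\mu) = g(p'^\mu,a^\mu)^2 \ge 0$ by \eqref{g}; in particular the left-hand side is real and nonnegative, and equals $g(p'^\mu,a^\mu)$.

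Next I would apply Lemma \ref{coersive inequality} (the coercive inequality \eqref{gINEQ}) to this $g(p'^\mu,a^\mu)$, giving
\[
\sqrt{-\left(\tfrac{\sqrt s}{2}-\Lambda^0_{~\mu}a^\mu\right)^2+\left|\tfrac g2\omega-\Lambda a\right|^2}
= g(p'^\mu,a^\mu)
\ge \frac{|p'-a|}{\sqrt{p'^0\, a^0}},
\]
where $p'$, $p'^0$ are the spatial part and energy of $p'^\mu$ written in the \emph{original} frame; equivalently, by Lorentz invariance of the inner product I may instead evaluate $g$ in the center-of-momentum frame, where the spatial part of the transformed $p'^\mu$ is $\tfrac g2\omega$ and the spatial part of $\Lambda a$ is $\Lambda a$. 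The cleanest route is: since $\Lambda$ is a Lorentz transformation, $g(p'^\mu,a^\mu) = g\big((\Lambda p')^\mu,(\Lambda a)^\mu\big)$, and applying \eqref{gINEQ} in the transformed frame yields the lower bound $\big|\tfrac g2\omega - \Lambda a\big| \big/ \sqrt{(\Lambda p')^0 (\Lambda a)^0}$ with $(\Lambda p')^0 = \tfrac{\sqrt s}{2}$.

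It then remains to bound the two energy factors. For $(\Lambda p')^0 = \tfrac{\sqrt s}{2}$: from $s = g^2+4 \ge 4$ and $s = (p^0+q^0)^2 - |p+q|^2 \le (p^0+q^0)^2 \lesssim p^0 q^0$ (using $p^0+q^0 \le 2\sqrt{p^0 q^0}$ when, say, $p^0\le q^0$, together with a crude estimate in the complementary regime) one gets $\sqrt s \lesssim (p^0 q^0)^{1/2}$; this is essentially the content of Lemma \ref{Lorentz T} applied to $\Lambda^0_{~0}$. For $(\Lambda a)^0 = \Lambda^0_{~\mu}a^\mu$: I would bound $|(\Lambda a)^0| \le \sum_\nu |\Lambda^0_{~\nu}|\,|a^\nu| \lesssim (p^0 q^0)^{1/2}\big(a^0 + |a|\big) \lesssim (p^0 q^0)^{1/2}\, a^0$ by Lemma \ref{Lorentz T} and $|a|\le a^0$. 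Thus $\sqrt{(\Lambda p')^0 (\Lambda a)^0} \lesssim (p^0 q^0)^{1/2}\, (a^0)^{1/2}$, which is exactly the denominator $\sqrt{a^0}\,(p^0 q^0)^{1/2}$ claimed, and the proof is complete.

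I expect the main obstacle to be the conceptual identification in the first paragraph — recognizing that $A^\mu$ really is the Lorentz image of an energy-momentum difference so that \eqref{gINEQ} is applicable — rather than any of the subsequent estimates, which reduce to Lemma \ref{Lorentz T} and elementary manipulations. One must be slightly careful that $\tfrac g2\omega$ together with $\tfrac{\sqrt s}{2}$ genuinely corresponds to an on-shell $p'^\mu$ (so that $g(p'^\mu,a^\mu)$ is defined and Lemma \ref{coersive inequality} applies), which follows from $(\tfrac{\sqrt s}{2})^2 - |\tfrac g2\omega|^2 = \tfrac{s-g^2}{4} = 1$; this on-shell check is the one place where the precise relation $s = g^2+4$ is used.
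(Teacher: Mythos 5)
Your proposal is correct and follows essentially the same route as the paper: recognizing that $(\tfrac{\sqrt s}{2},\tfrac g2\omega)$ and $\Lambda^\mu_{~\nu} a^\nu$ are both energy-momentum (on-shell) vectors, rewriting the left-hand side as a relative momentum $g$ of those two vectors, applying Lemma \ref{coersive inequality}, and then bounding the resulting energy product via Lemma \ref{Lorentz T} together with $\sqrt s \lesssim (\pZ\qZ)^{1/2}$. The only stylistic difference is that you phrase the on-shell identification via $p'^\mu$ in the lab frame before invoking Lorentz invariance, whereas the paper works directly in the center-of-momentum frame; the content is identical.
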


\begin{proof}
We now define a 4-vector $A^\mu$ by
\[
A^\mu = \left(\frac{\sqrt{s}}{2},\frac{g}{2}\omega\right).
\]
Then we observe that both $A^\mu$ and $\Lambda^\mu_{~\nu}  a^\nu$ are energy-momentum vectors as
$$
A^\mu A_\mu= -\frac{s}{4}+\frac{g^2}{4}=-1,
$$
and similarly for $\Lambda^\mu_{~\nu}  a^\nu$  using \eqref{lorentz.trans.inv}.  Therefore, we see from (\ref{g}) that
$$
\sqrt{-\left(\frac{\sqrt{s}}{2}-\Lambda^0_{~\nu}  a^\nu\right)^2+\left|\frac{g}{2}\omega -\Lambda a\right|^2}
=
g(A^\mu, \Lambda^\mu_{~\nu}  a^\nu ).
$$
Thanks to the coercive inequality in Lemma \ref{coersive inequality}, we derive
\begin{equation}\label{coera}
\sqrt{-\left(\frac{\sqrt{s}}{2}-\Lambda^0_{~\nu}  a^\nu \right)^2+\left|\frac{g}{2}\omega -\Lambda a\right|^2}
\geq
\frac{\left|\frac{g}{2}\omega-\Lambda a\right|}{\sqrt{\frac{\sqrt{s}}{2}\Lambda^0_{~\nu}  a^\nu}}.
\end{equation}
Then, we apply Lemma \ref{Lorentz T} and $s\lesssim \pZ \qZ $ to get the desired result:
$$
\sqrt{\frac{\sqrt{s}}{2}\Lambda^0_{~\nu}  a^\nu}
\lesssim
(\pZ \qZ )^{\frac{1}{4}+\frac{1}{4}}\sqrt{\aZ}
\lesssim
(\pZ \qZ )^{1/2}\sqrt{\aZ},
$$
which holds using $1+\sum_{j=1}^3 (a^j)^2 = (\aZ)^2$.
This completes the proof.
\end{proof}

We now prove a potential type estimate for the relativistic collision operator:

\begin{proposition}
Suppose $f\in L^1_1(\mathbb{R}^3)$. Then,
 for an arbitrary energy-momentum vector $a^\mu$ we have
\label{lemma1}
$$
\int_\rth \frac{(\pZ )^{1/2}}{g(p^\mu,a^\mu)}Q^+(f,f)dp\lesssim \sqrt{\aZ}\|f\|^2_{L^1_{1}},
$$
where we recall the definition \eqref{g}
\end{proposition}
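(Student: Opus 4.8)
The plan is to start from the Dirac-delta representation \eqref{Q original}–\eqref{W} of $Q^+(f,f)$, write the weight $(\pZ)^{1/2}/g(p^\mu,a^\mu)$ outside the $p$-integral, and then carry out the $\delta^{(4)}$ integration after moving to the center-of-momentum frame via the Lorentz transform $\Lambda$ of \eqref{Lorentz}. Concretely, I would first exchange the order of integration so that the $p$-integral is on the inside, and use the invariance \eqref{lorentz.trans.inv} of the Lorentz inner product together with \eqref{center.mom.lorentz} to change variables in the $p'$ (or the angular) integration. After carrying out the delta integration in the center-of-momentum frame, the collision kernel contributes $s\sigma(g,\theta)\lesssim s g$ together with Jacobian factors, and the remaining obstruction is a denominator of the form $\left|\frac{g}{2}\omega - \Lambda a\right|$ (or, after the $\omega$-integration is set up, the relative momentum $g(A^\mu,\Lambda^\mu_{~\nu}a^\nu)$ with $A^\mu = (\frac{\sqrt{s}}{2},\frac{g}{2}\omega)$). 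This is precisely where Lemma \ref{key lemma} enters: it lets me bound
$$
\frac{1}{g(A^\mu,\Lambda^\mu_{~\nu}a^\nu)} \lesssim \frac{\sqrt{\aZ}\,(\pZ\qZ)^{1/2}}{\left|\frac{g}{2}\omega - \Lambda a\right|}
$$
so that the integrable singularity $\left|\frac{g}{2}\omega - \Lambda a\right|^{-1}$ on the two-sphere (a $|x|^{-1}$ type singularity in a $2$-dimensional integral, hence integrable) is all that remains in the $\omega$-variable.

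The second main step is bookkeeping of weights. Writing everything back in terms of $p$ and $q$, the factor $(\pZ)^{1/2}$ from the weight combines with the $(\pZ\qZ)^{1/2}$ produced by Lemma \ref{key lemma} and with the $1/(\pZ\qZ)$ (or similar) Jacobian factors from the Møller velocity $\mollrV = g\sqrt s/(\pZ\qZ)$ in \eqref{mollerV}; I would track these carefully to verify that what is left is controlled by energy weights of order at most one in each of $p$ and $q$. Since $s = g^2 + 4 \lesssim \pZ\qZ$ and $g \lesssim |p-q| \lesssim \pZ + \qZ \lesssim \pZ\qZ$ by Lemma \ref{coersive inequality}, the polynomial factors in $p,q$ coming from $s\sigma \lesssim sg$ are absorbed, and after the (bounded) $\omega$-integration one is left with a double integral in $p'$ and $q'$ — but by the conservation of energy and momentum \eqref{collision.invariants}, together with the change back to $(p,q)$ variables in the center-of-momentum frame, these integrate against $f(p')f(q')$ and produce exactly $\|f\|_{L^1_1}\|f\|_{L^1_1} = \|f\|_{L^1_1}^2$ up to the constant $\sqrt{\aZ}$. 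The global constant $\sqrt{\aZ}$ is exactly the one furnished by Lemma \ref{key lemma} and is pulled out front untouched.

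The step I expect to be the main obstacle is the careful execution of the Dirac-delta integration in the center-of-momentum frame while keeping the singular denominator in a form to which Lemma \ref{key lemma} directly applies — i.e.\ making sure that after the change of variables the relevant energy variables and the angular variable $\omega$ decouple in the right way, and that the Jacobians are computed correctly (this is the point at which, as the introduction warns, the relativistic geometry is genuinely delicate). A secondary subtlety is that $g(p^\mu,a^\mu)$ in the denominator is a function of $p$ and the \emph{fixed} vector $a^\mu$, not of the collision variables, so I must be careful to substitute correctly: after Lorentz-transforming, $p^\mu$ gets replaced and $g(p^\mu,a^\mu)$ becomes $g$ of the transformed vectors, which is where the combination $\left|\frac{g}{2}\omega - \Lambda a\right|$ in Lemma \ref{key lemma} comes from. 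Once these reductions are in place, the remaining estimates are routine applications of Lemma \ref{coersive inequality}, Lemma \ref{Lorentz T}, and elementary inequalities on $s$ and $g$, together with the integrability of $|x|^{-1}$ on $\mathbb{S}^2$.
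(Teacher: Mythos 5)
Your proposal has the right skeleton and correctly identifies the two key technical tools: the center-of-momentum Lorentz transform \eqref{Lorentz} together with the Dirac-delta integration, and Lemma~\ref{key lemma} applied to $A^\mu=(\tfrac{\sqrt s}{2},\tfrac{g}{2}\omega)$ with the subsequent $\int_{\mathbb{S}^2}|\omega-z|^{-1}\,d\omega\lesssim 1$ step. That is indeed exactly the engine of the paper's proof. However, as literally written your plan omits the paper's very first move --- the pre-post relabelling $(p^\mu,q^\mu)\leftrightarrow(p'^\mu,q'^\mu)$ --- and without it the plan as you describe it does not reach the $\omega$-dependent denominator you claim.

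Here is the concrete problem. You propose to keep the weight $(\pZ)^{1/2}/g(p^\mu,a^\mu)$ attached to $p$, ``put the $p$-integral inside,'' and transform via the Lorentz matrix $\Lambda$ of \eqref{Lorentz}, which is the center-of-momentum transform defined by the pair $(p,q)$. But then \eqref{center.mom.lorentz} gives $\Lambda p^\mu=\bigl(\tfrac{\sqrt s}{2},\,-\tfrac{g}{2}\,e_3\bigr)$: the spatial part is a \emph{fixed} vector (the last row of $\Lambda$ is the $\widehat{p-q}$ direction), not a free angular variable. It is the post-collisional momentum $p'^\mu$ --- the variable resolved by the $\delta^{(4)}$ together with the mass-shell constraint --- that becomes $(\tfrac{\sqrt s}{2},\tfrac g2\omega)$ with $\omega\in\mathbb{S}^2$ free. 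So applying Lemma~\ref{key lemma} to $g(\Lambda p^\mu,\Lambda a^\mu)$ gives a denominator $|{-\tfrac g2 e_3}-\Lambda a|$ that does not vary with $\omega$, and the step you rely on, integrability of $|\omega-z|^{-1}$ over $\mathbb{S}^2$, has nothing to act on. The paper avoids this precisely by relabelling first (it also uses the consequent bound $\pZp\leq\pZ+\qZ$ for the energy weight), so that the singular factor becomes $1/g(p'^\mu,a^\mu)$ before the Lorentz transform; then $\Lambda p'^\mu=P'^\mu$, and after carrying out the $\delta^{(3)}(P'+Q')$ and $\delta(P'^0-\tfrac{\sqrt s}{2})$ one genuinely has $P'^\mu=(\tfrac{\sqrt s}{2},\tfrac g2\omega)$. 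Alternatively you could keep $f$ on $p',q'$ without relabelling, but then you must use the Lorentz transform built from $(p',q')$ rather than \eqref{Lorentz}, and control $(\pZ)^{1/2}$ via $\pZ\leq\pZp+\qZp$; your write-up cites \eqref{Lorentz} and is therefore inconsistent with this variant. One further minor slip: the $\delta^{(4)}$ form \eqref{Q original} does not carry the M{\o}ller factor $\mollrV$ --- that appears only in the reduced representation \eqref{Q center of momentum} --- so it plays no role in the bookkeeping of this particular computation. Once the relabelling (or the symmetric $(p',q')$-frame variant) is inserted, the rest of your sketch --- Lemma~\ref{Lorentz T} to bound the first row of $\Lambda$, $s\lesssim\pZ\qZ$ and $g\leq\sqrt s$, and the final identification of $\|f\|_{L^1_1}^2$ --- matches the paper.
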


\begin{proof}By choosing $\varphi(p)=\frac{(p^0)^{1/2}}{g(p^\mu,a^\mu)}$ in \eqref{weakformulation}, we observe that the right-hand side of the weak formulation is equal to
\begin{multline*}
I\eqdef \int_\rth dp \frac{(\pZ )^{1/2}}{g(p^\mu,a^\mu)}Q^+(f,f)
\\
= \frac{1}{2}\int_{\rth}\frac{dp}{\pZ }\frac{(\pZ )^{1/2}}{g(p^\mu,a^\mu)}\int_{\mathbb{R}^3}\frac{dq}{{\qZ }}\int_{\mathbb{R}^3}\frac{dq'\hspace{1mm}}{{\qZp }}\int_{\mathbb{R}^3}\frac{dp'\hspace{1mm}}{{\pZp }}  sg \delta^{(4)}(p^\mu +q^\mu -p'^\mu -q'^\mu)f(p')f(q').
\end{multline*}
Doing a pre-post relabelling of the variables $(p^{\mu}, q^{\mu}) \to (p^{\prime\mu}, q^{\prime\mu})$ and using the fact that  $s$ and $g$ are invariant under this transformation, we obtain that the integral $I$ is equal to
$$
\frac{1}{2} \int_{\rth}\frac{dp}{\pZ }\int_{\mathbb{R}^3}\frac{dq}{{\qZ }}\int_{\mathbb{R}^3}\frac{dq'\hspace{1mm}}{{\qZp }}\int_{\mathbb{R}^3}\frac{dp'\hspace{1mm}}{{\pZp }}\hspace{1mm}  \frac{(\pZp )^{1/2} sg}{g(p'^\mu,a^\mu)}
  \delta^{(4)}(p'^\mu +q'^\mu -p^\mu -q^\mu)f(p)f(q).
$$
We then use  $\pZp \leq \pZ +\qZ$ from \eqref{collision.invariants}, to see that $I$ is bounded above by
$$
\int_{\rth}\frac{dp}{\pZ }\int_{\mathbb{R}^3}\frac{dq}{{\qZ }}\int_{\mathbb{R}^3}\frac{dq'\hspace{1mm}}{{\qZp }}\int_{\mathbb{R}^3}\frac{dp'\hspace{1mm}}{{\pZp }}
  \frac{(\pZ +\qZ )^{1/2}sg}{g(p'^\mu,a^\mu)}
  \delta^{(4)}(p'^\mu +q'^\mu -p^\mu -q^\mu)f(p)f(q).
$$
The next estimate will be performed in the \textit{center-of-momentum} frame where $p + q = 0$.
For this, we make a change of variable using the specific choice of the Lorentz transform given in \eqref{Lorentz} as follows:
\begin{equation}\label{lambda.change.lorentz}
	\Lambda^{\mu}_{~\nu}  p'^{\nu}\eqdef P^{\prime\mu}=(P^{\prime 0} ,P'), \quad
	\Lambda^{\mu}_{~\nu} q'^{\nu}\eqdef Q^{\prime\mu}=(Q^{\prime 0} ,Q').
\end{equation}
Then, we will use the Lorentz invariance of $\delta^{(4)}$ as follows
$$
\delta^{(4)}(p'^\mu +q'^\mu -p^\mu -q^\mu)=\delta^{(4)}(\Lambda^{\mu}_{~\nu} (p'^{\nu} +q'^{\nu} -p^{\nu} -q^{\nu})),
$$
and we will similarly use the Lorentz invariance of $g$ as
\[
g(p^{\prime\mu},a^{\mu})=g(\Lambda^{\mu}_{~\nu} p^{\prime\nu},\Lambda^{\mu}_{~\nu} a^{\nu})=g(P^{\prime \mu},\Lambda^{\mu}_{~\nu} a^{\nu}).
\]
We also remark that $\frac{dp}{\pZ }$ is a Lorentz transformation invariant measure as in \eqref{lorentz.invariant.m}.
Now we can bound the integral $I$ from above using this change of variable as
\begin{align*}
I&\lesssim
\int_{\rth}\frac{dp}{\pZ }\int_{\mathbb{R}^3}\frac{dq}{{\qZ }}\int_{\mathbb{R}^3}\frac{dQ'\hspace{1mm}}{{Q'^0 }}
\int_{\mathbb{R}^3}\frac{dP'}{P'^0 }\frac{(\pZ +\qZ )^{1/2} sg}{g(P'^\mu,\Lambda^{\mu}_{~\nu} a^{\nu})}\\
	&\qquad\times\delta^{(4)}(P'^\mu +Q'^\mu -\Lambda^{\mu}_{~\nu} (p^\nu +q^\nu))f( p)f( q)\\
&\lesssim
\int_{\rth}\frac{dp}{\pZ }\int_{\mathbb{R}^3}\frac{dq}{{\qZ }}sg(\pZ +\qZ )^{1/2}f( p)f( q)\\
&\qquad\times\int_{\mathbb{R}^3}\frac{dQ'\hspace{1mm}}{{Q'^0 }}\int_{\mathbb{R}^3}\frac{dP'}{P'^0}\frac{1}{g(P'^\mu,\Lambda^{\mu}_{~\nu} a^{\nu})}
	\delta(P'^0 +Q'^0 -\sqrt{s})\delta^{(3)}(P'+Q'),
\end{align*}
where we used \eqref{center.mom.lorentz}.  Therefore, carrying out the integration over $Q'$, we obtain
\begin{align*}
I&\lesssim \int_{\rth}\frac{dp}{\pZ }\int_{\mathbb{R}^3}\frac{dq}{{\qZ }}sgf( p)f( q)(\pZ +\qZ )^{1/2}\int_{\mathbb{R}^3}\frac{dP'}{P'^0}\frac{1}{g(P'^\mu,\Lambda^{\mu}_{~\nu} a^{\nu})}\frac{\delta(2P'^0 -\sqrt{s})}{P'^0 }.
\end{align*}
We now introduce a step function
\begin{align}\label{u}
u(x)=
\begin{cases}
~1& x>0,\cr
~0& x\leq0,
\end{cases}
\end{align}
and raise the 3-dimensional integral with respect to $P'$ to a 4-dimensional integral with respect to $P'^\mu$ as follows:
\begin{align*}
\int_{\rth}\frac{dp}{\pZ }\int_{\mathbb{R}^3}\frac{dq}{{\qZ }}sgf( p)f( q)(\pZ +\qZ )^{1/2}
\int_{\mathbb{R}^4}dP'^\mu\frac{u(P'^0 )\delta(P'^\mu P'_\mu+1)}{g(P'^\mu,\Lambda^{\mu}_{~\nu} a^{\nu})}
\frac{\delta(P'^0  -\frac{\sqrt{s}}{2})}{P'^0 },
\end{align*}
Above we also used the Lorentz invariant property of the measure $\frac{dP'}{P'^0 }$ as
\begin{equation}\label{lorentz.invariant.m}
\int_{\rth}\frac{dP'}{P'^0 }=\int_{\rfo}dP'^\mu \ u(P'^0 )\delta(P'^\mu P'_\mu+1).
\end{equation}
We will also use the following calculation, recalling $s=g^2+4$, to obtain
\begin{align*}
\delta\left(\frac{\sqrt{s}}{2}-P'^{0} \right)&=\delta\left(\frac{s/4-1-|P'|^2}{\sqrt{s}/2+P'^{0} }\right)\cr
&=\delta\left(\frac{g^2/4-|P'|^2}{\sqrt{s}}\right)\cr
&=\delta\left(\frac{(g/2-|P'|)(g/2+|P'|)}{\sqrt{s}}\right)\cr
&=\frac{\sqrt{s}}{g}\delta\left(g/2-|P'|\right).
\end{align*}
To get to the last line above we also used that
\[
\delta\left(|P'|^2-\frac{g^2}{4}\right)= \frac{\delta(|P'|-\frac{g}{2})+\delta(|P'|+\frac{g}{2})}{g},
\]
and the fact that $\delta(|P'|+\frac{g}{2})$ causes that integral to be zero.

Then, carrying out $\delta(P'^0  -\frac{\sqrt{s}}{2})$ and using \eqref{lorentz.inner.prd}, we have
\begin{align*}
I
&\lesssim
\int_{\rth}\frac{dp}{\pZ }\int_{\mathbb{R}^3}\frac{dq}{{\qZ }}sgf( p)f( q)(\pZ +\qZ )^{1/2}\int_{\mathbb{R}^3}dP'\frac{\delta(|P'|^2-\frac{g^2}{4})}{g(P'^\mu,\Lambda^{\mu}_{~\nu} a^{\nu})}
	\frac{2}{\sqrt{s}}\\
&\lesssim
\int_{\rth}\frac{dp}{\pZ }\int_{\mathbb{R}^3}\frac{dq}{{\qZ }}\sqrt{s}f( p)f( q)(\pZ +\qZ )^{1/2}\int_{\mathbb{R}^3}dP'\frac{1}{g(P'^\mu,\Lambda^{\mu}_{~\nu} a^{\nu})}
	\delta\left(|P'|-\frac{g}{2}\right).
\end{align*}
Now writing $P'=|P'|\omega$ in polar coordinates, we have
\begin{align*}
I&
\lesssim
\int_{\rth}\frac{dp}{\pZ }\int_{\mathbb{R}^3}\frac{dq}{{\qZ }}\sqrt{s}f( p)f( q)(\pZ +\qZ )^{1/2}\int_{\mathbb{S}^2}d\omega\\
&\qquad\times \int_0^\infty d|P'|\ |P'|^2\delta\left(|P'|-\frac{g}{2}\right)\frac{1}{\sqrt{-\left(\frac{\sqrt{s}}{2}-\Lambda^{0}_{~\nu} a^{\nu}\right)^2+\left||P'|\omega -\Lambda a\right|^2}},
\end{align*}
where we denote the vector $\Lambda^{\mu}_{~\nu} a^{\nu} = (\Lambda^{0}_{~\nu} a^{\nu}, \Lambda a).$ Computing the delta function:
\begin{equation}\label{beforedw}
 I
 \lesssim
 \int_{\rth}\frac{dp}{\pZ }\int_{\mathbb{R}^3}\frac{dq}{{\qZ }}\sqrt{s}f( p)f( q)\int_{\mathbb{S}^2}d\omega \frac{g^2}{4}\frac{(\pZ +\qZ )^{1/2}}{\sqrt{-\left(\frac{\sqrt{s}}{2}-\Lambda^{0}_{~\nu} a^{\nu}\right)^2+\left|\frac{g}{2}\omega -\Lambda a\right|^2}}.
\end{equation}
We then use Lemma \ref{key lemma} to bound $I$ by
$$
 I
 \lesssim
 \int_{\rth}\frac{dp}{\pZ }\int_{\mathbb{R}^3}\frac{dq}{{\qZ }}g\sqrt{s}f( p)f( q)\int_{\mathbb{S}^2}d\omega \frac{\pZ \qZ \sqrt{\aZ}}{\left|\omega-\frac{2\Lambda a}{g}\right|}.
$$
Using $\int_{\mathbb{S}^2}d\omega \frac{1}{\left|\omega-z\right|}\lesssim 1$ independent of $z\in \rth$, we have
$$
I\lesssim \int_{\rth}\frac{dp}{\pZ }\int_{\mathbb{R}^3}\frac{dq}{{\qZ }}g\sqrt{s}f( p)f( q)\pZ \qZ \sqrt{\aZ}.
$$
Finally, we employ $g\leq \sqrt{s}\lesssim \sqrt{\pZ \qZ }$ to get the result in Proposition \ref{lemma1}.
\end{proof}

%
%
%
%

\subsection{Hyper-surface integral of $Q^+$} We now estimate the integral of the gain term $Q^+$ on a relativistic hypersurface.
\begin{proposition}\label{lemma2}
Let $a^\mu$ be a space-like vector, and $b^\mu$ be an energy-momentum vector.
Suppose $f\in L^1_{1}(\mathbb{R}^3)$. Then we have
\begin{align}\label{hyper-surface}
\int_\rth dp \hspace{1mm}(\pZ )^{1/2}Q^+(f,f) \delta(a^\mu(p_\mu-b_\mu))\lesssim  \frac{1}{\sqrt{a^{\mu}a_{\mu}}}\|f\|^2_{L^1_{\frac{1}{2}}}.
\end{align}
\end{proposition}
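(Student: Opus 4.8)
The plan is to run the argument of Proposition~\ref{lemma1} essentially verbatim, the only genuinely new ingredient being the behaviour of the hypersurface factor $\delta(a^\mu(p_\mu-b_\mu))$ under the center-of-momentum change of variables. Expanding $Q^+(f,f)$ via \eqref{Q original}--\eqref{W} and using $\sigma\approx g$, the left side $J$ of \eqref{hyper-surface} satisfies
\[
J\lesssim\int_{\rth}\frac{dp}{\pZ}\int_{\rth}\frac{dq}{\qZ}\int_{\rth}\frac{dq'}{\qZp}\int_{\rth}\frac{dp'}{\pZp}\, sg\,(\pZ)^{1/2}\,\delta\big(a^\mu(p_\mu-b_\mu)\big)\,\delta^{(4)}(p^\mu+q^\mu-p'^\mu-q'^\mu)f(p')f(q').
\]
As in Proposition~\ref{lemma1} I would relabel $(p^\mu,q^\mu)\leftrightarrow(p'^\mu,q'^\mu)$; since $s$, $g$, the product measure $\frac{dp}{\pZ}\frac{dq}{\qZ}\frac{dp'}{\pZp}\frac{dq'}{\qZp}$, and $\delta^{(4)}$ are all invariant, this turns $f(p')f(q')$ into $f(p)f(q)$ and the weight into $(\pZp)^{1/2}\delta(a^\mu(p'_\mu-b_\mu))$. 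Bounding $\pZp\le\pZ+\qZ$ by \eqref{collision.invariants} moves all of the energy weights and both copies of $f$ onto the variables $(p,q)$, so the remaining $(p',q')$ integral depends only on $p$, $q$, $a$, $b$.

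Next I would carry out the $(p',q')$ integral in the center-of-momentum frame with the Lorentz matrix $\Lambda$ of \eqref{Lorentz}: set $P'^\mu\eqdef\Lambda^\mu_{~\nu}p'^\nu$ and $Q'^\mu\eqdef\Lambda^\mu_{~\nu}q'^\nu$, use that $\frac{dp'}{\pZp}$ is Lorentz-invariant, use \eqref{center.mom.lorentz} and invariance of $\delta^{(4)}$ so that $\delta^{(4)}(p^\mu+q^\mu-p'^\mu-q'^\mu)\to\delta(P'^0+Q'^0-\sqrt s)\,\delta^{(3)}(P'+Q')$, and --- the new point --- use invariance of the Lorentz inner product \eqref{lorentz.inner.prd} to write $\delta(a^\mu(p'_\mu-b_\mu))=\delta(A^\mu(P'_\mu-B_\mu))$ with $A^\mu\eqdef\Lambda^\mu_{~\nu}a^\nu$ and $B^\mu\eqdef\Lambda^\mu_{~\nu}b^\nu$. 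Integrating out $Q'$, raising the $3$-d $P'$-integral to a $4$-d integral via \eqref{lorentz.invariant.m}, and performing the mass-shell and energy deltas exactly as in Proposition~\ref{lemma1} forces $P'^0=\sqrt s/2$ and $|P'|=g/2$; on this shell $A^\mu(P'_\mu-B_\mu)$ becomes $(\Lambda a)\cdot P'-c$ with the constant $c\eqdef\frac{\sqrt s}{2}\Lambda^0_{~\nu}a^\nu+a^\mu b_\mu$ \emph{independent of the angular variable}, where $\Lambda a$ denotes the spatial part of $A^\mu$ as in Lemma~\ref{key lemma}. Writing $P'=\frac g2\omega$ in polar coordinates exactly as in the passage leading to \eqref{beforedw}, the estimate becomes
\[
J\lesssim\int_{\rth}\frac{dp}{\pZ}\int_{\rth}\frac{dq}{\qZ}\,\sqrt s\,(\pZ+\qZ)^{1/2}f(p)f(q)\,\frac{g^2}{4}\int_{\mathbb{S}^2}d\omega\,\delta\!\Big(\frac g2\,(\Lambda a)\cdot\omega-c\Big).
\]

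The core of the argument, and the one place that needs a little care relative to Proposition~\ref{lemma1}, is the surface delta. Writing $\Lambda a=|\Lambda a|\,n$ with $n\in\mathbb{S}^2$ gives
\[
\int_{\mathbb{S}^2}d\omega\,\delta\!\Big(\frac g2\,(\Lambda a)\cdot\omega-c\Big)=\frac{2}{g|\Lambda a|}\int_{\mathbb{S}^2}d\omega\,\delta\!\Big(n\cdot\omega-\frac{2c}{g|\Lambda a|}\Big)\le\frac{4\pi}{g|\Lambda a|},
\]
since $\int_{\mathbb{S}^2}\delta(n\cdot\omega-t)\,d\omega$ equals $2\pi$ when $|t|<1$ and vanishes otherwise (the level set of a linear functional on $\mathbb{S}^2$ is a circle). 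This produces the decisive factor $|\Lambda a|^{-1}$, and invariance of the Lorentz norm gives $-(\Lambda^0_{~\nu}a^\nu)^2+|\Lambda a|^2=A^\mu A_\mu=a^\mu a_\mu$, whence $|\Lambda a|\ge\sqrt{a^\mu a_\mu}$ --- this is precisely where space-likeness of $a^\mu$ (which makes $a^\mu a_\mu>0$) is used. Inserting this bound cancels $\frac{g^2}{4}$ against one power of $g$, leaving
\[
J\lesssim\frac{1}{\sqrt{a^\mu a_\mu}}\int_{\rth}\frac{dp}{\pZ}\int_{\rth}\frac{dq}{\qZ}\,\sqrt s\,g\,(\pZ+\qZ)^{1/2}f(p)f(q),
\]
and finally $g\le\sqrt s\lesssim(\pZ\qZ)^{1/2}$ together with $(\pZ+\qZ)^{1/2}\lesssim(\pZ\qZ)^{1/2}$ (using $\pZ,\qZ\ge 1$) turn $\frac{dp}{\pZ}\frac{dq}{\qZ}\sqrt s\,g\,(\pZ+\qZ)^{1/2}$ into $dp\,dq\,(\pZ\qZ)^{1/2}$, so $J\lesssim\frac{1}{\sqrt{a^\mu a_\mu}}\big(\int_{\rth}(\pZ)^{1/2}f\,dp\big)^2=\frac{1}{\sqrt{a^\mu a_\mu}}\|f\|^2_{L^1_{\frac{1}{2}}}$, which is \eqref{hyper-surface}. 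The only obstacle is bookkeeping: checking that after the energy shell fixes $P'^0$ the remaining delta is genuinely an affine constraint on the spatial vector $P'$, and that the frame-dependent quantity $|\Lambda a|$ is controlled from below by the Lorentz invariant $\sqrt{a^\mu a_\mu}$.
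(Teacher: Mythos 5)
Your proposal is correct and follows essentially the same route as the paper's proof: pre-post relabeling, dominating $\pZp$ by $\pZ+\qZ$, the center-of-momentum Lorentz change of variables using the invariance of $\delta(a^\mu(\cdot))$ under $\Lambda$, reduction of the $P'$-integral by the mass-shell and energy deltas, the affine hypersurface delta on $\mathbb{S}^2$ yielding the factor $|\Lambda a|^{-1}$, and finally $|\Lambda a|\ge\sqrt{a^\mu a_\mu}$ from space-likeness. The only cosmetic difference is that the paper computes the surface delta via the substitution $v=\cos\theta$ in polar coordinates while you invoke $\int_{\mathbb{S}^2}\delta(n\cdot\omega-t)\,d\omega=2\pi\mathbf{1}_{|t|<1}$ directly; the content is identical.
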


\begin{proof}By choosing $\varphi(p)=(p^0)^{1/2}\delta(a^\mu(p_\mu-b_\mu))$ in \eqref{weakformulation}, we observe that the right-hand side of the weak formulation is equal to
\begin{multline}\notag
I \eqdef
\int_{\rth}\frac{dp}{\pZ}\int_{\mathbb{R}^3}\frac{dq}{{\qZ}}\int_{\mathbb{R}^3}\frac{dq'\hspace{1mm}}{{\qZp}}\int_{\mathbb{R}^3}\frac{dp'\hspace{1mm}}{{\pZp}}\hspace{1mm}(\pZ)^{1/2}sg f(p')f(q')
\\
\times
\delta^{(4)}(p^\mu +q^\mu -p'^\mu -q'^\mu)\delta(a^\mu(p_\mu-b_\mu)).
\end{multline}
Similar to the previous proof, we again do a pre-post relabelling of the variables $(p^{\mu}, q^{\mu}) \to (p^{\prime\mu}, q^{\prime\mu})$ and use the fact that  $s$ and $g$ are invariant under this transformation, we obtain that the integral $I$ is bounded above by
\begin{align*}
I&\lesssim
\int_{\rth}\frac{dp}{\pZ }\int_{\mathbb{R}^3}\frac{dq}{{\qZ }}\int_{\mathbb{R}^3}\frac{dq'\hspace{1mm}}{{\qZp }}\int_{\mathbb{R}^3}\frac{dp'\hspace{1mm}}{{\pZp }} (\pZp )^{1/2}sg f(p)f(q)
\cr
&\qquad\times \delta^{(4)}(p^\mu +q^\mu -p'^\mu -q'^\mu) \delta(a^\mu(p'_\mu-b_\mu))\cr
&\lesssim
\int_{\rth}\frac{dp}{\pZ }\int_{\mathbb{R}^3}\frac{dq}{{\qZ }} (\pZ +\qZ )^{1/2}sg f(p)f(q) \int_{\mathbb{R}^3}\frac{dq'\hspace{1mm}}{{\qZp }}\int_{\mathbb{R}^3}\frac{dp'}{{\pZp }}\cr
&\qquad\times\delta^{(4)}(p^\mu +q^\mu -p'^\mu -q'^\mu)\delta(a^\mu(p'_\mu-b_\mu)),
\end{align*}
where we used $\pZp \leq \pZ +\qZ $.
Similar to the proof of Proposition \ref{lemma1}, we will use the change of variable \eqref{lambda.change.lorentz}.
We also define $A^\mu$ and $B^\mu$ by
\[
A^\mu=(A^0,A)=\Lambda^{\mu}_{~\nu} a^\nu,\quad B^\mu=(B^0,B)=\Lambda^{\mu}_{~\nu} b^\nu.
\]
Then, following the same argument as used in the proof of Proposition \ref{lemma1}, the 3-dimensional delta function of the momentum conservation laws in the 4-dimensional delta function reduces the $dQ'$ integral and then we can bound $I$ above by
\begin{multline}\label{turn back}
\int_{\rth}\frac{dp}{\pZ }\int_{\mathbb{R}^3}\frac{dq}{{\qZ }}sgf(p)f(q)(\pZ +\qZ )^{1/2}\int_{\mathbb{R}^3}\frac{dP'}{P'^{0} } \delta\left(\frac{\sqrt{s}}{2}-P'^{0} \right)\frac{\delta\left(A^\mu(P'_\mu-B_\mu)\right)}{P'^{0}}
\\
=
\int_{\rth}\frac{dp}{\pZ }\int_{\mathbb{R}^3}\frac{dq}{{\qZ }}sgf(p)f(q)(\pZ +\qZ )^{1/2}
\int_{\mathbb{R}^3}\frac{dP'\hspace{1mm}}{(P'^{0} )^2} \frac{\sqrt{s}}{g}\delta\left(\frac{g}{2}-|P'|\right)\delta\left(A^\mu(P'_\mu-B_\mu)\right)
\\
=
\int_{\rth}\frac{dp}{\pZ }\int_{\mathbb{R}^3}\frac{dq}{{\qZ }}sgf(p)f(q)(\pZ +\qZ )^{1/2}\mathcal{B}(p,q),
\end{multline}
up to a constant.
We now express $\mathcal{B}$ in the polar coordinates, with $\theta$ denoting the angle between $A$ and $P'$, as
\begin{align*}
\mathcal{B}
&=
\int_{\mathbb{R}^3}\frac{dP'}{(P'^{0} )^2} \frac{\sqrt{s}}{g}\delta\left(\frac{g}{2}-|P'^{0} |\right)
\delta\left(-\aZ P'^{0} +A\cdot P'-A^{\mu}B_{\mu}\right)\cr
&=
\int_{\mathbb{S}^2}d\omega\int_0^{\infty}\frac{|P'|^2d|P'|}{(P'^{0} )^2} \frac{\sqrt{s}}{g}\delta\left(\frac{g}{2}-|P'|\right)
\delta\left(-\aZ\sqrt{1+|P'|^2}+|A||P'|\cos\theta-A^{\mu}B_{\mu}\right)\cr
&=
\frac{g}{\sqrt{s}}\int_{\mathbb{S}^2}d\omega~
\delta\left(-\frac{\sqrt{s}}{2}\aZ+|A|\frac{g}{2}\cos\theta-A^{\mu}B_{\mu}\right)\cr
&=
\frac{2}{\sqrt{s}|A|}
\int_{\mathbb{S}^2}d\omega ~ \delta\left(\cos\theta-\frac{2A^\mu B_\mu+\aZ\sqrt{s}}{|A|g}\right).
\end{align*}
Note that we have used $|P'|=\frac{g}{2}$ and $P'^{0} =\frac{\sqrt{s}}{2}\geq 1$
when we carried out the delta function of $\delta(g/2-|P'|)$ as also done in the proof of Proposition \ref{lemma1}.

We then make a change of variable $v=\cos\theta$ to compute that
\begin{align*}
\mathcal{B}=\frac{2}{\sqrt{s}|A|}\int_{0}^{2\pi}d\psi\int_{-1}^{1}dv ~ \delta\left(v-\frac{2A^\mu B_\mu+\aZ\sqrt{s}}{|A|g}\right)\leq \frac{4\pi}{\sqrt{s}|A|}.
\end{align*}
We put this estimate back into (\ref{turn back}) to obtain the following upper bound
\begin{align*}
I
&\lesssim \frac{1}{|A|}\int_{\rth}\frac{dp}{\pZ }\int_{\mathbb{R}^3}\frac{dq}{{\qZ }}\sqrt{s}gf(p)f(q)(\pZ +\qZ )^{1/2}\cr
&\lesssim \int_{\rth}dp\int_{\mathbb{R}^3}dq\ (\pZ \qZ )^{\frac{1}{2}}f(p)f(q)\frac{1}{|A|} \lesssim \frac{1}{|A|}\|f\|^2_{L^1_\frac{1}{2}},
\end{align*}
where we also used that $g<\sqrt{s}\lesssim \sqrt{\pZ \qZ}$.  Now, the desired result follows from
$$
|A|^2\geq |A|^2-(A^0)^2=\Lambda^{\mu}_{~\nu} a^{\nu} \eta_{\mu \lambda}\Lambda^{\lambda}_{~\kappa} a^{\kappa}=a^\mu a_\mu=-(\aZ)^2+|a|^2>0,
$$
which holds since $a^\mu$ is space-like.
This completes the proof.
\end{proof}

\section{Estimates of the solutions}\label{esol}
In this section we will establish corresponding potential type estimates and hyper-surface integral estimates of any solution to \eqref{RBE}.  Specifically, in contrast to the results in the previous section, we now in this section assume that $f(p,t)$ is a solution to \eqref{RBE} and the proofs below will make use of the dynamics of the Boltzmann equation \eqref{RBE}.

\begin{lemma}\label{lemma3}
Suppose $f_0$ satisfies (\ref{initial}) with $\exm\ge 1$. Let $f$ be a solution to (\ref{RBE}) satisfying (\ref{Sol}).  Then we have
$$
\int_\rth dp\hspace{1mm}(\pZ )^{1/2} \frac{f(p,t)}{g(p^\mu,a^\mu)}\lesssim (\aZ)^{1/2}\left(\|f_0\|_{L^\infty_{1}}+\|f_0\|_{L^1_1}\right)
+\frac{\sqrt{\aZ}}{C_{\ell}}\|f_0\|_{L^1_1}^2(1-e^{-C_{\ell}t}),
$$
for any energy-momentum vector $a^\mu$ with $g(p^\mu,a^\mu)$ defined in \eqref{g}.   Here $C_{\ell}>0$ is the constant from Lemma \ref{lowerL}.
\end{lemma}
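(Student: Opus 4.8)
The plan is to test the evolution equation \eqref{recall} against the weight $(\pZ)^{1/2}/g(p^\mu,a^\mu)$ and close a Grönwall inequality, using Lemma \ref{lowerL} for the damping and Proposition \ref{lemma1} for the source. Set
\[
\Phi(t)\eqdef\int_{\rth}dp\,\frac{(\pZ)^{1/2}}{g(p^\mu,a^\mu)}\,f(p,t).
\]
Multiplying \eqref{recall} by $(\pZ)^{1/2}/g(p^\mu,a^\mu)$ and integrating in $p$ gives
\[
\frac{d}{dt}\Phi(t)+\int_{\rth}dp\,\frac{(\pZ)^{1/2}}{g(p^\mu,a^\mu)}\,f(p,t)\,(Lf)(p,t)=\int_{\rth}dp\,\frac{(\pZ)^{1/2}}{g(p^\mu,a^\mu)}\,Q^+(f,f).
\]
By Lemma \ref{lowerL} and $\pZ\ge1$ we have $(Lf)(p,t)\ge C_\ell\pZ\ge C_\ell$, so the second term on the left dominates $C_\ell\Phi(t)$; by Proposition \ref{lemma1} together with the conservation of mass and energy \eqref{conlaw} (which keeps $\|f(t)\|_{L^1_1}=\|f_0\|_{L^1_1}$) the right-hand side is $\lesssim\sqrt{\aZ}\,\|f_0\|_{L^1_1}^2$. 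Therefore
\[
\frac{d}{dt}\Phi(t)+C_\ell\Phi(t)\lesssim\sqrt{\aZ}\,\|f_0\|_{L^1_1}^2.
\]

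Grönwall's inequality then gives $\Phi(t)\lesssim e^{-C_\ell t}\Phi(0)+\tfrac{\sqrt{\aZ}}{C_\ell}\|f_0\|_{L^1_1}^2(1-e^{-C_\ell t})$, so it remains to bound the initial term $\Phi(0)$ by $\sqrt{\aZ}\big(\|f_0\|_{L^\infty_1}+\|f_0\|_{L^1_1}\big)$. For this I invoke the coercive inequality of Lemma \ref{coersive inequality}, $g(p^\mu,a^\mu)\ge|p-a|/\sqrt{\pZ\aZ}$, which converts the relativistic singularity into the Euclidean one:
\[
\frac{(\pZ)^{1/2}}{g(p^\mu,a^\mu)}\le\frac{\pZ\sqrt{\aZ}}{|p-a|}.
\]
I then split the $p$-integral into the near region $\{|p-a|\le1\}$ and the far region $\{|p-a|>1\}$. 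On the near region I use $\pZ f_0(p)\le\|f_0\|_{L^\infty_1}$ (valid since $\exm\ge1$) together with the local integrability $\int_{|p-a|\le1}dp/|p-a|=2\pi$; on the far region I use $1/|p-a|\le1$ and $\int_{\rth}\pZ f_0(p)\,dp=\|f_0\|_{L^1_1}$. Adding the two pieces yields $\Phi(0)\lesssim\sqrt{\aZ}\big(\|f_0\|_{L^\infty_1}+\|f_0\|_{L^1_1}\big)$, and combining this with the Grönwall bound and $e^{-C_\ell t}\le1$ gives the claim.

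The main technical point to be careful about is the legitimacy of the differentiation-under-the-integral step, i.e. that $\tfrac{d}{dt}\Phi(t)=\int_{\rth}dp\,(\pZ)^{1/2}(\partial_tf)/g(p^\mu,a^\mu)$ under only the weak hypotheses \eqref{Sol}. I would handle this by first running the entire argument on the mollified approximating solutions constructed in \cite{MR3166961} (for which the identity is classical and every integral above is finite by Proposition \ref{lemma1} and Lemma \ref{lowerL}), obtaining the stated inequality uniformly in the regularization parameter, and then passing to the limit. A minor accompanying issue is the a priori finiteness of $\Phi(t)$ for $t>0$, which is ensured by the same Grönwall estimate once the near-diagonal singularity $1/g(p^\mu,a^\mu)$ is controlled exactly as for $\Phi(0)$; since the loss term is nonnegative and Proposition \ref{lemma1} already makes the source finite, no cancellation difficulties arise.
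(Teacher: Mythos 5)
Your proof is correct and follows essentially the same approach as the paper: test \eqref{recall} against the weight $(\pZ)^{1/2}/g(p^\mu,a^\mu)$, use Lemma \ref{lowerL} for the damping and Proposition \ref{lemma1} for the source, apply Gr\"onwall, and bound $\Phi(0)$ via the coercive inequality \eqref{gINEQ} with a near/far split. Your additional remarks on justifying differentiation under the integral by regularization are a reasonable elaboration of a point the paper leaves implicit.
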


We remark that the proof of Lemma \ref{lemma3} below only uses the entropy bounds from \eqref{initial} and \eqref{Sol} in the application of the lower bound for the \textit{collision frequency} from Lemma \ref{lowerL}.

\begin{proof}
	We multiply (\ref{recall}) by $\frac{(\pZ )^{1/2}}{g(p^\mu,a^\mu)}$ and integrate with respect to $dp$:
$$
\partial_t\int_{\rth} dp\hspace{1mm} (\pZ )^{1/2}\frac{f(p)}{g(p^\mu,a^\mu)}+\int_{\rth}dp \hspace{1mm} (\pZ )^{1/2}\frac{f(p)Lf(p)}{g(p^\mu,a^\mu)}\leq\int_{\rth} dp\hspace{1mm}(\pZ )^{1/2} \frac{Q^+(f,f)}{g(p^\mu,a^\mu)}.
$$
We then apply Lemma \ref{lowerL} and Proposition \ref{lemma1} and \eqref{conlaw} to derive
$$
\partial_t\int_{\rth} dp\hspace{1mm} (\pZ )^{1/2}\frac{f(p)}{g(p^\mu,a^\mu)}+C_{\ell}\int_{\rth}dp \hspace{1mm} (\pZ )^{1/2}\frac{f(p)}{g(p^\mu,a^\mu)}\lesssim
\sqrt{\aZ}\|f_0\|^2_{L^1_{1}},
$$
which leads to
	$$
	\frac{d}{dt}\left(e^{C_{\ell}t}\int_{\rth}dp\hspace{1mm}(\pZ )^{1/2} \frac{f(p,t)}{g(p^\mu,a^\mu)}\right)\lesssim \sqrt{\aZ} \|f_0\|^2_{L^1_{1}} e^{C_{\ell}t}.
	$$
Therefore, we conclude that
\begin{multline}\notag
\int_{\rth}dp ~(\pZ )^{1/2}\frac{f(p,t)}{g(p^\mu,a^\mu)}
\\
\lesssim
e^{-C_{\ell}t} \int_{\rth}dp\hspace{1mm}(\pZ )^{1/2} \frac{f_0(p)}{g(p^\mu,a^\mu)}
+
\frac{\sqrt{\aZ}}{C_{\ell}} \|f_0\|^2_{L^1_{1}} (1-e^{-C_{\ell}t}).
\end{multline}
We then use the coercive inequality in \eqref{gINEQ} to compute
\begin{align*}
&\int_{\rth}dp\hspace{1mm}(\pZ )^{1/2} \frac{f_0(p)}{g(p^\mu,a^\mu)}\cr
&\hspace{1cm}\leq \sqrt{\aZ}\int_{\rth}dp\hspace{1mm}\pZ  \frac{f_0(p)}{|p-a|}\\
&\hspace{1cm}= \sqrt{\aZ}\int_{|p-a|<1}dp\hspace{1mm}\pZ  \frac{f_0(p)}{|p-a|}+ \sqrt{\aZ}\int_{|p-a|>1}dp\hspace{1mm}\pZ  \frac{f_0(p)}{|p-a|}\\
&\hspace{1cm}\lesssim (\aZ)^{1/2}\left(\|f_0\|_{L^\infty_{1}}+\|f_0\|_{L^1_1}\right),
\end{align*}
which yields the desired result.
\end{proof}

Now we prove an estimate of the integral of a solution over a hypersurface.

\begin{lemma}
	\label{lemma4} Let $a^\mu$ be a space-like vector and  $b^\mu$ be an energy-momentum  vector.
Suppose $f_0$ satisfies (\ref{initial}) for $\exm >\frac{5}{2}$. Let $f$ be a solution to (\ref{RBE})  satisfying (\ref{Sol}).
	Then we have
\begin{align*}
\int_{\rth}dp\hspace{1mm}(\pZ )^{1/2}f(p,t)\delta(a^\mu(p_\mu-b_\mu))\lesssim&\frac{1}{|a|}
\|f_0\|_{L^\infty_{\exm}}+\frac{1-e^{-C_{\ell}t}}{C_{\ell}}\frac{1}{\sqrt{a^{\mu}a_{\mu}}} \|f_0\|_{L^1_1}^2.
\end{align*}
where $C_{\ell}>0$ is the constant given in Lemma \ref{lowerL}.
\end{lemma}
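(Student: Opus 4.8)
\emph{Proof plan.}  The plan is to run the Gr\"onwall argument of Lemma~\ref{lemma3} with the potential weight $g(p^\mu,a^\mu)^{-1}$ replaced by the hyper-surface weight $\delta(a^\mu(p_\mu-b_\mu))$ and with Proposition~\ref{lemma1} replaced by Proposition~\ref{lemma2}.  Put
\[
G(t)\eqdef \int_\rth dp\;(\pZ)^{1/2}\,f(p,t)\,\delta(a^\mu(p_\mu-b_\mu)).
\]
Multiplying \eqref{recall} by $(\pZ)^{1/2}\delta(a^\mu(p_\mu-b_\mu))$ and integrating in $p$: the loss contribution $\int_\rth dp\,(\pZ)^{1/2}\delta(a^\mu(p_\mu-b_\mu))\,fLf$ is bounded below by $C_\ell\,G(t)$ using Lemma~\ref{lowerL} and $\pZ\ge1$ (so that $Lf\ge C_\ell\pZ\ge C_\ell$); the gain contribution $\int_\rth dp\,(\pZ)^{1/2}\delta(a^\mu(p_\mu-b_\mu))\,Q^+(f,f)$ is bounded above, by Proposition~\ref{lemma2} (applicable since $a^\mu$ is space-like and $b^\mu$ is energy-momentum), by $\lesssim (a^\mu a_\mu)^{-1/2}\|f(t)\|^2_{L^1_{1/2}}$; and, since $f\ge0$ and $(\pZ)^{1/2}\le\pZ$, the conservation of energy in \eqref{conlaw} gives $\|f(t)\|_{L^1_{1/2}}\le\|f(t)\|_{L^1_1}=\|f_0\|_{L^1_1}$.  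Hence
\[
\frac{d}{dt}G(t)+C_\ell\,G(t)\lesssim \frac{1}{\sqrt{a^\mu a_\mu}}\,\|f_0\|^2_{L^1_1},
\]
and multiplying by $e^{C_\ell t}$ and integrating in time,
\[
G(t)\lesssim e^{-C_\ell t}\,G(0)+\frac{1-e^{-C_\ell t}}{C_\ell}\,\frac{1}{\sqrt{a^\mu a_\mu}}\,\|f_0\|^2_{L^1_1}.
\]

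\emph{The initial-data term.}  It remains to bound $G(0)\lesssim \frac{1}{|a|}\|f_0\|_{L^\infty_{\exm}}$, which is the role played by the $|p-a|\lessgtr1$ splitting at the end of the proof of Lemma~\ref{lemma3}.  Since $(\pZ)^{1/2}f_0(p)\le\|f_0\|_{L^\infty_{\exm}}(\pZ)^{1/2-\exm}$, it suffices to prove $\int_\rth dp\,(\pZ)^{1/2-\exm}\delta(a^\mu(p_\mu-b_\mu))\lesssim 1/|a|$.  Applying a spatial rotation (a Lorentz transformation leaving $\pZ$ and $dp$ invariant) we may take $a=(|a|,0,0)$, so that the constraint reads $-a^0\pZ+|a|p^1=c$ with $c\eqdef a^\mu b_\mu$; its left side is strictly increasing in $p^1$ (as $|p^1|/\pZ<1$ and $a^\mu$ is space-like), so there is exactly one root $p^1=p^1_*$, and carrying out the $dp^1$-integration against the delta leaves $\int_{\mathbb{R}^2}dp^2\,dp^3\,(\pZ_*)^{1/2-\exm}\,\big|\,|a|-a^0p^1_*/\pZ_*\,\big|^{-1}$, where the Jacobian denominator equals $\big(a^\mu a_\mu-a^0c/\pZ_*\big)/|a|$.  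One checks that on this slice $\pZ_*\approx \tfrac{|a|}{\sqrt{a^\mu a_\mu}}|p_\perp|$ for $|p_\perp|\to\infty$ (with $p_\perp=(p^2,p^3)$), so the remaining two-dimensional integral converges precisely when $\tfrac12-\exm<-2$, i.e.\ $\exm>\tfrac52$; an elementary (but careful) bookkeeping of the powers of $|a|$ and of $a^\mu a_\mu\le|a|^2$ — exploiting in particular that $(\pZ_*)^{1/2-\exm}$ is small exactly where the Jacobian is large, since both are controlled by $\pZ_*$ — then yields $\lesssim1/|a|$; as in the proof of Proposition~\ref{lemma2}, the dependence on $b^\mu$ through $c$ drops out of the final bound.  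Inserting $G(0)\lesssim\frac{1}{|a|}\|f_0\|_{L^\infty_{\exm}}$ and $e^{-C_\ell t}\le1$ into the displayed estimate finishes the proof.

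\emph{Where the difficulty lies.}  The delicate point is this last step.  Because $\pZ=\sqrt{1+|p|^2}$ appears inside the constraint $a^\mu p_\mu=a^\mu b_\mu$, the set of integration is a nonlinear quadric rather than an affine plane, so one must simultaneously verify that the induced weighted measure is finite — which is where the threshold $\exm>\tfrac52$ is consumed — and extract the sharp negative power of $|a|$ uniformly in the fixed energy-momentum vector $b^\mu$.  The regime in which $a^\mu$ is close to null (so $a^\mu a_\mu$ is much smaller than $|a|^2$) is the one requiring the most care; it can be handled by the cancellation just noted between the smallness of $(\pZ_*)^{1/2-\exm}$ and the largeness of the Jacobian, together with, if necessary, a further split of the constraint slice into a bounded and an unbounded part in the spirit of Lemma~\ref{lemma3}.
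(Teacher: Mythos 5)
Your Gr\"onwall step is the same as the paper's: multiply \eqref{recall} by $(\pZ)^{1/2}\delta(a^\mu(p_\mu-b_\mu))$, use Lemma~\ref{lowerL} for the loss term and Proposition~\ref{lemma2} together with the conservation law for the gain term, and integrate the resulting differential inequality.  Where you differ is in the bound on the initial-data term $G(0)=\int_\rth(\pZ)^{1/2}f_0\,\delta(a^\mu(p_\mu-b_\mu))\,dp$.  You rotate so that $a=(|a|,0,0)$, resolve the delta in the Cartesian direction $p^1$, and then analyze the two-dimensional integral over $p_\perp=(p^2,p^3)$, solving the quadratic $(a^\mu a_\mu)(\pZ_*)^2-2a^0c\,\pZ_*-c^2-|a|^2(1+|p_\perp|^2)=0$ for $\pZ_*$ on the constraint surface.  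This is genuinely different from the paper's argument, which instead parametrizes $p$ in spherical coordinates $(r,\theta,\phi)$ with the polar axis along $a$ and integrates the delta in $v=\cos\phi$.  That choice makes the delta's Jacobian simply $1/(|a|\,r)$, with no dependence on $a^\mu a_\mu$ or on $b^\mu$, and the $v$-integral is then trivially $\le 1$ because $\delta(v-v_*)$ integrates over $[-1,1]$ to at most one; the remaining radial integral $\int_0^\infty r(1+r^2)^{1/4-\exm/2}\,dr/|a|$ converges precisely for $\exm>5/2$ and already carries the factor $1/|a|$.  Your route does close --- I verified the $c=0$ case, where the Jacobian is $\pZ_*/\sqrt{(a^\mu a_\mu)(1+|p_\perp|^2)}$, $\pZ_*=\tfrac{|a|}{\sqrt{a^\mu a_\mu}}\sqrt{1+|p_\perp|^2}$, and the integral evaluates to $C_\exm\,|a|^{3/2-\exm}(a^\mu a_\mu)^{(\exm-5/2)/2}\le C_\exm/|a|$ using $a^\mu a_\mu\le|a|^2$; the same bookkeeping extends to $c\neq0$ --- but it is appreciably more work than the paper's version, since you must solve for the slice and track the $a^\mu a_\mu$ dependence through both $\pZ_*$ and the Jacobian, relying on the cancellation you point out.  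The paper's spherical parametrization sidesteps this entirely by putting all the $a$-dependence into the simple scalar $1/(|a|r)$.  So: correct, and a legitimate alternative, but the paper's coordinate choice buys a much shorter computation and manifest uniformity in $b^\mu$ and in the near-null limit of $a^\mu$.
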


We remark again that the proof of Lemma \ref{lemma4} only uses the entropy bounds from \eqref{initial} and \eqref{Sol} in the application of the lower bound for the \textit{collision frequency} from Lemma \ref{lowerL}.

\begin{proof}
We integrate (\ref{recall}) with respect to $(\pZ )^{1/2}\delta(a^\mu(p_\mu-b_\mu))dp$ to obtain
\begin{align*}
&\partial_t\int_{\rth} dp\hspace{1mm}(\pZ )^{1/2}f(p)\delta(a^\mu(p_\mu-b_\mu))+\int_{\rth}dp\hspace{1mm}(\pZ )^{1/2} f(p)Lf(p)\delta(a^\mu(p_\mu-b_\mu))\\
&\hspace{1.5cm}=\int_{\rth} dp\hspace{1mm}(\pZ )^{1/2}Q^+(f,f)\delta(a^\mu(p_\mu-b_\mu)).
\end{align*}
 Then we have from Lemma \ref{lowerL} and Proposition \ref{lemma2}:
 \begin{align*}
&\partial_t\int_{\rth} dp\hspace{1mm}(\pZ )^{1/2}f(p)\delta(a^\mu(p_\mu-b_\mu))+C_{\ell}\int_{\rth}dp\hspace{1mm}(\pZ )^{1/2} f(p)\delta(a^\mu(p_\mu-b_\mu))\cr
&\hspace{1.5cm}\lesssim \frac{1}{\sqrt{a^{\mu}a_{\mu}}}\|f\|^2_{L^1_{1/2}}
\lesssim \frac{1}{\sqrt{a^{\mu}a_{\mu}}}\|f_0\|^2_{L^1_{1}},
\end{align*}
which immediately gives
 $$
 \frac{d}{dt}\left(e^{C_{\ell}t}\int_{\rth}dp ~ (\pZ )^{1/2}f(p)\delta(a^\mu(p_\mu-b_\mu))\right)\lesssim\frac{1}{\sqrt{a^{\mu}a_{\mu}}} \|f_0\|^2_{L^1_{1}} e^{C_{\ell}t}.
 $$
Therefore, we obtain
\begin{multline*}
\int_{\rth}dp\hspace{1mm}(\pZ )^{1/2}f(p)\delta(a^\mu(p_\mu-b_\mu))
\\
\lesssim
e^{-C_{\ell}t}\int_{\rth}dp\hspace{1mm}(\pZ )^{1/2}f_0(p)\delta(a^\mu(p_\mu-b_\mu))+\frac{1-e^{-C_{\ell}t}}{C_{\ell}}\frac{1}{\sqrt{a^{\mu}a_{\mu}}} \|f_0\|^2_{L^1_{1}}.
\end{multline*}
It remains to estimate the first term in the upper bound. For this, we use the standard polar-coordinate representation of $p\mapsto (r,\theta,\phi)$ with $z$ axis parallel to the vector $a=(a^1, a^2, a^3)$ from $a^\mu$
so that we have
\begin{align*}
I_1& \eqdef \int_{\rth}dp\hspace{1mm}(\pZ )^{1/2}f_0(p)\delta(a^\mu(p_\mu-b_\mu))
\\
&=\int_{0}^\infty r^2 dr\   \int_0^{2\pi}d\theta \int_{0}^{\pi}\sin\phi d\phi   (1+r^2)^{1/4}f_0(p)\delta(-\aZ\sqrt{1+r^2}+|a|r\cos \phi-a^\mu b_\mu)\\
&=\int_{0}^\infty dr\  \int_0^{2\pi}d\theta \int_{0}^{\pi} d\phi \sin\phi r^2(1+r^2)^{1/4}f_0(p)\frac{1}{|a|r}\delta\left(\cos\phi-\frac{\aZ\sqrt{1+r^2}+a^\mu b_\mu}{|a|r}\right).
\end{align*}
We then perform another change of variables, $v=\cos\phi$, to compute
\begin{align*}
I_1&=\int_{0}^\infty dr\   \int_0^{2\pi}d\theta \int_{-1}^{1} dv (1+r^2)^{1/4}f_0(p)\frac{r}{|a|}\delta\left(v-\frac{\aZ\sqrt{1+r^2}+a^\mu b_\mu}{|a|r}\right)\cr
&\leq \|f_0\|_{L^\infty_{\exm}} \int_{0}^\infty dr\   \int_0^{2\pi}d\theta \int_{-1}^{1} dv (1+r^2)^{\frac{1}{4}-\frac{s}{2}}\frac{r}{|a|}\delta\left(v-\frac{\aZ\sqrt{1+r^2}+a^\mu b_\mu}{|a|r}\right)\cr
&\leq 2\pi  \|f_0\|_{L^\infty_{\exm}}\int_{0}^\infty dr\frac{(1+r^2)^{\frac{1}{4}-\frac{s}{2}}r}{|a|}\cr
&\lesssim \frac{\|f_0\|_{L^\infty_{\exm}}}{|a|},
\end{align*}
which holds for $\exm >\frac{5}{2}$.  This completes the proof.
\end{proof}

\section{Propagation of the uniform upper bound}\label{pupper}
In this section, we prove the propagation of the uniform $L^\infty$ upper bound for solutions to \eqref{RBE} that is given in Theorem \ref{main}.

\subsection{Uniform bound of $Q^+$}
The last ingredient that we will use in our proof of Theorem \ref{main} is the uniform upper bound for $Q^+$.   To obtain this bound, we will derive the relativistic version of the Carleman representation:

\begin{proposition}{\bf{[Relativistic Carleman representation]}}\label{relativistic Carleman} The relativistic gain operator $Q^+$ has the following alternative representation:
\begin{align*}
	Q^+(f,f)
	=\frac{1}{4\pZ }\int_{\mathbb{R}^3}\frac{dp'}{\pZp } \  f(p')\int_{(p'^\mu-p^\mu)(q'_\mu-p_\mu)=0}\frac{d\pi_{q'}}{\qZp }\frac{s\sigma u(\pZp +\qZp -\pZ )f(q')}{\left|(\pZp -\pZ )\frac{q'}{\qZp }-(p'-p)\right|},
\end{align*}
where  $u(x)$ is defined as in (\ref{u}) and $d\pi_{q'}$ is the surface measure on the relativistic hypersurface:
$$
\left\{q'^{\mu}\in\mathbb{R}^4\,|\,(p'^\mu-p^\mu)(q'_\mu-p_\mu)=0\right\}.
$$
\end{proposition}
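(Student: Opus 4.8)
The plan is to pass from the Dirac-delta form \eqref{Q original}--\eqref{W} of $Q^+$ to the surface-integral form in three moves: (i) raise the $q$-integral to four dimensions, (ii) carry out the conservation $\delta^{(4)}$-integration, which turns the mass-shell constraint on $q^\mu$ into the orthogonality constraint that defines the hypersurface, and (iii) collapse the remaining $q'$-integral with the simple-layer (coarea) formula. Substituting \eqref{W} into \eqref{Q original} gives
\begin{equation*}
Q^+(f,f)=\frac{1}{2\pZ}\int_{\mathbb{R}^3}\frac{dq}{\qZ}\int_{\mathbb{R}^3}\frac{dq'}{\qZp}\int_{\mathbb{R}^3}\frac{dp'}{\pZp}\,s\sigma\,\delta^{(4)}(p^\mu+q^\mu-p'^\mu-q'^\mu)\,f(p')f(q').
\end{equation*}
First I would use the Lorentz-invariant measure identity \eqref{lorentz.invariant.m} to replace $\int_{\mathbb{R}^3}\frac{dq}{\qZ}$ by $\int_{\mathbb{R}^4}dq^\mu\,u(q^0)\,\delta(q^\mu q_\mu+1)$, and then integrate the four-dimensional measure $dq^\mu$ against $\delta^{(4)}(p^\mu+q^\mu-p'^\mu-q'^\mu)$; this evaluates everything at $q^\mu=p'^\mu+q'^\mu-p^\mu$, so that $u(q^0)$ becomes $u(\pZp+\qZp-\pZ)$ and the mass-shell factor becomes $\delta(q^\mu q_\mu+1)$ with that substitution in force.

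The algebraic core is the next step. Expanding $(p'^\mu+q'^\mu-p^\mu)(p'_\mu+q'_\mu-p_\mu)$ and using the three mass-shell identities $p^\mu p_\mu=p'^\mu p'_\mu=q'^\mu q'_\mu=-1$ yields
\begin{equation*}
\big(q^\mu q_\mu+1\big)\big|_{q^\mu=p'^\mu+q'^\mu-p^\mu}=2\,(p'^\mu-p^\mu)(q'_\mu-p_\mu),
\end{equation*}
so that $\delta(q^\mu q_\mu+1)=\tfrac12\,\delta\big((p'^\mu-p^\mu)(q'_\mu-p_\mu)\big)$, the factor $u(\pZp+\qZp-\pZ)$ being precisely what discards the spurious sheet of this quadric on which $p'^0+q'^0-p^0$ would be negative. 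At this point $Q^+(f,f)$ has been brought to the intermediate form
\begin{equation*}
\frac{1}{4\pZ}\int_{\mathbb{R}^3}\frac{dp'}{\pZp}f(p')\int_{\mathbb{R}^3}\frac{dq'}{\qZp}\,s\sigma\,u(\pZp+\qZp-\pZ)\,\delta\big((p'^\mu-p^\mu)(q'_\mu-p_\mu)\big)\,f(q').
\end{equation*}
Then I would collapse the inner integral with the simple-layer formula $\int_{\mathbb{R}^3}G(q')\,\delta(\Phi(q'))\,dq'=\int_{\{\Phi=0\}}\frac{G(q')}{|\nabla_{q'}\Phi(q')|}\,d\pi_{q'}$ applied to $\Phi(q')=(p'^\mu-p^\mu)(q'_\mu-p_\mu)$ with $q'^0=\sqrt{1+|q'|^2}$. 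Differentiating, using $\partial q'^0/\partial q'^j=q'^j/q'^0$, gives $\nabla_{q'}\Phi=(p'-p)-(\pZp-\pZ)\frac{q'}{\qZp}$, whose modulus is exactly the denominator in the statement; since $(p'-p)$ is space-like or zero one has $|p'-p|\ge|\pZp-\pZ|$, while $|q'|/\qZp<1$, so $\nabla_{q'}\Phi$ never vanishes on the surface off the negligible set $\{p'=p\}$ and $d\pi_{q'}$ is a genuine surface measure. Inserting this produces exactly the asserted identity.

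The genuinely new point, and the only real obstacle, is the algebraic reduction in the second step: it replaces the linear interaction hyperplane of the classical Carleman representation by the nonlinear quadric $(p'^\mu-p^\mu)(q'_\mu-p_\mu)=0$, and it works only because $\int\frac{dq}{\qZ}$ is first raised to four dimensions so that the conservation $\delta^{(4)}$ can eliminate $q^\mu$ cleanly; integrating a three-dimensional $q$ against the spatial part of $\delta^{(4)}$ alone would not expose this structure. Beyond that, the remaining issues are bookkeeping: tracking the constant (the factor $\tfrac12$ from $\delta\big(2\,(p'^\mu-p^\mu)(q'_\mu-p_\mu)\big)$ is what turns the prefactor into $\tfrac{1}{4\pZ}$), and checking that the Heaviside factor $u$ is placed so as to keep precisely the physical sheet $\pZp+\qZp-\pZ>0$ of the hypersurface.
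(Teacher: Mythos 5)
Your proposal is correct and follows essentially the same route as the paper: raise the $dq$-integral to four dimensions via the Lorentz-invariant measure identity \eqref{lorentz.invariant.m}, evaluate $\delta^{(4)}$ to convert the residual mass-shell constraint into $\delta\big(2(p'^\mu-p^\mu)(q'_\mu-p_\mu)\big)$, and collapse the remaining $q'$-integral with the simple-layer formula. The one small addition you make—explicitly computing $\nabla_{q'}\Phi=(p'-p)-(\pZp-\pZ)q'/\qZp$ and verifying that it does not vanish for $p'\ne p$—is left tacit in the paper but is a welcome check.
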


\begin{proof}
We recall from  (\ref{Q original}) and (\ref{W}) that $Q^+$ can be written as
	$$
	Q^+(f,f)= \frac{1}{2\pZ }\int_{\mathbb{R}^3}\frac{dq}{{\qZ }}\int_{\mathbb{R}^3}\frac{dq'\hspace{1mm}}{{\qZp }}\int_{\mathbb{R}^3}\frac{dp'\hspace{1mm}}{{\pZp }} s \sigma \delta^{(4)}(p^\mu +q^\mu -p'^\mu -q'^\mu) f(p')f(q').
	$$
We use \eqref{lorentz.invariant.m} with \eqref{u} to raise the 3-fold integral with respect to $dq$ to a 4-fold integral with respect to $dq^\mu$.  In the rest of this proof we will for brevity use the notation $Q^+ = Q^+(f,f)$. Then we have
\begin{multline*}
Q^+
= \frac{1}{2\pZ }\int_{\rfo}dq^\mu \int_{\mathbb{R}^3}\frac{dq'\hspace{1mm}}{{\qZp }}\int_{\mathbb{R}^3}\frac{dp'\hspace{1mm}}{{\pZp }}  s\sigma \delta^{(4)}(p^\mu +q^\mu -p'^\mu -q'^\mu)\delta(q^\mu q_\mu+1)
\\
\times u(\qZ ) f(p')f(q').
\end{multline*}
Now we reduce the integral with respect to $dq^\mu$ by evaluating the 4-dimensional delta function as below:
\begin{multline*}
Q^+= \frac{1}{2\pZ }  \int_{\mathbb{R}^3}\frac{dq'}{\qZp }\hspace{1mm}\int_{\mathbb{R}^3}\frac{dp'}{\pZp }\hspace{1mm}s\sigma \delta\left((p'^\mu+q'^\mu-p^\mu)(p'_\mu+q'_\mu-p_\mu)+1\right)
\\
\times u(\pZp +\qZp -\pZ )f(p')f(q').
\end{multline*}
Now the Lorentz inner product inside the delta function can be expanded as
\begin{align*}
&(p'^\mu+q'^\mu-p^\mu)(p'_\mu+q'_\mu-p_\mu)+1\\
&\quad=2p'^\mu q'_\mu-2p'^\mu p_\mu-2q'^\mu p_\mu +p'^\mu p'_\mu+q'^\mu q'_\mu+p^\mu p_\mu+1\\
&\quad=2p'^\mu q'_\mu-2p'^\mu p_\mu-2q'^\mu p_\mu +2p^\mu p_\mu\\
&\quad=2(p'^\mu-p^\mu)(q'_\mu-p_\mu),
\end{align*}
where we used that these are all energy-momentum vectors as
\[
p'^\mu p'_\mu+q'^\mu q'_\mu+p^\mu p_\mu+1=-2=2p^\mu p_\mu.
\]
Therefore, the gain term $Q^+$ is equal to
\begin{multline}\label{main upper}
Q^+ =
\\
\frac{1}{4\pZ }  \int_{\mathbb{R}^3}\frac{dp'}{\pZp } f(p')\int_{\mathbb{R}^3}\frac{dq'}{\qZp }\hspace{1mm}s\sigma \delta\left((p'^\mu-p^\mu)(q'_\mu-p_\mu)\right) u(\pZp +\qZp -\pZ )f(q'),
\end{multline}
since $\delta(2x)=\delta(x)/2$. We now apply the simple layer formula in the $q'$ variable as
\[
\int_{\mathbb{R}^n}f(x)\delta(g(x))dx=\int_{g(x)=0}\frac{f(x)}{|\nabla_xg(x)|}d\pi_{x}.
\]
Plugging this into \eqref{main upper} completes the proof.
\end{proof}

We will now show that the gain term is uniformly bounded from above under (\ref{initial}) and (\ref{Sol}). For the estimates in the proof of Proposition \ref{upperG}, we will use the  representation \eqref{main upper} without applying the simple layer formula.

\begin{proposition}{\bf[Uniform upper bound of $Q^+$]} \label{upperG}
Suppose $f_0$ satisfies (\ref{initial}) for $\exm >\frac{5}{2}$. Let $f$ be a solution to (\ref{RBE}) satisfying (\ref{Sol}).
 Then, there exists a uniform constant $C_{Q^+} = C_{Q^+}(\|f_0\|_{L^\infty_{\exm}},\|f_0\|_{L^1_1})>0$
 such that
 $$
 Q^+(f,f)\leq C_{Q^+}.
 $$
\end{proposition}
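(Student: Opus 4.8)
The plan is to estimate $Q^+$ directly from the intermediate form \eqref{main upper} of the Carleman representation, controlling the inner $q'$-integral over the relativistic hyperplane by Lemma \ref{lemma4} and then the remaining $p'$-integral by Lemma \ref{lemma3}. First I would record the sharp polynomial growth of the kernel appearing in \eqref{main upper}. By the hard-ball hypothesis \eqref{assumption} and the identity \eqref{g}, together with $|p'|<\pZp$ and $|q'|<\qZp$, one has $g(p'^\mu,q'^\mu)^2=2(-p'^\mu q'_\mu-1)\lesssim\pZp\qZp$, hence $s=g^2+4\lesssim\pZp\qZp$ and $\sigma\lesssim g\lesssim(\pZp\qZp)^{1/2}$, so that
\[
\frac{s\,\sigma}{\qZp}\,u(\pZp+\qZp-\pZ)\ \lesssim\ \frac{(\pZp\qZp)^{3/2}}{\qZp}\ =\ (\pZp)^{3/2}\,(\qZp)^{1/2}.
\]
This exact pairing of weights is the heart of the matter: a cruder estimate such as $g(p'^\mu,q'^\mu)\le|p'-q'|\le\pZp+\qZp$ (from \eqref{gINEQ}) would leave a full power of $\qZp$ on $f(q')$, overshooting the integrability threshold $\exm>\frac{5}{2}$ that is built into Lemma \ref{lemma4}.

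Next I would fix $p$ and $p'$ with $p'\ne p$ and set $a^\mu\eqdef p'^\mu-p^\mu$. Since $p^\mu$ and $p'^\mu$ are future-directed energy-momentum vectors, $p'^\mu p_\mu\le-1$, so $a^\mu a_\mu=p'^\mu p'_\mu-2p'^\mu p_\mu+p^\mu p_\mu=-2-2p'^\mu p_\mu\ge0$; thus $a^\mu$ is space-like except on the measure-zero set where $p'=p$, and moreover $a^\mu a_\mu=g(p'^\mu,p^\mu)^2\le|p'-p|^2$. Bounding $u\le1$, pulling the factor $(\pZp)^{3/2}$ out of the $q'$-integral, and invoking Lemma \ref{lemma4} with this $a^\mu$ and with $b^\mu=p^\mu$, I obtain
\[
\int_{\rth}\frac{dq'}{\qZp}\,s\,\sigma\,\delta\big((p'^\mu-p^\mu)(q'_\mu-p_\mu)\big)\,u(\pZp+\qZp-\pZ)\,f(q')\ \lesssim\ (\pZp)^{3/2}\,\frac{C_{f_0}}{g(p'^\mu,p^\mu)},
\]
where the bound $|p'-p|^{-1}\le g(p'^\mu,p^\mu)^{-1}$ has been used to merge the two terms produced by Lemma \ref{lemma4}, and $C_{f_0}$ depends only on $\|f_0\|_{L^\infty_{\exm}}$, $\|f_0\|_{L^1_1}$ and the constant $C_\ell$ from Lemma \ref{lowerL}.

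Finally, substituting this into \eqref{main upper} gives
\[
Q^+(f,f)\ \lesssim\ \frac{C_{f_0}}{\pZ}\int_{\rth}dp'\,(\pZp)^{1/2}\,\frac{f(p',t)}{g(p'^\mu,p^\mu)},
\]
and Lemma \ref{lemma3} with $a^\mu=p^\mu$ (so that $\aZ=\pZ$) bounds the remaining integral by $\lesssim(\pZ)^{1/2}\,C_{f_0}'$, where $C_{f_0}'$ is again controlled by $\|f_0\|_{L^\infty_1}\le\|f_0\|_{L^\infty_{\exm}}$, $\|f_0\|_{L^1_1}$ and $C_\ell$. Hence $Q^+(f,f)\lesssim(\pZ)^{-1/2}\,C_{f_0}\,C_{f_0}'\le C_{Q^+}$, uniformly in $p$ and $t$, which is the assertion. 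I expect the real work to be essentially careful bookkeeping: arranging the powers of $\pZp$ and $\qZp$ so that Lemmas \ref{lemma4} and \ref{lemma3} apply with precisely their stated $(\cdot)^{1/2}$ weights, and verifying the space-like character of $a^\mu=p'^\mu-p^\mu$ together with the identity $a^\mu a_\mu=g(p'^\mu,p^\mu)^2$, which is what allows the two a priori distinct singular factors $|p'-p|^{-1}$ and $(a^\mu a_\mu)^{-1/2}$ to be absorbed into the single potential $g(p'^\mu,p^\mu)^{-1}$ handled by Lemma \ref{lemma3}.
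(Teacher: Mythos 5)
Your proposal is correct and coincides essentially step for step with the paper's own proof: same starting point \eqref{main upper}, same bound $s\sigma\lesssim(\pZp\qZp)^{3/2}$ giving the $(\pZp)^{1/2}$, $(\qZp)^{1/2}$ weights, same application of Lemma \ref{lemma4} with $a^\mu=p'^\mu-p^\mu$ and $b^\mu=p^\mu$ followed by $|p'-p|\geq g(p'^\mu,p^\mu)$ to merge the two singular factors, and same closing step via Lemma \ref{lemma3} with $a^\mu=p^\mu$. The only difference is expositional emphasis on why the $(\cdot)^{1/2}$ weights emerge; no substantive gap.
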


We mention that the proof of Proposition \ref{upperG} only uses the entropy bounds from \eqref{initial} and \eqref{Sol} in the application of the lower bound for the \textit{collision frequency} from Lemma \ref{lowerL} that was used in the proofs of Lemmas \ref{lemma3} and \ref{lemma4}.

\begin{proof}We start with the relativistic Carleman representation in \eqref{main upper}.  Note that $g^2(p^\mu,q^\mu)\leq s(p^\mu,q^\mu)=s(p'^\mu,q'^\mu)\lesssim \pZp \qZp $ and our hypothesis on the collision cross-section $\sigma$ from \eqref{assumption} says that $\sigma\approx g$. Also $u\leq 1$ as in \eqref{u}. Then we obtain
\begin{equation*}
Q^+(f,f)\lesssim \frac{1}{\pZ }  \int_{\mathbb{R}^3}\frac{dp'}{\pZp }\hspace{1mm}\int_{\mathbb{R}^3}\frac{dq'}{\qZp }\hspace{1mm}(\pZp \qZp )^{3/2} \delta\left((p'^\mu-p^\mu)(q'_\mu-p_\mu)\right) f(p')f(q').
\end{equation*}
	Note that $p'^\mu-p^\mu$ is a space-like vector as long as $p'^\mu-p^\mu \ne 0$ since then
$$
(p'^\mu-p^\mu)(p'_\mu-p_\mu)=g^2(p'^\mu,p^\mu)>0.
$$
Then by Lemma \ref{lemma4} with $a^\mu=(p'^\mu-p^\mu)$ and $b^\mu=p^\mu$ (where the role of $q'^\mu$ is that of $p^\mu$ in Lemma \ref{lemma4}) we obtain that
\begin{multline*}
\frac{1}{\pZ }  \int_{\mathbb{R}^3}dp'(\pZp )^{1/2}f(p')\int_{\mathbb{R}^3}dq'(\qZp )^{1/2}f(q')
\delta\left((p'^\mu-p^\mu)(q'_\mu-p_\mu)\right)
\\
\lesssim \frac{1}{\pZ }  \int_{\mathbb{R}^3}dp' \frac{(\pZp )^{1/2}f(p')}{|p'-p| }\|f_0\|_{L^\infty_{\exm}} +\frac{1-e^{-C_{\ell}t}}{C_{\ell}}\frac{\|f_0\|^2_{L^1_{1}}}{\pZ }  \int_{\mathbb{R}^3}dp'\frac{(\pZp )^{1/2}f(p')}{g(p'^\mu,p^\mu)}
\\
\leq \left(\frac{\|f_0\|_{L^\infty_{\exm}}}{\pZ }+\frac{1-e^{-C_{\ell}t}}{C_{\ell}}\frac{\|f_0\|^2_{L^1_{1}}}{\pZ } \right)\int_{\mathbb{R}^3}dp' \frac{(\pZp )^{1/2}f(p')}{g(p'^\mu,p^\mu)},
\end{multline*}
where in the last inequality we used $|p'-p|\geq g(p'^\mu,p^\mu)$ from \eqref{gINEQ}.
Now we use Lemma \ref{lemma3} to obtain that
\begin{align*}
\frac{1}{\pZ }\int_{\mathbb{R}^3}dp'\frac{(\pZp )^{1/2}f(p')}{g(p'^\mu,p^\mu)}\lesssim \frac{1}{\pZ }(\pZ )^{1/2}(\|f_0\|_{L^\infty_{\exm}}+\|f_0\|_{L^1_1}+ \|f\|^2_{L^1_{1}}).
\end{align*}
Therefore, we conclude that $Q^+(f,f) \le (\pZ )^{-\frac{1}{2}}C_{Q^+}(\|f_0\|_{L^\infty_{\exm}},\|f_0\|_{L^1_1})$.
\end{proof}

Now we are ready to prove our main theorem.

\begin{proof}[Proof of Theorem \ref{main}]
Under the assumptions on the initial date in Theorem \ref{main}, we have a global in time solution to (\ref{RBE}), as given in \cite[Theorem 4.2, page 933]{MR3166961}, satisfying \eqref{Sol}.  Therefore, applying  Lemma \ref{lowerL} and Proposition \ref{upperG} to (\ref{RBE}), we obtain that
\begin{align*}
\partial_t f+C_{\ell}f\leq C_{Q_+},
\end{align*}
which directly implies
$$
f(p,t)\leq e^{-C_{\ell}t}f_0(p)+\frac{C_{Q^+}}{C_{\ell}}\left( 1-e^{-C_{\ell}t} \right) <\infty.
$$
Since we have assumed that $f_0\in L^\infty_{\exm}$,  this completes the proof.
\end{proof}

%
%
%

\section{Propagation of the polynomial and the Maxwellian upper bounds}\label{pmaxwell}
In this section, we will prove Theorem \ref{maxwellian upper bound} and Theorem \ref{poly.upper.bound}. For the proof of both theorems, we will use the following Theorem \ref{L1E} as well as Theorem \ref{main}:

\begin{theorem}[Theorem 5.2 of \cite{MR3166961}]\label{L1E}
Let $f_0$ satisfy the assumptions given in \eqref{Existence}.  Then by \cite[Theorem 4.2, page 933]{MR3166961} we have a unique global in time solution $f(p,t)\ge 0$ to \eqref{RBE}.  This solution will further have the propagation of moments as follows:
\begin{enumerate}
	\item  If $f_0$ additionally satisfies  for some $k \ge 1$ that
$$
\int_\rth f_0(p)(\pZ )^k dp<\infty,
$$
then there exists a uniform constant $C>0$ such that the polynomial moment will propagate in time:
$$
\int_\rth f(p,t)(\pZ )^k dp\le C <\infty,\  \forall t\ge 0.
$$
	\item If $f_0$ additionally satisfies for some constant $R_0 > 0$ that
$$\int_\rth f_0(p)e^{R_0\pZ }dp<\infty,$$ then  the exponential moment will propagate in the sense that there exists a
	constant $R = R(f_0, R_0) > 0$ such  that there is uniform constant $C>0$ satisfying
	$$
	\int_\rth f(p,t)e^{R\pZ }dp\le C <\infty,\ \forall t\ge 0.
	$$
\end{enumerate}
\end{theorem}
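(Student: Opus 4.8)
The final statement, Theorem~\ref{L1E}, is a propagation-of-moments result, and I would prove it by the relativistic analogue of the classical Povzner/Arkeryd/Wennberg moment method. Granting the solution $f(p,t)\ge 0$ from the cited existence theorem, set $m_k(t)\eqdef\int_{\rth}f(p,t)(\pZ)^k\,dp$; note that $m_0$ (mass) and $m_1$ (energy, since $m_1=T^{00}[f]$) are among the conserved quantities \eqref{conlaw}. Testing \eqref{RBE} against $(\pZ)^k$ and using the weak (center-of-momentum) form \eqref{Q center of momentum} gives
\[
\frac{d}{dt}m_k=\frac12\int_{\rth\times\rth\times\mathbb{S}^2}\mollrV\,\sigma(g,\theta)\,f(p)f(q)\big[(\pZp)^k+(\qZp)^k-(\pZ)^k-(\qZ)^k\big]\,d\omega\,dq\,dp .
\]
The ``loss'' half $\tfrac12\int \mollrV\sigma f(p)f(q)[(\pZ)^k+(\qZ)^k]$ equals, after relabelling $p\leftrightarrow q$, $\int_{\rth}f(p)(\pZ)^k(Lf)(p)\,dp$, which by Lemma~\ref{lowerL} is comparable to $m_{k+1}$ (it lies between $C_\ell\,m_{k+1}$ and $C_u\,m_{k+1}$). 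This is the relativistic substitute for the hard-sphere factor $|v-v_*|$ that makes the loss one power ``stronger'' than the moment itself.

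The second ingredient is a relativistic Povzner lemma. From \eqref{p'} one has $\pZp+\qZp=\pZ+\qZ=:E$ and $\pZp,\qZp=\tfrac{E}{2}\pm\tfrac{g}{2\sqrt s}\,\omega\cdot(p+q)$, so as $\omega$ ranges over $\mathbb{S}^2$ the pair $(\pZp,\qZp)$ sweeps a segment of half-length $\tfrac{g|p+q|}{2\sqrt s}$ about the midpoint $E/2$; a short computation with $s=E^2-|p+q|^2$ and $g^2=s-4$ gives the strict gap $\tfrac{g|p+q|}{\sqrt s}<E$ (because $s^2-4s+4E^2\ge 12>0$). Since $\sigma\approx g$ is $\theta$-independent up to constants, $\mollrV\sigma$ does not depend on $\omega$, and the $\omega$-integral of $(\pZp)^k+(\qZp)^k$ reduces to a one-dimensional integral in $t=\tfrac{\omega\cdot(p+q)}{|p+q|}$. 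Convexity of $x\mapsto x^k$, the gap, and the standard bookkeeping then give
\[
\frac{1}{|\mathbb{S}^2|}\int_{\mathbb{S}^2}\big[(\pZp)^k+(\qZp)^k\big]\,d\omega\ \le\ \gamma_k\big[(\pZ)^k+(\qZ)^k\big]\ +\ C_k\!\!\sum_{1\le a\le k-1}\binom{k}{a}(\pZ)^a(\qZ)^{k-a},
\]
with $\gamma_k<1$ and $\gamma_k\to0$ as $k\to\infty$, the key being that the cross terms carry strictly lower weight in each variable.

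Inserting this into the moment ODE: the ``$\gamma_k$'' term is $\gamma_k$ times the loss half, hence $\le\gamma_kC_u\,m_{k+1}$ (exactly $\gamma_k\cdot(\text{loss})$ when $\sigma$ is genuinely isotropic). For the cross terms, $\mollrV\sigma\lesssim(\pZ\qZ)^{1/2}$ (from $g^2=s-4\le s\lesssim\pZ\qZ$ and $\sqrt s\lesssim(\pZ\qZ)^{1/2}$) gives $\int\int \mollrV\sigma f(p)f(q)(\pZ)^a(\qZ)^{k-a}\lesssim m_{a+\frac12}\,m_{k-a+\frac12}$, and since $1\le a\le k-1$, log-convexity of $j\mapsto m_j$ between indices $1$ and $k+1$ bounds this by $C\,m_1^{(k+1)/k}\,m_{k+1}^{(k-1)/k}$, hence by $\varepsilon\,m_{k+1}+C_{\varepsilon}$ via Young with $m_1$ bounded by conservation. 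Collecting,
\[
\frac{d}{dt}m_k\ \le\ -\big(C_\ell-\gamma_kC_u-\varepsilon\big)\,m_{k+1}+C_{k,\varepsilon}\ \le\ -c_k\,m_k+C_{k,\varepsilon},
\]
using $m_{k+1}\ge m_k$ (as $\pZ\ge1$), with $c_k>0$ once $\varepsilon$ is small and $k$ is large enough that $\gamma_kC_u<C_\ell$; Gronwall then gives $\sup_{t\ge0}m_k(t)\le\max\{m_k(0),\,C_{k,\varepsilon}/c_k\}<\infty$, and the finitely many small values of $k$ are handled by bootstrapping upward from the conserved $m_0,m_1$ (for a small target index the crude bound $(\pZp)^k+(\qZp)^k\le E^k$ plus interpolation suffices). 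This is part~(1). For part~(2), put $\mathcal E_z(t)=\int f e^{z\pZ}\,dp=\sum_{k\ge0}\tfrac{z^k}{k!}m_k(t)$, multiply the moment recursion by $z^k/k!$ and sum: the cross-term convolutions become products involving $\mathcal E_z$, the decay $\gamma_k\to0$ makes the relevant series converge, and one obtains a Bobylev-type differential inequality for $\mathcal E_z$ that, for $z\le R=R(f_0,R_0)$ sufficiently small, keeps $\mathcal E_z(t)$ finite for all $t$.

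The main obstacle is the Povzner step: the post-collisional energies are governed by the nonlinear relativistic kinematics \eqref{p'}, the effective angular measure differs from the classical one, and one must extract simultaneously (i) the strict deficit, (ii) the correct decay $\gamma_k\to0$, and (iii) cross terms of strictly lower weight in each momentum — and then verify that the resulting constants are compatible with the $C_\ell$-versus-$C_u$ gap of Lemma~\ref{lowerL} for the low moments. Everything after that is the standard moment machinery adapted to the relativistic weights.
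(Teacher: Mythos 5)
The paper does not supply a proof of this statement: it is Theorem~5.2 of Strain--Yun \cite{MR3166961}, quoted here as a black box for Section~\ref{pmaxwell}, so there is no internal proof to compare against. Your blind reconstruction is nonetheless a fair sketch of the argument one finds in that reference: test against $(\pZ)^k$, identify the loss half with $\int f(\pZ)^k Lf \approx m_{k+1}$ via Lemma~\ref{lowerL}, control the gain half with a relativistic Povzner inequality, interpolate between $m_1$ and $m_{k+1}$, and close by Gr\"onwall. The observation that $\pZp$ sweeps a uniformly weighted interval about $E/2$ once $\sigma\approx g$ is $\theta$-independent is genuinely the entry point of the relativistic Povzner lemma, and the mass-shell identities you use are correct.

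Two points deserve tightening. First, the ``strict gap'' $g|p+q|/\sqrt s < E$ that you compute only guarantees $\pZp,\qZp\in(0,E)$; it is \emph{not} what produces $\gamma_k<1$. From $(\pZ)^2-(\qZ)^2=(p-q)\cdot(p+q)$ and Cauchy--Schwarz one finds $g|p+q|/\sqrt s \geq |\pZ-\qZ|$, so the post-collisional interval is actually \emph{wider} than the pre-collisional spread and $\max(\pZp,\qZp)$ can exceed $\max(\pZ,\qZ)$. The deficit must therefore come from averaging $(E/2+\delta t)^k$ against the flat $dt$-measure on $[-1,1]$ (which yields the familiar $\gamma_k\sim 1/k$ decay), not from a pointwise contraction of the range, and your ``convexity plus gap plus bookkeeping'' clause slightly misstates the mechanism even though the resulting inequality is the right one. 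Second, for part~(1) a cruder route already closes without Povzner at all: $(\pZp)^k+(\qZp)^k\le E^k=\sum_j\binom{k}{j}(\pZ)^j(\qZ)^{k-j}$ together with $\mollrV\sigma\lesssim(\pZ\qZ)^{1/2}$ and log-convexity of $j\mapsto m_j$ bounds the gain by $C\,2^k\,m_1^{(k+1)/k}\,m_{k+1}^{(k-1)/k}$, which is already strictly sublinear in the sink $m_{k+1}$ coming from Lemma~\ref{lowerL}, so Young plus Gr\"onwall finishes for each fixed $k$. The quantitative deficit $\gamma_k\to 0$ is truly needed only to sum the hierarchy and propagate the exponential moment in part~(2), \`a la Bobylev, which you correctly identify as the technical heart.
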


We note that in general $R=R(f_0,R_0)>0$ in Theorem \ref{L1E} satisfies $R <R_0$.  Now we are ready to prove Theorem \ref{maxwellian upper bound}. The proof contains several steps.

\begin{proof}[Proof of Theorem \ref{maxwellian upper bound}]
	Suppose that $f$ is a solution to \eqref{RBE} with the initial data $f_0$ which satisfies the assumptions of Theorem \ref{maxwellian upper bound} with $R_0>0$.
We clearly have
$$
\int_\rth f_0(p)(\pZ )^{m_0}e^{\bar{R}_0\pZ }dp<\infty, \quad   \bar{R}_0<R_0.
$$
Then, by Theorem \ref{L1E}, there exists $R=R(f_0,\bar{R}_0)>0$ and $C>0$ such that
$$
\int_\rth f(p,t)(\pZ )^{m_0}e^{R\pZ }dp\le C<\infty,\ \forall t\ge 0.
$$
Now, we fix a constant $R_1>0$ which satisfies that $R_1<R$, and we define
$$
h(p,t)\eqdef f(p,t) (\pZ )^{m_0}e^{R_1\pZ }.
$$
We will show that there exists a constant $C>0$ such that $h(p,t)\leq C$ for all $t\ge 0$.

To this end we observe that $h$ satisfies
	\begin{align*}
		\partial_t h&=\partial_t(f(p,t) (\pZ )^{m_0}e^{R_1\pZ })\\
		&=(\pZ )^{m_0}e^{R_1\pZ }Q(f,f)\\
		&=(\pZ )^{m_0}e^{R_1\pZ }(Q^+(f,f)-Q^-(f,f)).
	\end{align*}
	Since $Q^-(f,f)=fLf$, we have
	$$\partial_t h +hLf=(\pZ )^{m_0}e^{R_1\pZ }Q^+(f,f).$$
We use Lemma \ref{lowerL}, with $C_{\ell}>0$, to further obtain that
\begin{equation}\label{hboundedbyexpf}
\partial_t h +C_{\ell}(\pZ) h\leq (\pZ )^{m_0} e^{R_1\pZ }Q^+(f,f).
\end{equation}
Next, we observe that
\begin{align}\label{similar}
\begin{split}
4(\pZp )^2(\qZp )^2&=2(\pZp )^2(\qZp )^2+2(\pZp )^2(\qZp )^2\cr
 &\geq (\pZp )^2+(\qZp )^2+2\pZp \qZp \cr
 &=(\pZp +\qZp )^2\cr
 &=(\pZ +\qZ )^2\cr
 &\geq (\pZ )^2,
\end{split}
\end{align}
to obtain using \eqref{Q center of momentum} that
 \begin{align*}
Q^+(h,h)&=\int_\rth dq\int_{\mathbb{S}^2}d\omega \ \mollrV \sigma(g,\theta)(\qZp )^{m_0}(\pZp )^{m_0} e^{R_1(\pZp +\qZp )}f(q')f(p').\\
		&=\int_\rth dq\int_{\mathbb{S}^2}d\omega \ \mollrV \sigma(g,\theta) (\qZp )^{m_0}(\pZp )^{m_0}e^{R_1(\pZ +\qZ )}f(q')f(p')\\
		&\geq \int_\rth dq\int_{\mathbb{S}^2}d\omega\  \mollrV \sigma(g,\theta) 2^{-m_0}(\pZ )^{m_0}e^{R_1(\pZ +\qZ )}f(q')f(p')\\
			&\geq 2^{-m_0}(\pZ )^{m_0}e^{R_1\pZ }\int_\rth dq\int_{\mathbb{S}^2}d\omega \ \mollrV \sigma(g,\theta)f(q')f(p')\\
		&=2^{-m_0}(\pZ )^{m_0}e^{R_1\pZ }Q^+(f,f).
\end{align*}
Therefore,  using \eqref{hboundedbyexpf}, we conclude that $h$ satisfies
\begin{align}\label{h}
\partial_t h+C_{\ell}(\pZ) h\leq 2^{m_0}Q^+(h,h).
\end{align}
Now we define $h_0\eqdef f_0 (\pZ )^{m_0} e^{R_1\pZ }.$
Then
$|h_0|_{L^\infty_{\exm}}$ is bounded for any $\exm >0$ because
$$
|h_0|_{L^\infty_{\exm}}=\sup_{p\in \rth}|(\pZ )^{\exm+m_0}f_0(p)e^{R_1\pZ }|\lesssim \sup_{p\in \rth}|f_0(p)e^{R_0\pZ }|\eqdef A_1<\infty,
$$
since $R_1<R<R_0$.  We also have that $|h|_{L^1_1}$ is bounded because
$$
|h|_{L^1_1}=\int_\rth dp\ (\pZ )^{1+m_0}f(p,t)e^{R_1\pZ }\lesssim\int_\rth dp\ f(p,t)e^{R\pZ }\eqdef A_2<\infty.
$$
Further $|h_0|_{L^1_1}$ is bounded because $R_1<R<\bar{R}_0$ and
$$
|h_0|_{L^1_1}=\int_\rth dp\ (\pZ )^{1+m_0}f_0(p)e^{R_1\pZ }\lesssim\int_\rth dp\ f_0(p)e^{\bar{R}_0\pZ }\eqdef A_3<\infty.
$$
We note that the constants $A_1$, $A_2$ and $A_3$ are uniform for fixed $R_0$.
Therefore, by Proposition \ref{upperG}, there exists a constant $C'>0$ such that
 $$
 Q^+(h,h)\leq C', \quad C'= C'(A_1,A_2,A_3).
 $$
The results of Proposition \ref{upperG} as above follow from the fact that  Lemma \ref{lemma3} and Lemma \ref{lemma4} and hence Proposition \ref{upperG} remain true following the same proofs even if $h$ only solves the differential inequality \eqref{h} instead of \eqref{RBE}.  Note that $C'$ depends only on $f_0$, $\bar{R}_0$, $R$, and $s$, but not on $R_1$.
 Hence, we have from (\ref{h}) that
 $$
 \partial_t h+C_{\ell}h\leq C',
 $$
 for $C_{\ell}>0$ and $C'>0$. Therefore we obtain
$$
h(p,t)\leq e^{-C_{\ell}t}h_0(p)+\frac{C'}{C_{\ell}}\left( 1-e^{-C_{\ell}t} \right)\leq |h_0|_{L^\infty}+\frac{C'}{C_{\ell}}\eqdef C_1.
$$
Thus we further obtain
$$f(p,t)\leq C_1(\pZ )^{-m_0}e^{-R_1\pZ },$$ for $t>0$.
This completes the proof of Theorem \ref{maxwellian upper bound}.
\end{proof}

Next we prove the uniform polynomial bound given in Theorem \ref{poly.upper.bound}.

\begin{proof}[Proof of Theorem \ref{poly.upper.bound}]
 We will use the same strategy as in the proof of Theorem \ref{maxwellian upper bound}, thus we only give a brief sketch of the differences.  We suppose that $f$ is a solution to \eqref{RBE} with the initial data $f_0$ which satisfies the assumptions of Theorem \ref{poly.upper.bound} with $\exm > \frac{5}{2}$.
Then, by Theorem \ref{L1E}, there exists  $C>0$ such that
$$
\int_\rth f(p,t)(\pZ )^{m_0}dp\le C<\infty,\ \forall t\ge 0.
$$
Now, we define
$$
h(p,t)\eqdef f(p,t) (\pZ )^{m_0}.
$$
We show that there exists a constant $C>0$ such that $h(p,t)\leq C$ for all $t\ge 0$.

As in the proof of Theorem \ref{maxwellian upper bound}, $h$ satisfies
$$
\partial_t h +hLf=(\pZ )^{m_0}Q^+(f,f).
$$
We use Lemma \ref{lowerL}, with $C_{\ell}>0$, to similarly obtain that
\begin{equation}\label{hboundedbyexpf.poly}
\partial_t h +C_{\ell}(\pZ) h\leq (\pZ )^{m_0} Q^+(f,f).
\end{equation}
Next, we recall \eqref{similar} to obtain similarly using \eqref{Q center of momentum} that
 \begin{equation*}
Q^+(h,h)\geq 2^{-m_0}(\pZ )^{m_0}Q^+(f,f).
\end{equation*}
Therefore,  using \eqref{hboundedbyexpf.poly}, we conclude that $h$ satisfies
\begin{align}\label{h.poly}
\partial_t h+C_{\ell}(\pZ) h\leq 2^{m_0}Q^+(h,h).
\end{align}
Now we define $h_0\eqdef f_0 (\pZ )^{m_0}$.
Then
$|h_0|_{L^\infty_{\exm}}$ is bounded for any $\exm>\frac{5}{2}$ because we assume initially that
$
|h_0|_{L^\infty_{\exm}}=|f_0|_{L^\infty_{m_0+\exm}}<\infty.
$
 We also have that $|h|_{L^1_1}$ is bounded because $|h(t)|_{L^1_1}=|f(t)|_{L^1_{m_0+1}} \le \tilde{C}<\infty$ for a uniform constant $\tilde{C}>0$ $\forall t\ge 0$ by using Theorem \ref{L1E} and $|f_0|_{L^1_{m_0+1}} <\infty$.    Similarly $|h_0|_{L^1_1}=|f_0|_{L^1_{m_0+1}} <\infty$.  Therefore, again by Proposition \ref{upperG}, there exists a constant $C'>0$ such that
 $$
 Q^+(h,h)\leq C', \quad C'= C'(\tilde{C},|f_0|_{L^1_{m_0+1}},|f_0|_{L^\infty_{m_0+\exm}}).
 $$
Again the proofs of Proposition \ref{upperG},  Lemma \ref{lemma3}, and Lemma \ref{lemma4} still go through even if $h$ only solves the differential inequality \eqref{h.poly}.
We conclude (\ref{h.poly}) that
 $$
 \partial_t h+C_{\ell}h\leq C'.
 $$
The proof is completed using the same argument in the proof of Theorem \ref{maxwellian upper bound}.
\end{proof}

%
%
%
%
%

\section{H-theorem}\label{htheorem}

In this section, as an application of our $L^{\infty}$ estimates, we provide a proof of the $H$-theorem as in \eqref{entropy.eq} for the solutions to \eqref{RBE} from \cite[Theorem 4.2, page 933]{MR3166961}.  Note that the $H$-theorem for $(\ref{RBE})$ is also established in  \cite{MR3166961} under different assumptions on the solution, based on the argument of \cite{MR711482}.  However our uniform upper bounds established in Theorems \ref{main}, \ref{maxwellian upper bound} and \ref{poly.upper.bound} enable us to prove the $H$-theorem in a much more direct way.  To this end we consider the following approximated problem for \eqref{RBE} for any small $\varepsilon >0$:
\begin{align}\label{RRBE}
\begin{split}
\partial_t f^{\varepsilon}&=Q(f^{\varepsilon},f^{\varepsilon}),\cr
f^{\varepsilon}(0)&=f_{0,\epsilon}(p)\ge 0,
\end{split}
\end{align}
where truncated initial data $f_{0,\epsilon}$ is defined by
\begin{align*}
f_{0,\epsilon}(p)=\min\left(\frac{1}{\varepsilon}, f_0\right){\bf 1}_{|p|<1/\varepsilon}+\varepsilon e^{-\pZ }.
\end{align*}
Here ${\bf 1}_A$ is the standard indicator function of the set $A$.
Since we have from the definition of $f_{0,\epsilon}$ that
\[
\|f_{0,\epsilon}\|_{L^{\infty}_{\exm}} \le C({\epsilon, \exm})<\infty,
\]
for any $\exm >0$, the global existence of $f^{\varepsilon}\ge 0$ conserving the mass, momentum and energy as in \eqref{conlaw} (without assuming the entropy bound) is guaranteed as in \cite[Theorem 3.1]{MR3169776} and \cite[Theorem 4.2]{MR3166961}.  Then we see that $f^{\varepsilon}$ is strictly positive by using the Duhamel formula for \eqref{RRBE} as
\begin{align}\label{lf}
\begin{split}
f^{\varepsilon}(p,t)&=e^{-\int^t_0Lf^{\varepsilon}(s)ds}f_{0,\varepsilon}(p)+\int^t_0e^{-\int^t_sLf^{\varepsilon}(\tau)d\tau}Q^+(f^{\varepsilon},f^{\varepsilon})(s) ds\cr
&\geq e^{-\int^t_0Lf^{\varepsilon}(s)ds}f_{0,\varepsilon}(p)\cr
&\geq \varepsilon e^{-tC_{\varepsilon,T}(\pZ)^{1/2}}e^{-\pZ },
\end{split}
\end{align}
where we used that $\partial_t f^{\varepsilon}=Q^+(f^{\varepsilon},f^{\varepsilon}) - f^{\varepsilon} L f^{\varepsilon}$ and we further used
\begin{multline*}
Lf^{\varepsilon}=\int_{\rth\times \mathbb{S}^2}\mollrV \sigma f^\varepsilon (q)dqd\omega
\lesssim
\int_{\mathbb{R}^3}(\pZ \qZ )^{1/2}f^\varepsilon(q)dq
\\
\lesssim
\|f^{\varepsilon}\|_{L^1_{1/2}}(\pZ)^{1/2}
\lesssim
\|f_{0,\varepsilon}\|_{L^1_{1}}(\pZ)^{1/2},
\end{multline*}
since $\mollrV \lesssim 1$ and $\sigma\approx g\lesssim (\pZ \qZ )^{1/2}$ from \eqref{assumption} and \eqref{g}.

On the other hand, instead of using Lemma \ref{lowerL} which involves the entropy bound in \eqref{Sol}, we alternatively use the following trivial lower bound:
\begin{align}\label{Lfep}
Lf^{\varepsilon}\geq 0,
\end{align}
to get from (\ref{RRBE}) that
\begin{align}\label{vit}
\partial_t f^{\varepsilon}\leq Q^+(f^{\varepsilon},f^{\varepsilon}).
\end{align}
Now following the proofs of Lemma \ref{lemma3} and Lemma \ref{lemma4} and Proposition \ref{upperG} reveals that, even without the entropy bounds using \eqref{Lfep} instead of Lemma \ref{lowerL}, we still obtain the following time dependent bound (instead of the uniform-in-time bound in Proposition \ref{upperG}) for $T>0$ as:
\begin{align}\label{QQ}
Q^+(f^{\varepsilon}, f^{\varepsilon})\leq C_{\varepsilon,T}.
\end{align}
Note that we have not used the $H$-functional, defined in \eqref{entropy.functional}, at all in our arguments in this section thus far. Now, gathering the estimates in (\ref{Lfep}), (\ref{vit}) and (\ref{QQ}), we derive  the following local-in time bound for \eqref{RRBE}:
\begin{align}\label{Uf}
f^{\varepsilon}\leq f_{0,\varepsilon}+TC_{\varepsilon,T}, \quad \forall 0\leq t\leq T.
\end{align}
Now, the lower and upper bound in (\ref{lf}) and (\ref{Uf}) guarantees that the $H$-functional for $H(f^{\varepsilon}(t))$, in \eqref{entropy.functional}, is well-defined, and the standard formal computations for the $H$-theorem are justified, see for example \cite{DeGroot,C-K}, so that we obtain
\begin{align*}
H(f^{\varepsilon})+\int^t_0D(f^{\varepsilon})ds= H(f_{0,\varepsilon}).
\end{align*}
Then, we let $\varepsilon\rightarrow0$ and use  convexity to obtain that \eqref{entropy.eq} indeed holds for our unique solutions to \eqref{RBE}.  

%
%
%
%
%
%

\section{Asymptotic Behavior}\label{asympt}

In this section, we will show that our uniform estimates from the previous sections can be crucially used in the study of the asymptotic behavior of (\ref{RBE}) as in the following theorem.

\begin{theorem}\label{convergence.thm.J} We assume that \eqref{initial} holds with $\exm > \frac{13}{2}$.
We construct a global relativistic Maxwellian \eqref{E:Maxwelliandef}, denoted $J$, that has the same mass, momentum and energy \eqref{conlaw} as $f_0$; this is  explained in Section \ref{SS:Maxwellians}.   Then, the solution $f$ to (\ref{RBE}) satisfying \eqref{Sol} further converges asymptotically to the relativistic Maxwellian:
\[
\lim_{t\rightarrow\infty}\|f(t)-J\|_{L^1}=0.
\]
\end{theorem}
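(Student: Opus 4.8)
The plan is to deduce the convergence from the $H$-theorem \eqref{entropy.eq} together with the uniform bounds already obtained, via a compactness argument. First I would pass to the \emph{relative entropy} $H(f(t)\mid J)\eqdef\int_\rth f(t,p)\ln\!\big(f(t,p)/J(p)\big)\,dp$. Since $\ln J$ is an affine combination of the collision invariants $1,p^1,p^2,p^3,\pZ$, the quantity $\int_\rth f(t,p)\ln J(p)\,dp$ is independent of $t$ by \eqref{conlaw}; hence $H(f(t)\mid J)=H(f(t))-\int_\rth f_0\ln J\,dp$, so \eqref{entropy.eq} shows $t\mapsto H(f(t)\mid J)$ is non-increasing, and it is non-negative since $f(t)$ and $J$ have the same mass. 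Thus $H(f(t)\mid J)$ decreases to a limit $\ell\ge 0$, and the Csisz\'ar--Kullback--Pinsker inequality gives $\|f(t)-J\|_{L^1}^2\lesssim H(f(t)\mid J)\to \ell$, so the theorem reduces to proving $\ell=0$. Moreover \eqref{entropy.eq} gives $\int_0^\infty D(f(s))\,ds<\infty$, hence along some sequence $t_n\to\infty$ one has $D(f(t_n))\to 0$.

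Next I would identify the possible large-time limits. Theorem \ref{main} gives $\sup_{t\ge0}\|f(t)\|_{L^\infty}<\infty$, and since $\exm>\frac{13}{2}$ the hypotheses \eqref{initial} imply \eqref{Existence}, so Theorem \ref{L1E} applies and propagates sufficiently many polynomial moments: $\sup_{t\ge0}\|f(t)\|_{L^1_k}<\infty$ for $k$ as large as is needed below. These bounds make $\{f(t)\}_{t\ge0}$ uniformly integrable and tight, hence weakly relatively compact in $L^1(\rth)$ by Dunford--Pettis; passing to a subsequence, $f(t_n)\rightharpoonup\varphi$ weakly in $L^1$. The moment bounds let the conservation laws \eqref{conlaw} pass to the weak limit, so $\varphi$ carries the mass, momentum and energy of $f_0$. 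Using the moment bounds to dominate the growth $\mollrV\sigma\lesssim(\pZ\qZ)^{1/2}$ and the logarithmic factor in \eqref{entropy.production}, the entropy-production functional $D$ is lower semicontinuous along this convergence, so $D(\varphi)\le\liminf_n D(f(t_n))=0$. The vanishing of $D(\varphi)$ forces $\varphi(p')\varphi(q')=\varphi(p)\varphi(q)$ on the collision manifold, which by the classical characterization of equality in the $H$-theorem (see \cite[Chapter 2]{DeGroot}) means $\varphi$ is a relativistic Maxwellian; having the conserved quantities of $f_0$, it must equal $J$. Since every weak subsequential limit of $f(t)$ as $t\to\infty$ is therefore $J$, in fact $f(t)\rightharpoonup J$ weakly in $L^1$.

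It remains to improve this to $\ell=0$, equivalently $\lim_{t\to\infty}H(f(t))=H(J)$, and I would do this through a limit-trajectory argument. Because $\partial_\tau f=Q(f,f)$ is uniformly bounded in $L^1(\rth)$ --- indeed $\int_\rth|Q(f,f)|\,dp\le 2\int_\rth fLf\,dp\lesssim\|f\|_{L^1_1}$ by Lemma \ref{lowerL} --- the curves $\tau\mapsto f(t_n+\tau,\cdot)$ are uniformly Lipschitz into $L^1(\rth)$ on each $[0,T]$; together with the weak $L^1$ compactness of their values this yields, along a further subsequence, $f(t_n+\cdot)\to\bar f$ in $C([0,T];L^1\text{-weak})$. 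Using the uniform $L^\infty$ and moment bounds together with the regularizing (compactness) effect of the relativistic gain operator \cite{MR1402446,MR3880739,MR1450762}, one passes to the limit in the collision operator to recognize $\bar f$ as a weak solution of \eqref{RBE} on $[0,T]$; and $\int_0^T D(\bar f(\tau))\,d\tau\le\liminf_n\int_{t_n}^{t_n+T}D(f(s))\,ds=0$, so $D(\bar f(\tau))=0$ for a.e.\ $\tau$, which forces each $\bar f(\tau)$ to be a Maxwellian; being then a stationary solution carrying the conserved quantities of $f_0$, $\bar f(\tau)\equiv J$. Finally, the same gain-term compactness upgrades $f(t_n+\tau)\rightharpoonup J$ to strong convergence in $L^1(\rth)$ for a.e.\ $\tau$ (in the Duhamel form of \eqref{RBE} the loss term, which has no smoothing, is dominated by the uniform $L^\infty$ bound from Theorem \ref{main} and Proposition \ref{upperG}), so $H(f(t_n+\tau))\to H(J)$; since $t\mapsto H(f(t))$ is monotone this forces $\lim_{t\to\infty}H(f(t))=H(J)$, i.e.\ $\ell=0$, and the theorem follows.

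I expect the main obstacle to be this last step: passing rigorously to the limit in the nonlinear collision operator so as to recognize $\bar f$ as a solution, and converting weak $L^1$ convergence into strong $L^1$ convergence. This is precisely where the strong uniform estimates of Theorems \ref{main}--\ref{poly.upper.bound} --- in particular the pointwise control of $Q^+$ from Proposition \ref{upperG} and the high polynomial moments secured by $\exm>\frac{13}{2}$ --- are indispensable, as they supply the equi-integrability, tightness and gain-term regularity that make the lower semicontinuity of $D$, the passage to the limit, and the entropy convergence legitimate.
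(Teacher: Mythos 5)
Your proposal takes a genuinely different route from the paper and, in its present form, has two concrete gaps that the paper's own argument is designed precisely to sidestep.

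First, the different route. The paper does not pass through relative entropy and the Csisz\'ar--Kullback--Pinsker inequality at all. Instead it follows the framework of \cite[Section 5]{C-C-L}: it introduces the auxiliary quantity $E(t)=A(t)+B(t)$ from Lemma \ref{lem:CLAIM}, proves that $E(t_n)\to 0$ whenever $f(t_n)\to J$ strongly in $L^1$, and then uses the two C--C--L inequalities
$\|Q(f,f)(t)\|_{L^1}\lesssim \sqrt{D(f(t))}$ and
$\|f(t)-J\|_{L^1}\lesssim L_R^{-1}(\sqrt{D(f(t))}+E(t))+R^{-1}$
to \emph{directly} bound $\|f(t)-J\|_{L^1}$ by quantities already known to vanish along suitable sequences. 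This avoids any need to identify weak limits of $f(t)$ or to pass to the limit in the nonlinear collision operator along a trajectory; the only compactness that is required is strong $L^1$ compactness of $\{Q^+(f(t),f(t))\}_t$, which is what the translation-continuity estimate in Lemma \ref{lem:CLAIM} and the regularizing estimate of \cite{MR3880739} deliver.

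Now the gaps. (i) Your argument uses \emph{weak} $L^1$ compactness (Dunford--Pettis) and then asserts that the entropy dissipation $D$ is weakly lower semicontinuous so that $D(\varphi)=0$. But $D$ is not a convex functional of $f$: its integrand is convex in the pair $(f(p)f(q),f(p')f(q'))$, which is a \emph{quadratic} function of $f$, not a linear one, and convexity in a nonlinear functional of $f$ does not give weak $L^1$ lower semicontinuity. One genuinely needs strong $L^1$ convergence before Fatou applies, and in the present setting strong $L^1$ compactness does not come for free from the uniform $L^\infty$ bound and moments --- it is exactly what the gain-term translation estimate \eqref{strong.equaty}--\eqref{strong} in Lemma \ref{lem:CLAIM} supplies. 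Related to this, the inference ``hence every weak subsequential limit of $f(t)$ is $J$, so $f(t)\rightharpoonup J$'' is not justified: your deduction only produced a single subsequence $t_n$ along which $D(f(t_n))\to 0$ and identified the weak limit along \emph{that} subsequence; for a generic sequence $s_m\to\infty$ you have no control of $D(f(s_m))$. (ii) The final step showing $\ell=0$ rests on a limit-trajectory argument in which you recognize a shifted weak limit $\bar f$ as a solution of \eqref{RBE} and upgrade weak to strong convergence via the Duhamel formula. The passage to the limit in the nonlinear operator is asserted, not shown; and in the Duhamel representation
$f(t_n+\tau)=e^{-\int_{t_n}^{t_n+\tau}Lf}f(t_n)+\int_0^\tau e^{-\int\cdots}Q^+(f,f)(t_n+s)\,ds$,
the first summand is not smoothed and does not converge strongly unless one already has strong convergence of $f(t_n)$, which is what you are trying to establish. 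Again, this is precisely where the paper's precompactness of $Q^+$ in $L^1$ (from the translation estimate in Lemma \ref{lem:CLAIM}) and the inequalities from \cite{C-C-L} close the loop. In short, your outline could be made to work, but only after one has first established the strong $L^1$ compactness that the paper proves in Lemma \ref{lem:CLAIM}; with that in hand, the relative-entropy/CKP detour is no longer needed, since the direct C--C--L estimate gives $\|f(t)-J\|_{L^1}\to 0$ immediately.
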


To prove this theorem, we will make use of the general arguments and strategy from \cite[Section 5, page 708]{C-C-L}.  In that paper a general convergence result is presented.  In order to use those arguments we will prove the following lemma.  After that we will explain how to conclude Theorem \ref{convergence.thm.J} from this lemma.

\begin{lemma}\label{lem:CLAIM}
  Suppose there exists a subsequence $\{t_n\}_{n \ge 1}$ such that
  \begin{align*}
  \lim_{n\rightarrow\infty}\|f(t_n)-J\|_{L^1}=0.
  \end{align*}
  Then we have
  \begin{align*}
  \lim_{n\rightarrow\infty}E(t_n)=0,
  \end{align*}
  where $E(t_n)\eqdef A(t_n)+B(t_n)$, and $A$ and $B$ are defined by
  \begin{align*}
  A(t)& \eqdef \|Q^+\big(f(t),f(t)\big)-Q^+(J,J)\|_{L^1},\cr
  B(t)& \eqdef \|f(t)L\big(f(t)-J\big)\|_{L^1}.
  \end{align*}
\end{lemma}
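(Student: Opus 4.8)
The goal is to show that if $f(t_n)\to J$ in $L^1$, then $A(t_n)+B(t_n)\to 0$. The plan is to estimate the two pieces $A$ and $B$ separately by exploiting the bilinearity of $Q^+$ and $L$, together with the uniform bounds already established in the paper. First I would write the telescoping decomposition for the gain term,
\[
Q^+(f,f)-Q^+(J,J)=Q^+(f-J,f)+Q^+(J,f-J),
\]
and similarly note that $B(t)=\|f(t)L(f(t)-J)\|_{L^1}\le \|f(t)\|_{L^\infty}\,\|L(f(t)-J)\|_{L^1}$ after using that $L$ acts only on the second slot and is linear. So everything reduces to controlling $L^1$ norms of $Q^+(h,k)$ and of $L h$ where $h=f(t_n)-J$ is small in $L^1$ and $k$ is one of $f(t_n)$ or $J$, which are uniformly controlled.

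Next I would record the two elementary mapping estimates that make this work. Using the center-of-momentum representation \eqref{Q center of momentum} together with $\mollrV\sigma(g,\theta)\lesssim (\pZ\qZ)^{1/2}$ (from $\mollrV\lesssim 1$, $\sigma\approx g\lesssim(\pZ\qZ)^{1/2}$, which follows from \eqref{assumption}, \eqref{g} and the bound on the M\o ller velocity), and the pre-post change of variables / conservation of $\pZ+\qZ$, one gets the crude bilinear bound
\[
\|Q^+(h,k)\|_{L^1}\lesssim \|h\|_{L^1_1}\|k\|_{L^1_1}.
\]
Likewise $\|Lh\|_{L^1_{-1}}\lesssim \|h\|_{L^1}\,\cdot(\text{something})$; more simply, since $Lh(p)\lesssim \|h\|_{L^1_{1/2}}(\pZ)^{1/2}$ pointwise (this computation already appears in \eqref{lf} of the $H$-theorem section), we have $\|f(t_n)L(f(t_n)-J)\|_{L^1}\lesssim \|f(t_n)\|_{L^\infty}\,\|f(t_n)-J\|_{L^1_{1/2}}$. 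The one subtlety is the weight: the smallness hypothesis is stated in plain $L^1$, whereas the bilinear bounds want weighted $L^1_1$ (or $L^1_{1/2}$) norms of the small factor $h$. To bridge this I would interpolate: $h=f(t_n)-J$ satisfies a uniform weighted bound, e.g. $\sup_n\|f(t_n)-J\|_{L^1_{\varrho}}<\infty$ for some $\varrho>1$ — this follows from $\sup_t\|f(t)\|_{L^1_1}<\infty$ in \eqref{Sol} together with the propagation of higher moments (Theorem \ref{L1E}, available under the hypotheses here since $\varrho>13/2>1$) and the fact that $J$ has all moments finite. Then by a standard moment-interpolation inequality, $\|h\|_{L^1_1}\lesssim \|h\|_{L^1}^{\alpha}\|h\|_{L^1_{\varrho}}^{1-\alpha}\to 0$ along $t_n$.

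With these pieces in hand the conclusion is immediate: for $A(t_n)$,
\[
A(t_n)\lesssim \|f(t_n)-J\|_{L^1_1}\big(\|f(t_n)\|_{L^1_1}+\|J\|_{L^1_1}\big)\longrightarrow 0,
\]
since the first factor $\to 0$ by interpolation and the second is uniformly bounded by \eqref{Sol} and the explicit form of $J$; for $B(t_n)$,
\[
B(t_n)\lesssim \|f(t_n)\|_{L^\infty}\,\|f(t_n)-J\|_{L^1_{1/2}}\longrightarrow 0,
\]
using $\sup_t\|f(t)\|_{L^\infty}\le C_{f_0}$ from Theorem \ref{main} and again interpolation for the small factor. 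Adding, $E(t_n)=A(t_n)+B(t_n)\to 0$. The step I expect to be the main obstacle — really the only non-routine point — is matching the unweighted smallness in the hypothesis with the weighted norms demanded by the collision estimates; this is handled cleanly by the moment-interpolation argument above, which crucially relies on the uniform-in-time $L^\infty$ bound (Theorem \ref{main}) and the moment propagation (Theorem \ref{L1E}) that the rest of the paper has already secured. Everything else is bookkeeping with the bilinear structure of $Q^+$ and $L$.
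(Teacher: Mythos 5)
Your argument is correct in spirit and takes a genuinely different, more elementary route than the paper's. The paper handles $A(t_n)$ by establishing strong $L^1$-precompactness of $\{Q^+(f(t),f(t))\}_{t\geq 0}$ via an $L^1$-equicontinuity-under-translation estimate that leans on the Fourier regularity result for $Q^+$ from \cite{MR3880739}, combines this with the weak convergence $Q^+(f(t_n),f(t_n)) \rightharpoonup Q^+(J,J)$ to upgrade to strong $L^1$ convergence, and for $B(t_n)$ performs a near/far split in $|p-q|\lessgtr R$ with the far tail controlled by moments and a factor $1/R$. You instead exploit bilinearity directly, $Q^+(f,f)-Q^+(J,J)=Q^+(f-J,f)+Q^+(J,f-J)$, invoke the crude bound $\|Q^+(h,k)\|_{L^1}\lesssim \|h\|_{L^1_{1/2}}\|k\|_{L^1_{1/2}}$ via the pre--post change of variables (an estimate the paper itself records at the end of its proof), and then bridge the gap between the unweighted $L^1$ smallness hypothesis and the weighted norms by moment interpolation, using the propagated higher $L^1_k$ moments from Theorem~\ref{L1E} (which are available here since $\exm>13/2$ gives $\|f_0\|_{L^1_k}<\infty$ for some $k>1$). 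This avoids the Fourier compactness machinery entirely and is arguably the cleaner argument when, as here, the hypothesis already grants \emph{strong} $L^1$ convergence $f(t_n)\to J$ rather than only weak convergence. One correction you should make: in the $B$ estimate, your opening inequality $\|fLh\|_{L^1}\le \|f\|_{L^\infty}\|Lh\|_{L^1}$ is vacuous, since $|Lh(p)|\approx (\pZ)^{1/2}\|h\|_{L^1_{1/2}}$ grows and so $\|Lh\|_{L^1}=\infty$; after you invoke the pointwise bound $|Lh(p)|\lesssim \|h\|_{L^1_{1/2}}(\pZ)^{1/2}$ the factor that actually comes out is $\|f(t_n)\|_{L^1_{1/2}}$, not $\|f(t_n)\|_{L^\infty}$. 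That factor is still uniformly bounded by \eqref{Sol}, so your conclusion is unchanged, but the intermediate display needs to be rewritten.
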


\begin{proof}[Proof of Lemma \ref{lem:CLAIM}]
We will start by proving that $\displaystyle\lim_{n\rightarrow\infty}A(t_n)=0$. To this end we initially show that $Q^+$ is Lipschitz continuous with respect to translation of $p$ in the $L^1$ space as follows. To do this, for some sufficiently large $R>0$, we consider
\begin{align*}
&\|Q^+(f,f)(\cdot+h,t)-Q^+(f,f)(\cdot,t)\|_{L^1}\cr
&=\int_{\rth}|Q^+(f,f)(p+h,t)-Q^+(f,f)(p,t)|dp\cr
&=\int_{|p|\leq R}|Q^+(f,f)(p+h,t)-Q^+(f,f)(p,t)|dp\cr
&\qquad+\int_{|p|> R}|Q^+(f,f)(p+h,t)-Q^+(f,f)(p,t)|dp\cr
&=I+II,
\end{align*}
By the Cauchy-Schwarz inequality, the Parseval identity and the regularizing estimate of $Q^+$ in \cite[Theorem 1.1, page 164]{MR3880739}, we have
\begin{align*}
I&\leq C_R\left(\int_{\mathbb{R}^3}|1-e^{i\xi\cdot h}|^2\big|\widehat{Q^+(f,f)}\big|^2d\xi\right)^{1/2}\cr
&\leq C_R |h|\left(\int_{\mathbb{R}^3}|\xi|^2\big|\widehat{Q^+(f,f)}\big|^2d\xi\right)^{1/2}\cr
&\leq C_R |h|\|f\|_{L^1}\|f\|_{L^2},
\end{align*}
where we observe that $||f||_{L^2} \lesssim ||f||_{L^\infty_{\exm}} \lesssim 1$ for some $\exm > 3/2$. 

On the other hand, thanks to $\mollrV\lesssim 1$ from \eqref{mollerV} and
$$
g(p^\mu, q^\mu) = g(p^{\prime\mu}, q^{\prime\mu}) \lesssim (\pZp \qZp )^{1/2}
$$
as in \eqref{g}, without loss of generality for $|h| \le 1$, we get
\begin{align*}
II&\leq \int_{|p|> R}|Q^+(f,f)(p+h,t)|dp+\int_{|p|> R}|Q^+(f,f)(p,t)|dp\cr
&= \left(\int_{|p-h|> R}+\int_{|p|>R}\right)dp\int_{\rth}dq \int_{\mathbb{S}^2}d\omega\  \mollrV \sigma(g,\omega) f(p')f(q')\cr
&\lesssim  \left(\int_{|p-h|> R}+\int_{|p|>R}\right)dp\int_{\rth}dq \int_{\mathbb{S}^2}d\omega\  gf(p')f(q')\cr
&\lesssim \left(\int_{|p-h|> R}+\int_{|p|>R}\right)dp\int_{\rth}dq \int_{\mathbb{S}^2}d\omega\  (\pZp \qZp )^{\frac{1}{2}} f(p')f(q').
\end{align*}
We then employ the propagation of polynomial decay in Theorem \ref{poly.upper.bound} to find
\begin{align*}
II&\lesssim \left(\int_{|p-h|> R}+\int_{|p|>R}\right)dp\int_{\rth}dq \int_{\mathbb{S}^2}d\omega\  (\pZp \qZp )^{-\exm+\frac{1}{2}} \|f\|^2_{L^{\infty}_{\exm}}
\end{align*}
Using the computation in (\ref{similar}), we observe that
$
4(\pZp \qZp )^2\geq \pZ \qZ.
$
Then since $\exm > \frac{13}{2}$
we can bound $II$ as
\begin{align*}
II&\lesssim\left(\int_{|p-h|> R}+\int_{|p|>R}\right)dp\int_{\rth}dq \int_{\mathbb{S}^2}d\omega\  (\pZ \qZ )^{-\frac{\exm}{2}+\frac{1}{4}} \|f\|^2_{L^{\infty}_{\exm}}\cr
&\lesssim \left(\int_{|p-h|> R}+\int_{|p|>R}\right)dp \ (\pZ )^{-\frac{\exm}{2}+\frac{1}{4}} \|f\|^2_{L^{\infty}_{\exm}}\cr
&\lesssim
\|f\|^2_{L^{\infty}_{\exm}} R^{\frac{1}{4}+3}\left(\frac{1}{1+|R-h|^{\frac{\exm}{2}}} + \frac{1}{1+|R|^{\frac{\exm}{2}}} \right).
\end{align*}
Therefore, we conclude that
\begin{align}\label{strong.equaty}
\|Q^+(f,f)(\cdot+h,t)-Q^+(f,f)(\cdot,t)\|_{L^1}
= \Lambda_I(h) + \Lambda_{II}(h,R).
\end{align}
Here $\Lambda_I(h)$ and $\Lambda_{II}(h,R)$ are defined as the upper bounds of the respective $I$ and $II$ estimates given above.  Then we first send $|h| \to 0$ which implies $\Lambda_I(h) \to 0$.  Second we notice that $\Lambda_{II}(0,R)$ is arbitrarily small for any large and fixed $R>0$ and further $\Lambda_{II}(0,R)\to 0$ as $R \to \infty$.  Therefore we obtain
\begin{align}\label{strong}
\lim_{h\rightarrow 0}\|Q^+(f,f)(\cdot+h,t)-Q^+(f,f)(\cdot,t)\|_{L^1}=0.
\end{align}
This, combined with the boundedness of $\|Q^+(f,f)\|_{L^1}$ gives the strong compactness of $Q^+$ in $L^1$ using standard arguments \cite{MR1284432}.

Here, the boundedness of $||Q^+(f,f)||_{L^1}$ follows since we observe that
\begin{multline*}
\|Q^+(f,f)\|_{L^1}=\left|\int_\rth dp  \int_\rth dq\int_{\mathbb{S}^2}d\omega\  \mollrV gf(p')f(q')\right|\\
=\left|\int_\rth dp \int_\rth dq\int_{\mathbb{S}^2}d\omega\  \frac{g^2\sqrt{s}}{p^0q^0} f(p')f(q')\right|\\
=\left|\int_\rth dp' \int_\rth dq'\int_{\mathbb{S}^2}d\omega\  \frac{g^2\sqrt{s}}{p'^0q'^0} f(p')f(q')\right|\\
\lesssim \int_\rth dp' \int_\rth dq'\int_{\mathbb{S}^2}d\omega\  \frac{g^2\sqrt{s}}{p'^0q'^0} |f(p')||f(q')|\\
\lesssim \int_\rth dp' \int_\rth dq'\int_{\mathbb{S}^2}d\omega\  (p'^0q'^0)^{\frac{1}{2}}|f(p')||f(q')|\lesssim ||f||_{L^1_{\frac{1}{2}}}^2,
\end{multline*}
where the third identity is by the pre-post change of variables $(p,q)\mapsto (p',q')$ (see \cite{MR2765751} for an explanation of this change of variables in this coordinate system), and the last inequality is by $g\lesssim \sqrt{p'^0q'^0}$ and $s=s(p^\mu,q^\mu)=s(p'^\mu,q'^\mu)\lesssim p'^0q'^0.$

On the other hand,  it can be readily verified from the weak convergence assumption of $f(t_n)$ to $J$ and the regularization of the gain term that
\begin{align}\label{weak}
\lim_{n\rightarrow\infty}
\int_{\mathbb{R}^3}\left\{Q^+\big(f(t_n),f(t_n)\big)-Q^+(J,J)\right\}\phi(p)dp=0,
\end{align}
for any $L^{\infty}(\mathbb{R}^3)$ function $\phi$.
Therefore, we conclude from (\ref{strong}) and (\ref{weak}) that
\begin{align*}
\lim_{n\rightarrow\infty}\|Q^+\big(f(t_n),f(t_n)\big)-Q^+(J,J)\|_{L^1}=0.
\end{align*}
This establishes that $\displaystyle\lim_{n\rightarrow\infty}A(t_n)=0$.


We now prove $\lim_{n\rightarrow\infty}B(t_n)=0.$ For this, we divide the integral as
\begin{align*}
\|f(t_n)L(f(t_n)-J)\|_{L^1}
\leq & \int_{\mathbb{R}^3}f(p,t_n)\bigg|\int_{|p-q|\leq R}\mollrV g\left\{ f(q,t_n)-J \right\}dq\bigg|dp\cr
&+\int_{\mathbb{R}^3}f(p,t_n)\bigg|\int_{|p-q|>R}\mollrV g \left\{f(q,t_n)-J\right\} dq\bigg|dp\cr
&= I(t_n)+II(t_n).
\end{align*}
Now, $I$ vanishes by the compactness of $f(t_n)$:
\begin{align*}
\lim_{n\rightarrow\infty}I(t_n)=0.
\end{align*}
Further $II$ can be controlled as
\begin{align*}
II(t_n)&\leq C\int_{\mathbb{R}^3}f(p,t_n)\int_{|p-q|>R}\mollrV g \big(f(q,t_n)+J\big) dpdq\cr
&\leq\frac{C'}{R}\int_{\mathbb{R}^3}f(p,t_n)\int_{\mathbb{R}^3}|p-q|^2\big(f(q,t_n)+J\big) dpdq\cr
&\leq \frac{C'}{R}\left(\| f\|_{L^1_2}+\| f\|_{L^1_2}^2 \right)
\leq \frac{C\left(\| f_0\|_{L^1_2} \right)}{R}.
\end{align*}
where we used $\mollrV\lesssim 1$ from \eqref{mollerV} and $g\leq |p-q|$ from \eqref{gINEQ}, and the propagation of $L^1$ moments from \cite{MR3166961} that we restated in Theorem \ref{L1E}.  We conclude that $II(t_n) \to 0$  as $R\to \infty$ uniformly in $t_n$.  This completes the proof.
\end{proof}

Now we will use Lemma \ref{lem:CLAIM} to give a proof of Theorem \ref{convergence.thm.J}.

\begin{proof}[Proof of Theorem \ref{convergence.thm.J}]
We will not explain in precise detail why Lemma \ref{lem:CLAIM} implies Theorem \ref{convergence.thm.J}.  We can follow a similar argument as in the proof for the part (II) of Theorem 4 in \cite[Section 5, page 708]{C-C-L}.

We first observe that \eqref{Sol}, Lemma \ref{lowerL}, and Proposition \ref{upperG} together imply that
\begin{equation}
\label{eq.5.7}\lim_{|t-s|\rightarrow 0}||f(t)-f(s)||_{L^1}=0, \quad \text{and}\quad \sup_{t\geq 0}||f(t)||_{L^1_1}<\infty.
\end{equation}
To see the above we additionally use that
\begin{equation}\notag
  ||f(t)-f(s)||_{L^1}=\left|\int_s^t d\tau \int_\rth dp \left(Q^+(f,f)-Q^-(f,f)\right)(p,\tau)\right|\lesssim |t-s|.
\end{equation}
The above follows from the boundedness of $Q(f,f)$ in $L^1(\mathbb{R}^3)$.

We now choose and fix any sequence $\{t_n\}_{n\geq 1}\subset [0,\infty)$ satisfying $t_n\rightarrow \infty$ as $n\rightarrow \infty$ and $\sup_{n\geq 1}||f(t_n)||_{L^1_1}<\infty.$ Then there exists a sequence $\{\bar{t}_n\}_{n\geq 1}\subset [t_n,t_n+\delta_n]$ such that $D(f(\bar{t}_n))\leq \delta_n \rightarrow 0$ as $n\rightarrow \infty$ where
\begin{equation}\notag
  \delta_n\eqdef \left(\int_{t_n}^\infty D(f(t))dt+\frac{1}{n}\right)^{1/2},
\end{equation}
since then
$$
\frac{1}{\delta_n}\int_{t_n}^{t_n+\delta_n}D(f(t))dt<\delta_n.
$$
 Therefore, by the $L^1(\mathbb{R}^3)$-compactness of $\{f(\cdot,t)\}_{t\geq 0}$, we can always find a subsequence of $\{t_n,\bar{t}_n\}_{n\geq 1}$ (still denoted by the same notation) and functions $0\leq f_\infty,\bar{f}_\infty\in L^1(\rth)$ such that $f(t_n)\rightarrow f_\infty$ and $f(\bar{t}_n)\rightarrow \bar{f}_\infty$ as $n\rightarrow \infty$ in $L^1(\rth)$. Then we further conclude that $f_\infty=\bar{f}_\infty=J$, since
 \begin{equation}\notag
   0\le D(\bar{f}_\infty)\leq \lim_{n\rightarrow \infty} D(f(\bar{t}_n))=0.
 \end{equation}
 And $D(f)=0$ implies that $f=J$ as in \eqref{E:Maxwelliandef}.  And similarly for $\{t_n\}$ and $f_\infty$.  Therefore, we observe that we have two subsequences $\{t_n\}$ and $\{\bar{t}_n\}$ such that
 \begin{align*}
\lim_{n\rightarrow\infty}\|f(\bar{t}_n)-J\|_{L^1}=\lim_{n\rightarrow\infty}\|f(t_n)-J\|_{L^1}=0.
\end{align*}
Now, using Lemma \ref{lem:CLAIM}, then we have
\begin{equation}
\label{eq.5.18.a}\lim_{n\rightarrow \infty} E(t_n)=\lim_{n\rightarrow \infty} E(\bar{t}_n)=0.
\end{equation}
Note that, using the same proof as \cite[Eq. (5.15) on page 710]{C-C-L} we have
\begin{equation}\notag
  \| Q(f,f)(t)\|_{L^1(\mathbb{R}^3)} \lesssim \sqrt{D(f(t))}.
\end{equation}
Then again  using the same proof as \cite[Eq. (5.16) on page 710]{C-C-L} we have
\begin{equation}\notag
  \| f(t) - J \|_{L^1(\mathbb{R}^3)} \lesssim \frac{1}{L_R}\left(\sqrt{D(f(t))} + E(t) \right) + \frac{1}{R}.
\end{equation}
where $R>0$ is large and $L_R \eqdef \min_{|p| \le R} L(J)(p)>0$ from Lemma \ref{lowerL}.  Further recall from the proof of Lemma \ref{lem:CLAIM} that we also have
\begin{equation}\label{eq.5.18.b}
\lim_{n\rightarrow \infty} D(f(t_n))= \lim_{n\rightarrow \infty} D(f(\bar{t}_n))=0.
\end{equation}
These are the basic estimates that we will use to conclude the proof.

Now choose the original sequence $\{t_n\}$ at the start of this proof to satisfy
\begin{equation}\notag
  \limsup_{t\to\infty} \| f(t) - J \|_{L^1} = \lim_{t\to\infty} \| f(t_n) - J \|_{L^1}
\end{equation}
Then we choose the subsequences as explained previously in this proof.    Therefore, we conclude that \eqref{eq.5.7}, \eqref{eq.5.18.a} and \eqref{eq.5.18.b} together imply that
\begin{equation}\notag
  \limsup_{t\rightarrow\infty}||f(t)-J||_{L^1}=0.
\end{equation}
This completes the proof.
\end{proof}


\noindent{\bf Acknowledgements} J. W. Jang was supported by
the Korean IBS project IBS-R003-D1.  R. M. Strain was partially supported by the NSF grants DMS-1764177 and DMS-1500916 of the USA.   S.-B. Yun is supported by Samsung Science and Technology Foundation under Project Number SSTF-BA1801-02.     

\providecommand{\bysame}{\leavevmode\hbox to3em{\hrulefill}\thinspace}
\providecommand{\href}[2]{#2}

\end{document}